    \newcommand{\lab}[1]{\label{#1}}              
     \newcommand{\thlab}[1]{\thlabel{#1}} 
\newcommand{\remove}[1]{}
\newcommand{\sign}{\ensuremath{\mathrm{sign}}} 
\newcommand{\bee}{\begin{equation}}
\newcommand{\ee}{\end{equation}}
\newcommand\eqn[1]{(\ref{#1})}
\newcommand{\bel}[1]{\bee\lab{#1}}
\newcommand\se{\subseteq}
\newcommand\sm{\setminus}
\newcommand{\bad}{\ensuremath {{\bf bad}}}
\newcommand{\Bad}{\ensuremath {B}}
\newcommand{\savg}{s}
\newcommand{\tavg}{t}
\newcommand{\DeltaA}{\Delta_S}
\newcommand{\DeltaB}{\Delta_T}
\newcommand{\hatDeltaA}{\hat\Delta_S}
\newcommand{\hatDeltaB}{\hat\Delta_T}
\newcommand{\dvA}{\sv}
\newcommand{\dvB}{\tv}
\newcommand{\dcA}{s}
\newcommand{\dcB}{t}
\newcommand{\corr}[1]{\ensuremath{\correct (#1)}} 	% the correction factor H 
\newcommand{\degspread}{\varphi}
\def\Gnp{{\cal G}(n,p)}
\def\eps{\varepsilon}
\newcommand{\Gnm}{\mathcal{G}(n,m)}
\def\Num{\mathcal{N}}
\def\Yc{{\cal Y}}
\def\yv{{\bf y}}
\def\Pgr{{P^*}}
\def\Ygr{{Y^*}}
\def\apx{{H}}
\def\correct{{\tilde H}}
\def\cA{{\cal A}}			% set of allowable edges
\newcommand{\cB}{\mathcal{B}}
\def\Cc{{\cal C}}
\def\cC{{\cal C}}
\def\cD{{\cal D}}		% this is for a degree sequence (e.g. of G(n,m))
\newcommand{\G}{\mathcal{G}}
\newcommand{\cG}{\mathcal{G}}
\def\cN{{\cal N}}
\def\oA{{\vec{\cA}}}		% set of ordered pairs of allowable edges
\def\Pc{{\cal P}}
\def\Rc{{\cal R}}
\newcommand{\cS}{\mathcal{S}}
\def\D{{\mathfrak{D}}}	% sets of degree sequences
\def\R{\mathbbm{R}}		% real numbers
\def\W{{\mathfrak{W}}} 	% this is the vertex set in the auxiliary graph in Lemma 2.1
\def\Z{\mathbbm{Z}_{\ge 0}}	% non-negative integers
\def\pr{{\bf P}}			% probability
\def\Var{{\bf Var}}
\def\ex{{\bf E}}
\newcommand{\Bin}{\ensuremath{\mathrm{Bin}}}
\def\dv{{\bf d}}
\def\sv{{\bf s}}
\def\tv{{\bf t}}
\def\dw{{\bf d}'}
\def\pv{{\bf p}}
\def\rv{{\bf r}}
\def\ve{{\bf e}}
\def\eb{{\ve_b}}
\def\ea{{\ve_a}}
\def\ev{{\ve_v}}
\def\ew{{\ve_w}}
\newcommand{\bi}{\mathrm{bi}}
\newcommand{\di}{\mathrm{di}}
\newcommand{\mate}{\mathrm{mate}}
\renewcommand{\mate}[1]{\ensuremath{#1'}}
\DeclareRobustCommand\widecheck[1]{{\mathpalette\@widecheck{#1}}}
\def\@widecheck#1#2{%
    \setbox\z@\hbox{\m@th$#1#2$}%
    \setbox\tw@\hbox{\m@th$#1%
       \widehat{%
          \vrule\@width\z@\@height\ht\z@
          \vrule\@height\z@\@width\wd\z@}$}%
    \dp\tw@-\ht\z@
    \@tempdima\ht\z@ \advance\@tempdima2\ht\tw@ \divide\@tempdima\thr@@
    \setbox\tw@\hbox{%
       \raise\@tempdima\hbox{\scalebox{1}[-1]{\lower\@tempdima\box
\tw@}}}%
    {\ooalign{\box\tw@ \cr \box\z@}}}
\newtheorem{thm}{Theorem}[section]
\newtheorem{lemma}[thm]{Lemma}
\newtheorem{proposition}[thm]{Proposition}
\newtheorem{claim}[thm]{Claim}
\newtheorem{remark}[thm]{Remark}
\newtheorem{definition}[thm]{Definition}
\date{}
\begin{document}
\title{Asymptotic enumeration of digraphs and bipartite graphs  by degree sequence\thanks{Research supported by ARC Discovery Project DP180103684.}}

 \author{Anita Liebenau\thanks{Supported by an ARC DECRA Fellowship grant DE170100789.}\\
 {\small School of Mathematics and Statistics}\\{\small UNSW Sydney NSW 2052}\\
 {\small Australia} \\
 {\small  \tt{a.liebenau@unsw.edu.au}}
 \and Nick Wormald\thanks{Supported by an ARC Australian Laureate Fellowship grant FL120100125.}
 \\
 {\small School of Mathematical Sciences}\\{\small Monash University VIC 3800}\\
 {\small Australia} \\
{\small  \tt{ nick.wormald@monash.edu}}
 }

 \maketitle

 \begin{abstract}
We provide asymptotic formulae for the numbers of bipartite graphs  with given degree sequence, and of loopless digraphs with given in- and out-degree sequences, for a wide range of parameters. Our results cover medium range densities and close the gaps between the results known for the sparse and dense ranges. In the case of bipartite graphs, these results were proved by Greenhill, McKay and Wang in 2006 and by Canfield, Greenhill and McKay in 2008, respectively. Our method also essentially covers the sparse range, for which much less was known in the case of loopless digraphs.  For the range of densities which our results cover, they imply that the degree sequence of a random bipartite graph with $m$ edges is accurately modelled by a sequence of independent binomial random variables, conditional upon the sum of variables in each part being equal to $m$. A similar model also holds for loopless digraphs.   

%\smallskip
%\noindent
%{\bf  Keywords:} asymptotic enumeration, degree sequence, random graphs, bipartite graphs, digraphs.
 \end{abstract}

%%%%%%%%%%%%%%%%%%%%%%%%%%%%%%

%		INTRO 

%%%%%%%%%%%%%%%%%%%%%%%%%%%%%%
\section{Introduction} 
Enumeration of discrete structures with local constraints has attracted the interest of many researchers and has applications in various areas such as coding theory, statistics and neurostatistical analysis. Exact formulae are often hard to derive or infeasible to compute. Asymptotic formulae are therefore sought and often provide sufficient information for the aforementioned applications. In this paper we find such formulae for bipartite graphs  with given degree sequence, or loopless digraphs with given in- and out-degree sequences. Our results imply that the degree sequence of a random digraph or bipartite graph with $m$ edges is close to a sequence of independent binomial random variables, conditional upon the sum of degrees in each part being equal to $m$. 

We frame all our arguments in terms of bipartite graphs: as noted below, digraphs are equivalent to ``balanced" bipartite graphs. Thus, if loops are not forbidden, the digraph enumeration problem is the same as the bipartite one.  
The loopless case for digraphs is equivalent to bipartite graphs with a forbidden perfect matching. 
Our results on counting bipartite graphs with a given degree sequence imply equivalent results on counting $0$-$1$ matrices with given row and column sums. Similarly, counting (loopless) digraphs is equivalent to counting square $0$-$1$ matrices with given row and column sums where the entries on the diagonal are required to be 0.

Our results are obtained via the method of degree switchings and contraction mappings recently introduced by the authors in~\cite{lw2018} to count the number of  ``nearly" regular  graphs of a given degree sequence for  medium-range densities, and a wider range of degree sequences for low densities.  The basic structure of the argument is very similar in the present case, but it needs significant modifications to account for the fact that we are dealing with bipartite graphs and certain edges are not allowed. 
\subsection{Enumeration results}
 The formulae in~\cite{lw2018} are stated in terms of a relationship between the degree sequence of the Erd\H{o}s-R\'enyi random graph and a sequence of  independent binomial random variables. We shall do the same here for appropriate bipartite random graphs and suitable independent binomials. 
We first introduce appropriate graph theoretic notation. Let $\ell, n$ be integers and let $S=[\ell]$ and $T=[n+\ell]\setminus [\ell]$. We use $S$ and $T$ as the two parts of the vertex set of a bipartite graph $G$, i.e.\  a graph $G$ with bipartition $(S, T)$. Such a graph is said to   have degree sequence $(\sv,\tv)$  if vertex $a$ has degree $s_a$ for all $a\in S$, and $v$ has degree $t_v$ for all $v\in T$. (Our convention is to denote elements of   $S$   by $\{a,b,\ldots\}$ and elements of $T$  by $\{v,w,\ldots\}$.)  We let $\cD(G)$ denote the degree sequence of $G$. 
When $\ell =n$, we use the fact that a digraph on $n$ vertices with out-degree sequence $\sv$ and in-degree sequence $\tv$ corresponds to a bipartite graph with degree sequence  $(\sv,\tv)$, the equivalence obtained by directing all edges from $S$ to $T$. For use in the digraph case, if $a\in S$ we define   $\mate{a}=a+\ell\in T$, and for $v\in T$ we define   $\mate{v}=v-\ell\in S$. The digraph contains a loop if and only if the bipartite graph has an edge joining $a$ to $a'$.

 The following probability spaces play an important role in this paper. 
Let $\G(\ell,n,m)$ denote the bipartite graph chosen uniformly at random among all bipartite graphs with bipartition $(S,T)$ and with $m$ edges. 
In the case when $\ell=n$, conditioning on the event that none of those $m$ edges is of the form $aa'$ yields a model of random directed graphs  without loops which we call  $\vec \G(n,m)$. 
We define $\cD(\G(\ell,n,m))$ and $\cD(\vec \G(n,m))$ to be the corresponding probability spaces of degree sequences of $\G(\ell,n,m)$ or of $\vec\G(n,m)$, respectively.
Let $\cB_p(\ell,n)$ be the probability space of vectors of length $\ell+n$ where the first $\ell$ elements are distributed as $\Bin(n,p)$ and the next $n$ are distributed as $\Bin(\ell ,p)$. Furthermore, let $\cB_m(\ell,n)$ be the restriction of $\cB_p(\ell,n) $ to the event $  \Sigma_1=\Sigma_2= m $, where $\Sigma_1$ is the sum of the first $\ell$ elements of the vector, and $\Sigma_2$ the sum of the other $n$ elements. 
Similarly, define $\vec\cB_p(n)$ to be the probability space of random vectors of length $2n$, every component being independently distributed as $\Bin(n-1,p)$. Finally, let $\vec\cB_m( n)$ be the restriction of $\vec\cB_p( n)$ to the event $\Sigma_1=\Sigma_2= m $, where $\Sigma_i$ is defined as above with $\ell=n$. 
Note that if  $\sum s_a = \sum t_v=m$, then  
\begin{align}\lab{probbin}
\pr_{\cB_m(\ell,n)}(\sv,  \tv)  &=  {\ell n \choose m }^{-2} \prod_{a\in S} { n \choose s_a }\prod_{v\in T} { \ell \choose t_v}  \mbox{ and}\nonumber\\
 \pr_{ \vec\cB_m( n)} (\sv,  \tv)  &=  { n(n-1) \choose m}^{-2} \prod_{a\in S} { n-1 \choose s_a }\prod_{v\in T} { n-1 \choose t_v},   
\end{align}
which we note are both independent of $p$. 

Our main result for degree sequences of ``medium density" states essentially that for certain sequences $\dv$, the probability $\Pr_{\cD(\G)}(\dv)$ is asymptotically equal to $\Pr_{\cB_m}(\dv)\correct(\dv)$, where $\G =\G(\ell,n,m)$ and $\cB_m =\cB_m(\ell,n)$ in the bipartite case, $\G =\vec\G(n,m)$ and $\cB_m =\vec\cB_m(n)$ in the digraph case, and where $\correct$ is a correction factor which we define next. For asymptotics in this paper, we take $n\to\infty$; the restrictions  on $\ell$ will also  ensure that $\ell\to\infty$. 

With $S$ and $T$ as above, let $\dv$ be a sequence of length $N=\ell+n$. We set $M_1=M_1(\dv) =  \sum_{i=1}^N d_i$ and use $\sv=\sv(\dv)$ and $\tv=\tv(\dv)$ to denote the vectors consisting of the first $\ell$, and  of the last $n$, entries of $\dv$ respectively. Thus, $\dv=(\sv,\tv)$. We also let $\savg=\savg(\dv)$ and $\tavg=\tavg(\dv)$ denote average of the components of $\sv$, and of $\tv$, respectively, that is 
$\savg = \frac{1}{\ell}\sum_{a\in S} s_a$ and $\tavg = \frac{1}{n}\sum_{v\in T} t_v$.
 Then we set 
$$\sigma^2(\sv)= \frac{1}{\ell}\sum_{a\in S} (s_a-\savg)^2, \quad \sigma^2(\tv)= \frac{1}{n}\sum_{v\in T} (t_v-\tavg)^2, $$
 and, in the digraph case, 
$$\sigma(\sv,\tv)=\frac{1}{n}\sum_{a\in S} (s_a-\savg)(t_{a'}-\tavg).$$  
We unify our analysis  of the two cases, bipartite graphs and digraphs, by introducing the indicator variable $\delta^{\di}$ which is $1$ in the digraph case (in which case $\ell = n$ is assumed) and $0$ in the bipartite case (in which case terms containing $\delta^{\di}$ as a factor may be undefined). This significantly simplifies notation  and permits us to emphasise the similarities between the two cases. 
 Define $\mu=\mu(\dv)=M_1(\dv)/(2n(\ell-\delta^{\di}))$. (This will denote the {\em relative edge density} of a bipartite graph or a digraph with degree sequence $\dv$.)
We then set 
\begin{align}\lab{corrH}
\corr{\dv} = \exp\left(-\frac12 \left(1-\frac{\sigma^2(\sv)}{s(1-\mu)}\right)
                     \left(1-\frac{\sigma^2(\tv)}{t(1-\mu)}\right) 
                     -\frac{\delta^{\di}\sigma(\sv,\tv)}{s(1-\mu)}\right)
\end{align}
for a sequence $\dv$ of length $\ell+n$, where $\mu = \mu(\dv)$, $\sv=\sv(\dv)$, $\tv=\tv(\dv)$. 
We can now state our main result.
\begin{thm}\thlab{t:mainbip}
For  a sufficiently small constant  $\mu_0$, the following holds. Let $1/2<\degspread<3/5$. Let $\ell$, $n$ and $m$ be integers that satisfy 
$$
m/(n\ell)<\mu_0, \quad   (\ell+n)^{5-5\degspread} 
  =o(\ell nm^{3-5\degspread}), 
$$
  and for all fixed $K>0$, $ \ell \log^K n+ n\log^K\ell  =o(m)$.  
Let $\D$ be the set of sequences $\dv=(\sv,\tv)$ with $\sv$ and $\tv$ of lengths $ \ell$ and $n$ respectively, satisfying $M_1(\sv)=M_1(\tv)=m$,  $|s_a-s|\le s^{\degspread}$ and $|t_v-t|\le t^{\degspread}$ for all $a\in S$ and all $v\in T$, where $\savg=m/\ell$ and $\tavg=m/n$.
Either set $\G=\G(\ell,n,m)$ and $\cB_m= \cB_m(\ell,n)$ (the bipartite case), or  set $\G=\vec \G(n,m)$ and $\cB_m= \vec\cB_m(n)$ and restrict to $\ell=n$ (the digraph case). 
 Then uniformly for all $\dv\in\D$,  \bel{enumFormula}
 \pr_{\cD(\G)} (\dv) =  \pr_{\cB_m}(\dv)\corr{\dv}\left(1+O\left(\frac{\log^2\ell}{\sqrt{\ell}}+\frac{\log^2n}{\sqrt{n}}+(\min\{s,t\})^{5\degspread-5}m^2/\ell n \right)\right).
 \ee
\end{thm}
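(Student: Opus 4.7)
The plan is to adapt the switching-and-contraction-mapping framework developed in~\cite{lw2018} for near-regular graphs to the bipartite (and, by equivalence, digraph) setting. Throughout we frame everything in terms of bipartite graphs on parts $S,T$ of sizes $\ell,n$, handling the digraph case by declaring edges of the form $aa'$ to be forbidden; this is the only place where $\delta^{\di}$ directly enters, but it propagates through all of the switching counts.

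The main device is to compare the true probability $\pr_{\cD(\G)}(\dv)$ with the reference probability $\pr_{\cB_m}(\dv)$ not directly, but via ratios between neighbouring degree sequences. For sequences $\dv,\dv'$ that differ by shifting one unit of degree between two vertices on the same side, say $s_a\mapsto s_a+1,\ s_b\mapsto s_b-1$, together with a compensating shift on the $T$-side to preserve $M_1(\tv)=m$, both $\pr_{\cD(\G)}$ and $\pr_{\cB_m}$ obey simple multiplicative relations. For the binomial side the ratio is immediate from~\eqn{probbin}. For the graph side, one counts an edge-shift switching that removes an edge $av$ and replaces it with $bv$, together with the analogous move on the $T$-side, and evaluates the forward/backward switching counts. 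Accounting for forbidden candidate edges (already-present edges, or edges of the form $aa'$ in the digraph case) yields the ratio $|\cD^{-1}(\dv')|/|\cD^{-1}(\dv)|$ to high accuracy, provided one can estimate the number of ``bad'' common neighbours and forbidden positions.

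Once these precise ratio estimates are in place, write $\pr_{\cD(\G)}(\dv) = \pr_{\cB_m}(\dv)\,h(\dv)$ and identify $h$ as follows. The ratio estimates force $h$ to satisfy a recursion of the form $h(\dv')/h(\dv) = 1 + \Delta(\dv,\dv'; h)$, where $\Delta$ depends on $\dv,\dv'$ and on certain weighted averages of $h$. Defining an operator $\Phi$ that maps any candidate $h$ to the correction obtained by iterating this recursion from an anchor sequence (such as an exactly regular one), a contraction argument on a suitable space of slowly-varying functions over $\D$ identifies the unique fixed point. It remains to verify that $\corr{\dv}$ lies within the claimed error of this fixed point; this reduces to a second-order Taylor expansion of the switching ratios, which reproduces the factors $1-\sigma^2(\sv)/(\savg(1-\mu))$ and $1-\sigma^2(\tv)/(\tavg(1-\mu))$, and in the digraph case the correlation correction $-\delta^{\di}\sigma(\sv,\tv)/(\savg(1-\mu))$.

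The main obstacles I anticipate are: (i) in the digraph case, tracking how many edges involved in a switching lie on the forbidden matching $\{aa' : a\in S\}$, which is what generates the $\sigma(\sv,\tv)$ term in $\corr{\dv}$ and requires a joint analysis across both sides; (ii) carrying the switching-count expansion to second order uniformly across the whole range $\mu<\mu_0$ down to $m$ just above $\ell\log^K n + n\log^K\ell$, where the graph is sparse and higher-order corrections threaten to exceed the main term, so one must carefully separate ``typical'' from ``atypical'' configurations and bound the latter; and (iii) ensuring that the contraction mapping acts on a function class large enough to contain every $\dv\in\D$ (so that the spread bound $|s_a-\savg|\le\savg^{\degspread}$ determines a manageable switching diameter), yet small enough that $\Phi$ is genuinely contractive, which is precisely where the exponent restriction $1/2<\degspread<3/5$ and the combined error term $(\min\{\savg,\tavg\})^{5\degspread-5}m^2/\ell n$ should enter.
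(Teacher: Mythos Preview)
Your high-level plan---adapt the contraction framework of~\cite{lw2018}, compare ratios of probabilities at neighbouring degree sequences, and identify the correction factor---is the right direction, and matches the paper's strategy. But the architecture you describe diverges from the paper's in a way that matters.

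First, the ratio comparison: the paper compares $\dv-\ea$ with $\dv-\eb$ for $a,b$ on the \emph{same} side and an $S$-heavy $\dv$; there is no ``compensating shift on the $T$-side'', and the switching you describe (replace $av$ by $bv$) is not what drives the argument. Instead the paper uses the recursive identities of Proposition~\ref{l:recurse}, which express $R_{ab}(\dv)$ in terms of the edge probabilities $P_{av}$ and $2$-path probabilities $Y_{avb}$ via the quantity $\Bad(a,b,\cdot)$. Second, and more importantly, the contraction does \emph{not} act on the correction factor $h$. It acts on the pair $(P,Y)$ of probability functions through the composite operator $\Cc=(\Pc\circ\Rc,\Yc)$; the fixed point is the true $(P,Y)$, and the crucial technical input is an explicit closed-form pair $(P^*,Y^*)$ (and associated $R^*$) shown by direct expansion (Lemma~\ref{l:mapleHigher}) to be a near-fixed-point with error $O(\mu\eps^4)$. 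Iterating $\Cc$ contracts the gap between $(P,Y)$ and $(P^*,Y^*)$, yielding $R=R^*(1+O(\mu\eps^4))$; only \emph{then} does one invoke Lemma~\ref{l:lemmaX} to pass from ratio estimates to probability estimates, telescoping over a path of length $O(\eps\mu n\ell)$ in $\D$. The product of diameter and per-step error gives the $(\min\{s,t\})^{5\degspread-5}m^2/\ell n$ term; the $\log^2/\sqrt{\cdot}$ terms come from concentration of $\sigma^2(\sv),\sigma^2(\tv),\sigma(\sv,\tv)$ (Lemma~\ref{l:sigmaConcBip}) when bounding $\ex_{\cB_m}\correct$.

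Your proposed operator $\Phi$ on $h$ would need a separate contraction argument that the paper does not supply, and it is not clear such an argument exists at the required precision without first controlling $P$ and $Y$. The missing ingredient in your plan is therefore the explicit ansatz $(P^*,Y^*,R^*)$ and the computational verification that it is an approximate fixed point of the operators $\Pc,\Yc,\Rc$; this is what replaces the direct second-order switching expansion you have in mind, and is where the restriction $\degspread<3/5$ actually bites (via the $\eps^5$ in the telescoped error).
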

Recall that in this paper, asymptotic statements refer to $n\to\infty$. The condition $ \ell \log^K n+ n\log^K\ell  =o(m)$, however, together with the trivial upper bound $m\le n\ell$ implies that $\ell\to\infty$ as well.  
We prove this theorem in Section~\ref{s:denseBip}. 
\begin{remark}\lab{r:first}
In view of~\eqn{probbin} and  the fact that $|\G(\ell,n,m)|  =  {\ell n \choose m }$, the formula in \thref{t:mainbip} is equivalent to the assertion that the number of bipartite graphs with degree sequence  $(\sv,\tv)$ is
$$
     {\ell n \choose m }^{-1}\prod_{a\in S}{ n \choose s_a }\prod_{v\in T} { \ell \choose t_v} 
 \exp\bigg(-\frac12 \bigg(1-\frac{\sigma^2(\sv)}{\mu(1-\mu) n}\bigg)
                     \bigg(1-\frac{\sigma^2(\tv)}{\mu(1-\mu)\ell  }\bigg) +O(\xi)\bigg),
$$ 
where $\xi$ is the error term from~\eqn{enumFormula}.
Similarly, \eqn{probbin} and the fact that $|\vec \G(n,m)| =    { n(n-1) \choose m }$ 
gives an asymptotic formula for the number of directed graphs with given degree sequence of in- and out-degrees. 
\end{remark}

Our corresponding result for the sparse case is the following. Although it is not new in the bipartite case (see below), it completes the full range of densities (in a sense, for instance, regarding regular digraphs) in the digraph case. For a sequence $\dv=(\dvA,\dvB)$ as above, define $\DeltaA=\Delta(\sv)=\max_{a\in S}(s_a)$, and similarly $\DeltaB=\Delta(\tv) =\max_{v\in T}(t_v)$. 

\begin{thm}\thlab{t:sparseCaseBip} 
Let  $0<\eps<1/2$, let $\ell$, $n$ and $m$ be integers 
such that $n/\log^4n + \ell/\log^4\ell=o(m)$, and set $s=m/\ell$, and $t=m/n$. 
Let $\D$ be a set of  sequences $\dv=(\sv,\tv)$ such that $\sv$ and $\tv$ have length $\ell$ and $n$, respectively, and such that $M_1(\dvA)=M_1(\dvB)=m\ge 1$ and $\DeltaA^3\DeltaB^3 (n\ell)^{\eps/2} = o(\min\{sm, tm\})$ uniformly over $\D$. 
Either set $\G=\G(\ell,n,m)$ and $\cB_m= \cB_m(\ell,n)$ (the bipartite case), or  set $\G=\vec \G(n,m)$ and $\cB_m= \vec\cB_m(n)$ and restrict to $\ell=n$ (the digraph case). Then uniformly for $\dv\in\D$, 
$$
\pr_{\cD(\G)}(\dv) =  \pr_{\cB_m}(\dv)\corr{\dv}\bigg(1+O\bigg(\frac{\DeltaA^3\DeltaB^3 (n\ell)^{\eps/2}}{m} ( 1/\savg +1/\tavg) +   n^{\eps-1/2}  +  \ell^{\eps-1/2} \bigg)\bigg). 
$$ 
\end{thm}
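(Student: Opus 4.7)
The plan is to adapt the degree switchings and contraction mapping method of~\cite{lw2018} to the bipartite and digraph setting, specialised to the sparse regime. For each $\dv \in \D$, the aim is to compute $N(\dv) := |\{G : \cD(G) = \dv\}|$ and show that $\pr_{\cD(\G)}(\dv)$ differs from $\pr_{\cB_m}(\dv)\corr{\dv}$ by a factor of the form $1+O(\text{error})$ with the error as claimed.

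First, I would introduce a single-step switching: given a bipartite graph $G$ with $\cD(G) = \dv$ and vertices $a, b \in S$, an $(a \to b)$-switching selects an edge $av$ of $G$ and a non-neighbour $w$ of $b$, then replaces $av$ by $bw$, provided $aw \notin E(G)$ and (in the digraph case) $bw$ is not of the forbidden form $cc'$. This produces a graph with degree sequence $\dv^{ab}$ in which $s_a$ has decreased by $1$ and $s_b$ has increased by $1$. By counting ordered pairs of graphs related by such a switching in two ways, one obtains
$$
\frac{N(\dv^{ab})}{N(\dv)} = \frac{s_a (n - s_b)}{(s_b + 1)(n - s_a + 1)} \bigl(1 + O(\text{error})\bigr),
$$
with an analogous estimate for $(v \to w)$-switchings on the $T$ side. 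The error term captures ``bad'' switchings rejected due to multi-edges or forbidden loops, and it is precisely these that generate the correction factor $\corr{\dv}$ in the final formula. Using~\eqn{probbin}, the corresponding binomial ratio $\pr_{\cB_m}(\dv^{ab})/\pr_{\cB_m}(\dv)$ equals the leading term, so only the ``bad'' contribution needs to be tracked. I would then telescope single-step ratios along a bounded-length path from $\dv$ to a reference ``balanced'' sequence $\dv^*$ (for example, the near-regular sequence with degrees $\lfloor s\rfloor$ or $\lceil s\rceil$ in $S$ and similarly in $T$), and control the accumulated error via a contraction mapping argument, tuning the step sizes so that the total error fits within the bound claimed.

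The main obstacle is the treatment of forbidden edges $aa'$ in the digraph case, which induce a correlation between the out-degree $s_a$ and the in-degree $t_{a'}$ of each vertex. The switching operation must avoid creating such edges, and the edges $aa'$ cannot exist in $G$ to begin with, so certain switchings are absent in the digraph case relative to the bipartite case. Isolating the resulting contribution $\delta^{\di}\sigma(\sv,\tv)/\savg(1-\mu)$ in $\corr{\dv}$ requires delicate bookkeeping of second-order obstructions across all vertices simultaneously, and this is where the hypothesis $\DeltaA^3\DeltaB^3(n\ell)^{\eps/2} = o(\min\{sm,tm\})$ is used: it forces the contribution of each ``forbidden'' obstruction to be dominated by the dominant switching count. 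A secondary subtlety is that the sparse regime allows individual degrees to range up to $\DeltaA$ or $\DeltaB$, which may be considerably larger than $\savg$ or $\tavg$, so the single-step estimates and the contraction must remain uniform over this wider spread, rather than relying on near-regularity as was possible in the setting of~\cite{lw2018}.
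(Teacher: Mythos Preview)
Your outline has the right general shape but misses two structural ingredients that the paper relies on, and the switching you describe is not quite right: removing $av$ and adding $bw$ changes four degrees unless $v=w$, but then your side condition $aw\notin E(G)$ contradicts $av\in E(G)$. The intended move is to pick an edge $av$ with $v$ a non-neighbour of $b$ and replace $av$ by $bv$; this is minor.

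More substantively, a single switching count gives at best $R_{ab}(\dv)=\frac{s_a}{s_b}\bigl(1+O(\Delta_S\Delta_T/m)\bigr)$, which is not sharp enough: the factor $\correct$ contributes to this ratio a term of order $(s_a-s_b)M_2(\tv)/m^2$, and that term must be matched \emph{exactly} or it accumulates over the $O(m)$ telescoping steps into an $O(1)$ discrepancy. The paper does not extract it from a refined switching count but by iterating the recursive relations of Proposition~\ref{l:recurse} twice: the crude bound $P_{av}=O(\Delta_S\Delta_T/m)$ from Lemma~\ref{l:simpleSwitching} yields crude bounds on $Y_{avb}$, $B(a,b,\cdot)$ and hence $R_{ab}$; feeding that $R_{ab}$ back into the recursion for $P$ gives $P_{av}\sim d_ad_v/m$, whence $Y_{avb}\sim d_a[d_v]_2 d_b/m^2$, and now $B(a,b,\dv-\eb)$ picks up precisely the $d_bM_2(\tv)/m^2$ term (plus the $\delta^{\di}d_{b'}/m$ term) that appears in the claimed ratio (Claim~\ref{RSparseBiAndDi}). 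Your ``contraction mapping argument'' is gesturing at this iteration, but the operator being contracted is the composite $\Rc\circ\Pc\circ\Yc$ built from these recursions, not a switching count, and the second pass is what produces the $\sigma^2$ and $\sigma(\sv,\tv)$ corrections.

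Finally, telescoping ratios from $\dv$ to a reference $\dv^*$ yields only $N(\dv)/N(\dv^*)$, not $N(\dv)$; you would need an independent evaluation of $N(\dv^*)$, which you do not supply. The paper avoids this entirely via Lemma~\ref{l:lemmaX}: having shown that the ratios of adjacent-sequence probabilities agree (up to $e^{O(\delta)}$) in $\cS'=\cD(\G)$ and in the ideal space $\cS$ built from $\pr_{\cB_m}\cdot\correct$, and having shown via the concentration Lemma~\ref{l:sigmaConcBip} that a high-probability set $\W$ carries mass $1-o(1)$ in both spaces, the lemma delivers the pointwise estimate $\pr_{\cS'}(\dv)=e^{O(r\delta+\eps_0)}\pr_{\cS}(\dv)$ directly, with no anchor needed. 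You mention neither the concentration step nor this two-space comparison, and without them the argument does not close.
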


We prove this theorem in Section~\ref{s:sparseBip} before tackling the more involved case of medium range densities. We note at this point that if we restrict $\D$ to the set of sequences where $s_a,t_v\ge 1$ for all $a\in S$ and $v\in T$ then  $m\ge n, \ell$ and so the condition 
$n/\log^4n + \ell/\log^4\ell=o(m)$ is always true (as $n$   tends to infinity), and the condition 
 $\DeltaA^3\DeltaB^3(n\ell)^{\eps/2} =o(\min\{sm,tm\})$ is implied by $\DeltaA^3\DeltaB^3 = o(m^{1-\eps})$ as $s,t\ge 1$.  
Sequences failing these conditions therefore contain entries 0, which are less interesting since the formula is then often implied by considering only the non-zero entries. One could also apply our method to reach further into this very sparse case, but given these considerations, it is possibly not warranted, and we do not attempt to do so here. Similarly, further examination of our argument should yield results covering cases with wider disparities between $\ell$ and $n$. 

There have been many contributions to this topic in the past. Finding (asymptotic) formulae for the number of bipartite graphs with a given degree sequence goes back to Read's thesis~\cite{read1958} and gained wider interest since the 1970's, including~\cite{ bbk1972, b1974, bm1986,CM, es1971,  m1984, MWX,  mp1976, o1969, wormald1980}. 
In particular, the sparse case is best covered by Greenhill, McKay and Wang~\cite{GMW}, who proved an asymptotic formula for the number of bipartite graphs of a given sequence $(\sv,\tv)$, provided that $M_1(\sv)=M_1(\tv)$ and $\sv_{\max}\tv_{\max} =o\left((M_1(\sv))^{2/3}\right)$, and their result covers the bipartite version of \thref{t:sparseCaseBip}, in terms of both the density range $m/n\ell$ and the size of the error terms. 
This  
is supplemented by formulae for the number of dense bipartite graphs with specified degree sequences by Canfield, Greenhill, and McKay~\cite{CGM} that apply as long as $\ell$ and $n$ are not too far apart. 
In fact, in \cite{CGM} it was found that the formulae for the sparse and the dense case can be unified to produce the formula in \thref{t:mainbip}, which was implicitly conjectured in~\cite{CGM} to hold for the cases in between. This conjecture is essentially verified by  \thref{t:mainbip} for a wide range of parameters $\sv$ and $\tv$.

A special case is that of so-called semi-regular bipartite graphs, in which all vertices on one side of the bipartition have degree $s$, say, and all vertices on the other side have degree $t$. So let   $\sv$ denote the constant vector of length $\ell$ in which every entry is $s$, and  $\tv$ denote the constant vector of length $n$ in which every entry is $t.$ In 1977, Good and Crook~\cite{gc1977} suggested that the number of bipartite graphs with degree sequence $(\sv,  \tv)$
is roughly 
$ 
 \binom{n}{s}^{\ell}\binom{\ell}{t}^{n}/\binom{\ell n}{m} 
$
when $m= s\ell = tn$.
Some of the references mentioned above, in  particular~\cite{MWX} and~\cite{CM}, verify that this formula is correct up to a constant factor, for  particular ranges of $m$, $n$, $s$ and $t$,  by showing that the number is 
\bel{suggbip}
\frac{\binom{n}{s}^{\ell}\binom{\ell}{t}^{n}}{\binom{\ell n}{m}}  e^{-1/2+o(1)}.  
\ee
 This asymptotic assertion is immediately equivalent to
$\pr_{\G(\ell,n,m)} (\sv,  \tv) \sim \pr_{  \cB_m(\ell,n)}(\sv,  \tv)\correct(\sv,  \tv)$.  Consequently, \thref{t:mainbip} verifies~\eqn{suggbip} for a new range  of parameters in the moderately dense case.

For digraphs without loops, there are far fewer corresponding results. For the dense case, i.e.~when the number of edges is $\Theta(n^2)$, a result by 
Greenhill and McKay~\cite{GM} implies an asymptotic formula.
Barvinok~\cite{b2010} provides upper and lower   bounds  which are coarser but their bounds apply to a wider range of in- and out-degree sequences. 
The only result we are aware of that explicitly enumerates loopless digraphs by degree sequence in the sparse case is by Bender~\cite{b1974}, which only applies for bounded degrees. However, it is clear that the standard   techniques used previously for sparse graph enumeration could be used to increase the density and obtain results more in line with the existing ones for bipartite graphs.

 {\subsection{Models for the degree sequences of   random graphs }
In 1997, McKay and Wormald~\cite{degseq1} showed that if a certain enumeration formula holds for the number of graphs of a given degree sequence then the degree sequences of the random graph models $\Gnm$ and $\Gnp$ can be modelled by certain binomial-based models. The model for $\Gnp$ showed that the degree sequence was distributed almost the same as a sequence of independent binomial random variables, subject to having even sum, but with a slight twist that introduces dependency. It was also shown there that for properties of the degree sequence satisfying some quite general conditions, this conditioning and dependency make no significant difference, and hence those properties are essentially the same as for a sequence of independent binomials.

At that time, the existing formulae for the sparse and the dense case supplied that relationship of the models. Recently, the enumeration results of \cite{lw2018} for the medium range provide the missing formulae for the gap range of densities, establishing a conjecture from~\cite{degseq1}. 
A natural supposition since~\cite{degseq1} appeared was that the degree sequences of random bipartite graphs and digraphs satisfy similar properties. This was an implicit conjecture of McKay and Skerman~\cite{MS}, who adapted some of the arguments in~\cite{degseq1} to show that the existing enumeration results for dense bipartite graphs and directed graphs imply a binomial-based model of the degree sequences of such graphs. This is quite analogous to the model in the graph case, except that it contains an extra complicating conditioning required because the sum of degrees of the vertices in each part must be equal.
McKay and Skerman point out that, once the  enumeration formulae are proved in the missing ranges, one would expect the model results to follow. 
Our enumeration results stated above provide what is necessary to immediately establish the relevant conjecture in the case of $\G(\ell,n,m)$ and $\vec\G(n,m)$, as described below, provided that $\ell$ and $n$ are not too disparate. For their binomial random graph siblings $\G(\ell,n,p)$ and $\vec\G(n,p)$, in which edges are selected independently with probability $p$, one would expect that arguments similar to those in~\cite{MS}, in conjunction with our results, will now suffice. 
 
Let $A_n$ and $B_n$ be two sequences of probability spaces with the same underlying set for each $n$. Suppose that whenever a sequence of events $H_n$ satisfies $\Pr(H_n)=n^{-O(1)}$ in either model, it is true that $\Pr_{A_n}(H_n)\sim\Pr_{B_n}(H_n)$, where by $f(n)\sim g(n)$ we mean that $f(n)/g(n)\to 1$ as $n\to\infty$. Then we call $A_n$ and $B_n$ {\em asymptotically quite equivalent (a.q.e.).}  We use $\omega$ to mean a function going to infinity as $n\to\infty$, possibly different in all instances.

\begin{thm}\thlab{t:bipmodel} 
\begin{enumerate}[label=(\alph*)]
\item\label{model-a}  The probability spaces  $\cD(\vec \G(n,m))$ and $\vec \cB_m(n) $ are a.q.e.~provided that  $\log ^3n  =  o(\min\{m, n\ell-m\})$;
\item\label{model-b} The probability spaces $\cD(\G(\ell,n,m))$  and $\cB_m(\ell,n) $ are a.q.e.~provided that $\max\{m, n\ell-m\}=\omega \log n$ and at least one of the following holds: 
\begin{enumerate}[label=(\roman*)]
\item\label{model-b-ii}
$\ell\le n$ and for some fixed $\mu_0>0$ and $\eps>0$ 
we have 
$m<\mu_0n\ell$ and $n^3=o(\ell^2m^{1-\eps})$;
\item\label{model-b-iii}
 $n/\log^4n + \ell/\log^4\ell=o(m)$ and  for some fixed $\eps>0$ we have $m^{4+\eps}=o(n^2\ell^2\min\{ \ell,n\})$;
\item\label{model-b-iv}
$m=o(\min\{n/\log^2n,   \ell/\log^2\ell\})$ and $\log^3 \ell + \log^3 n=o(m)$.
\end{enumerate}
  \end{enumerate}
 \end{thm}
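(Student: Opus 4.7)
The plan is to deduce \thref{t:bipmodel} from \thref{t:mainbip} and \thref{t:sparseCaseBip}, following the template established by McKay--Wormald~\cite{degseq1} and adapted for bipartite graphs by McKay--Skerman~\cite{MS}. In both parts~\ref{model-a} and~\ref{model-b}, the enumeration theorems provide
\[
\pr_{\cD(\G)}(\dv)=\pr_{\cB_m}(\dv)\,\corr{\dv}\,(1+O(\xi(\dv)))\qquad(\dv\in\D),
\]
with $\xi(\dv)=o(1)$ uniformly. I would identify a ``typical'' subset $\D^\ast\se\D$ on which $\corr{\dv}=1+o(1)$, and then verify super-polynomial concentration $\pr((\D^\ast)^c)=o(n^{-K})$ for every fixed $K$ in both models $\cD(\G)$ and $\cB_m$. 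Together with the enumeration identity on $\D^\ast$, this forces $\pr_{\cD(\G)}(H_n)\sim\pr_{\cB_m}(H_n)$ for any event $H_n$ whose probability is at least $n^{-O(1)}$ in one of the two models, which is exactly the definition of a.q.e.

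A suitable definition is to take $\D^\ast$ to be the set of $\dv=(\sv,\tv)$ satisfying (i)~$|s_a-s|\le\sqrt{s\log n}\cdot h(n)$ and $|t_v-t|\le\sqrt{t\log n}\cdot h(n)$ for a slowly growing $h$, chosen so that $\D^\ast\se\D$ under whichever of \thref{t:mainbip} or \thref{t:sparseCaseBip} we are invoking; and (ii)~$\sigma^2(\sv)=s(1-\mu)(1+o(1))$, $\sigma^2(\tv)=t(1-\mu)(1+o(1))$, and, in the digraph case, $\sigma(\sv,\tv)=o(s(1-\mu))$. By direct substitution into~\eqn{corrH}, condition~(ii) yields $\corr{\dv}=1+o(1)$. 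The super-polynomial bound on $\pr_{\cB_m}((\D^\ast)^c)$ is a standard calculation: Chernoff-type tails for the $\Bin(n,\mu)$ marginals handle~(i), and Hoeffding applied to the bounded variables $(s_a-s)^2$ on the event of~(i) handles~(ii). The conditioning $\Sigma_1=\Sigma_2=m$ defining $\cB_m$ costs at most a polynomial factor via a local central limit theorem and hence preserves the super-polynomial bounds.

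To transfer concentration to $\cD(\G)$, I would split $(\D^\ast)^c$ into $\D\sm\D^\ast$ and $\D^c$. On $\D\sm\D^\ast$ the enumeration identity applies and the correction factor has at most mild growth on $\D$ (bounded through~\eqn{corrH} using the degree-spread hypothesis in force), so $\pr_{\cD(\G)}(\D\sm\D^\ast)$ is of the same super-polynomial order as $\pr_{\cB_m}(\D\sm\D^\ast)$. For $\D^c$ one argues directly: marginal degrees in $\G(\ell,n,m)$ and $\vec\G(n,m)$ are hypergeometric with sub-Gaussian tails around their means $s$ and $t$. To cover the dense end of part~\ref{model-b}, where $m$ is close to $n\ell$, I would pass to the bipartite complement, which is uniform on bipartite graphs with $n\ell-m$ edges and, via $\binom n k=\binom n{n-k}$, identifies $\cB_m(\ell,n)$ with $\cB_{n\ell-m}(\ell,n)$; the prerequisite $\max\{m,n\ell-m\}=\omega\log n$ ensures that one of the two ends is always inside the range of our enumeration theorems.

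The principal obstacle is the very sparse subcase~\ref{model-b-iv}: when $m=o(\min\{n/\log^2n,\ell/\log^2\ell\})$ the averages $s$ and $t$ tend to $0$ and the hypotheses of \thref{t:sparseCaseBip} can fail, so the above framework cannot be applied as-is. In this regime almost every degree is $0$ or $1$, so graphs with a given $(\sv,\tv)$ are essentially in bijection with partial matchings between the non-isolated vertices; the resulting multinomial count matches $\pr_{\cB_m}(\dv)$ up to $(1+o(1))$ by a short direct calculation. Combining this ad hoc argument for~\ref{model-b-iv} with \thref{t:mainbip} for~\ref{model-b-ii}, \thref{t:sparseCaseBip} for~\ref{model-b-iii}, and the digraph specialisations of both for part~\ref{model-a}, yields the full statement.
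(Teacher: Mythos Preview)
Your overall framework---enumeration formula on a typical set plus super-polynomial concentration of its complement in both models---is exactly what the paper does, and the paper streamlines it by simply recycling the sets $\W$ already built inside the proofs of \thref{t:mainbip} and \thref{t:sparseCaseBip} (where $\pr_{\cS'}(\W)$ and $\pr_{\cB_m}(\W)$ are both shown to be $1-o(\bar n^{-\omega})$ and $\correct(\dv)=1+o(1)$ on $\W$), rather than constructing a fresh $\D^\ast$ and re-deriving tail bounds.

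There are, however, two genuine gaps.  First, your treatment of the very sparse case~\ref{model-b-iv} via ``partial matchings'' does not work for a.q.e.: in this range the degrees are bounded with super-polynomially high probability, but the event that \emph{every} degree is $0$ or $1$ only has probability $1-\Theta(m^2/n)$, which is merely $1-n^{-1+o(1)}$ when $m$ is polylogarithmic.  That is not small enough to be absorbed by an $n^{-O(1)}$ event, so you cannot reduce to matchings.  The paper instead observes (via \thref{l:sigmaConcBip}) that all degrees are $O(1)$ with probability $1-n^{-\omega}-\ell^{-\omega}$ and invokes Bender's bounded-degree enumeration theorem~\cite{b1974}, then checks that Bender's formula matches $\pr_{\cB_m}(\dv)$ asymptotically.

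Second, for part~\ref{model-a} your ``digraph specialisations of both'' does not cover the full range: \thref{t:mainbip} only reaches $m<\mu_0 n^2$ and complementation only reaches $m>(1-\mu_0)n^2$, leaving the middle uncovered.  The paper closes this gap by citing McKay--Skerman~\cite[Theorem~1(d)]{MS} for $\min\{m,n(n-1)-m\}>n^2/c\log n$.  (Your complementation argument for the ``dense end of part~\ref{model-b}'' is, by contrast, unnecessary: each of the three subcases in~\ref{model-b} already forces $m$ to be at most a small constant times $n\ell$, so no dense end arises there.)
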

 We prove this theorem in Section~\ref{s:denseBip}. 
 We note that the assertion for~\ref{model-a} for the range $m> n^2 / \log n$ is covered by McKay and Skerman~\cite[Theorem 1(d)]{MS}. For the bipartite case~\cite[Theorem 1(c)]{MS}  covers the range  $m>\ell n / \log n$ and $\ell = n^{1+o(1)}$.  \thref{t:bipmodel}~\ref{model-b}\ref{model-b-ii} applies for a slightly larger range of $\ell$ and $n$, at least for large-ish $m$. 
The last condition in~\ref{model-b-ii}  is equivalent to  
$$\frac{n^{2+\eps}}{\ell^{3-\eps}} \ll \mu^{1-\eps}$$ for some $\eps>0,$ where $\mu = m/n\ell$. Thus, $n$ may be as large as $\ell^{2-\eps}$ for sufficiently large density $\mu$. 

Finally, we note that when $\min\{\ell,n\}\gg \max\{\ell,n\}^{10/11+\eps}$ for fixed $\eps>0,$ then all values of $m$ are covered by \thref{t:bipmodel} (swapping $\ell$ and $n$ in~\ref{model-b}\ref{model-b-ii} if necessary) and using~\cite[Theorem 1(d)]{MS} for the dense cases of both~\ref{model-a} and~\ref{model-b}.

%%%%%%%%%%%%%%%%%%%%%
\subsection{ Edge probabilities.}
As a by-product of our proof of \thref{t:mainbip} in Section~\ref{s:denseBip}, we obtain asymptotic formulae for the edge probabilities in a random bipartite graph with a given  degree sequence, and of a random digraph with a given sequence of out- and in-degrees.

\begin{thm}\thlab{t:edgeprobability} 
Let $n$, $\ell$, $m$, and $\D$ be as in \thref{t:mainbip}  and  let $\cG = \cG(\ell,n,m)$ or $\cG = \vec\cG(n,m)$. Let $a\in S$ and $v\in T $, with $v\ne a'$   in the digraph case. Then uniformly for $\dv=(\sv,\tv)\in \D$, the probability that  $av$  is an edge of  $G\in \cG$, conditional on the event that $\cD(G)=\dv$, is
$$
  \frac{s_at_v}{m-\delta^{\di}\tavg}\left(1-\frac{(s_a-\savg)(t_v-\tavg)}{m-\delta^{\di}\tavg-\tavg\savg}
+\frac{(s_a-\savg)\sigma^2(\tv)}{\tavg\savg(\ell-\tavg)}
+\frac{(t_v-\tavg)\sigma^2(\sv)}{\tavg\savg(n-\savg)}
+\frac{\delta^{\di}(t_{\mate{a}}+s_{\mate{v}})}{n-1} +O\left(\min\{s,t\}^{4\degspread-4}\frac{m}{n\ell}\right)
 \right),
$$
where $\savg= m/\ell$ and $\tavg = m/n$. 
\end{thm}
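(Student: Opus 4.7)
The approach I would take is to derive the edge probability from a simple combinatorial identity and then invoke \thref{t:mainbip} to evaluate the resulting ratio. Let $N(\dv)$ be the number of graphs (bipartite or digraph, as appropriate) with degree sequence $\dv$, let $N^+_{av}(\dv)$ be the number of those additionally containing the edge $av$, and put $p(\dv) := N^+_{av}(\dv)/N(\dv)$. Set $\dv' := \dv - \ea - \ev$. Deleting the edge $av$ gives a bijection between the set counted by $N^+_{av}(\dv)$ and the set of graphs with degree sequence $\dv'$ that do \emph{not} contain $av$, yielding the identity
\[
p(\dv) = \bigl(1 - p(\dv')\bigr)\,\frac{N(\dv')}{N(\dv)}.
\]

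Both $\dv$ and $\dv'$ (with $m$ replaced by $m-1$) satisfy the hypotheses of \thref{t:mainbip}, since only one entry in each part changes by one and all range and density conditions are preserved. Applying the formula of Remark~\ref{r:first} to numerator and denominator, the ambient binomial coefficients combine to give
\[
\frac{N(\dv')}{N(\dv)} = \frac{(\ell n - m + 1)\,s_a\, t_v}{m\,(n - s_a + 1)(\ell - t_v + 1)} \cdot \frac{\correct(\dv')}{\correct(\dv)}\bigl(1 + O(\xi)\bigr)
\]
in the bipartite case, with analogous substitutions in the digraph case where $|\vec\cG(n,m)|=\binom{n(n-1)}{m}$ and the binomials $\binom{n-1}{\cdot}$ replace $\binom{n}{\cdot}$; here $\xi$ is the error from \eqn{enumFormula}. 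Expanding the prefactor yields the leading order $s_a t_v/(m-\delta^{\di}\tavg)$ asserted in the statement, the denominator $m-\delta^{\di}\tavg$ rather than $m$ arising naturally from the restriction to loopless digraphs. Substituting into the identity and bootstrapping $p(\dv')$ by its own leading value (which differs from $p(\dv)$ by only $O(p(\dv)/m)$) then recovers $p(\dv)$ up to corrections well inside the claimed error tolerance.

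The bulk of the work is then to expand $\correct(\dv')/\correct(\dv)$ to the necessary precision. Under $\dv\mapsto\dv'$ the six relevant quantities $\savg,\tavg,\mu,\sigma^2(\sv),\sigma^2(\tv)$ and (in the digraph case) $\sigma(\sv,\tv)$ each shift by explicit amounts: a short computation gives that $\sigma^2(\sv)$ changes by $-2(s_a-\savg)/\ell+O(1/\ell)$, $\sigma^2(\tv)$ by $-2(t_v-\tavg)/n+O(1/n)$, and $\sigma(\sv,\tv)$ picks up contributions proportional to $t_{\mate{a}}-\tavg$ and $s_{\mate{v}}-\savg$, producing the digraph term $\delta^{\di}(t_{\mate{a}}+s_{\mate{v}})/(n-1)$ in the final formula. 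Since the argument of $\correct$ is $O(1)$, one can Taylor-expand $\exp(\cdot)$ to first order; multiplying the resulting linear change in the exponent by the leading factor $s_a t_v/(m-\delta^{\di}\tavg)$ produces precisely the parenthesised terms of the statement.

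The main obstacle is the bookkeeping: tracking the first-order shifts in all six moments simultaneously, combining them with the expansion of the binomial prefactor $(\ell n-m+1)s_a t_v/(m(n-s_a+1)(\ell-t_v+1))$, and verifying that the various $O(\cdot)$ contributions collapse into the single claimed error $\min\{\savg,\tavg\}^{4\degspread-4}m/(n\ell)$. The range hypotheses $|s_a-\savg|\le\savg^{\degspread}$ and $|t_v-\tavg|\le\tavg^{\degspread}$ from \thref{t:mainbip} are exactly what is needed to control the quadratic remainders in the Taylor expansions and absorb them into this error term. The digraph covariance contributes extra subtlety because it couples the $S$ and $T$ parts through the mate assignment, but the hypothesis $v\ne\mate{a}$ keeps the combinatorics manageable.
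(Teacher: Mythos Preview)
Your approach via the identity $p(\dv)=(1-p(\dv'))N(\dv')/N(\dv)$ and \thref{t:mainbip} is natural, but it cannot deliver the error term claimed in the statement. Applying \thref{t:mainbip} (equivalently, Remark~\ref{r:first}) separately to $N(\dv)$ and $N(\dv')$ and dividing gives
\[
\frac{N(\dv')}{N(\dv)}=\text{(explicit main term)}\cdot\bigl(1+O(\xi)\bigr),
\]
where $\xi$ is the full error from~\eqn{enumFormula}, namely $\xi=\log^2\ell/\sqrt{\ell}+\log^2 n/\sqrt{n}+\min\{s,t\}^{5\degspread-5}m^2/(n\ell)$. This $\xi$ is in general \emph{much} larger than the target error $\min\{s,t\}^{4\degspread-4}m/(n\ell)=\mu\eps^4$. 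For the third summand, the ratio is $\min\{s,t\}^{\degspread-1}m$, which tends to infinity under the hypotheses; for the first two summands, take for instance $\ell=n$ and $m$ of order $n^{3/2}$ to see that $\log^2 n/\sqrt n$ dwarfs $\mu\eps^4$. There is no cancellation to exploit here: the multiplicative error $(1+O(\xi))$ in \thref{t:mainbip} is the accumulated error over a telescoping path of length $\Theta(\eps m)$ in the auxiliary graph $G$, and dividing two such estimates does not undo that accumulation.

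The paper bypasses this entirely. The edge probability $P_{av}(\dv)$ is exactly the quantity approximated directly by the function $P^*_{av}(\dv)=\pi$ defined at the start of Section~\ref{s:denseBip}, and \thref{RDenseBip} (equation~\eqn{appP}) already establishes $P_{av}(\dv)=P^*_{av}(\dv)(1+O(\mu\eps^4))$ as a by-product of the fixed-point argument inside the proof of \thref{t:mainbip}. The stated formula is then just a reparameterisation of $\pi$ in terms of $s_a$, $t_v$, $\sigma^2(\sv)$, $\sigma^2(\tv)$ and (in the digraph case) $t_{a'}$, $s_{v'}$. In other words, the sharp edge-probability estimate is an \emph{ingredient} of the proof of \thref{t:mainbip}, obtained before the lossy telescoping step; it cannot be recovered afterwards from the theorem statement alone.
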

 We prove this theorem in Section~\ref{s:denseBip}.

%%%%%%%%%%%%%%%%%%%%%%%%%%%%%%

%		PRELIMINARIES 

%%%%%%%%%%%%%%%%%%%%%%%%%%%%%%

\section{Preliminaries}
\lab{s:prelim}

As we indicated in the introduction, the argument in this paper derives from that  in~\cite{lw2018}, whose notation and structure we will follow quite closely. 
Differences occur though to account for the fact that we are dealing with certain forbidden edges. 
Naturally, we resort to notation used in~\cite{lw2018} and add notation that is special to the bipartite case. We then state several intermediate results from~\cite{lw2018}. 

\subsection{Notation}\lab{s:notation} 
Our {\em graphs} are simple, that is, they have no loops or multiple edges. We write $a\sim b$ to mean that $a/b\to 1$, $f=O(g)$ if $|f|\le Cg$ for some constant $C$,  and $f=o(g)$ if $f/g\to 0$.  We use $\omega$ to mean a function going to infinity, possibly different in all instances. Also $\binom{V}{2}$ denotes the set of $2$-subsets of the set $V$, and $V$ is often of the form $[N]$, which denotes $\{1,\ldots , N\}$. In this paper  multiplication by juxtaposition has precedence over ``$/$", so for example $j/\mu N^2= j/(\mu N^2)$. 

Let $N$ be an integer and let $V=[N]$. 
Assume that $\cA=\cA(N)\se \binom{ [N] }{2}$ is specified; we call this the set of {\em allowable pairs}. Note that as usual we regard the edge joining vertices $u$ and $v$ as the unordered pair $\{u,v\}$, and denote this edge by $uv$ following standard graph theoretic notation. 
A sequence $\dv= (d_{1},\ldots,d_{N})$ is called  
{\em $\cA$-realisable} 
if there is a graph $G$ 
on vertex set $V$ such that vertex $a\in V$ has degree $d_a$ and all edges of $G$ 
are allowable pairs. 
In this case, we say $G$ {\em realises $\dv$ over $\cA$}.  In standard terminology, if $\dv$ is $\binom{V}{2}$-realisable, it is  {\em graphical}. 
Let $\cG_{\cA}(\dv)$ be the set of all graphs that realise $\dv$ over $\cA$. 
The graph case when $\cA=\binom{V}{2}$ is dealt with in~\cite{lw2018}. 
In this paper, we are particularly interested in the following two special cases of $\cA$. 
\begin{itemize}
\item  {\bf Bipartite graph case.\\} 
Let $ \ell, n$ be integers and set $N =\ \ell +n$. 
Set $\cA=\cA^{\bi} = \{uw:u\in [ \ell], w\in  [N]\sm[\ell]\}$. 
Then $\cG_\cA(\dv)$ is the set of all bipartite graphs $G$ on vertex set $[N]$ that 
realise the degree sequence $\dv=(\sv,\tv)$ with one part being $S=[\ell]$ and the other part $T=[N]\setminus[\ell]$. 

\item {\bf Digraph case.\\} 
Assume that $N$ is even and let $n$ be an integer such that $N =2n$. 
Set $\cA=\cA^{\di} = \{uw: u\in [n], w\in [n+1,2n], u+n \neq w\}$. 
Then $\cG_\cA(\dv)$ corresponds to the set of all bipartite graphs $G$ on vertex set $[2n]$ that 
realise the degree sequence $\dv=(\sv,\tv)$ with one part being $S=[n]$ and the other part $T=[2n]\setminus[n]$ that do not contain any edge of a predefined matching, or equivalently,  
$\cG_\cA(\dv)$ corresponds to the set of all digraphs $G$ on vertex set 
$[n]$ that have no loops and that realise the out-degree sequence $\sv$ and in-degree sequence $\tv$. Recall that $\mate{a}=a+n$ for $a\in S$ and $v'= v-n$ for $v\in T$ so that edges of the form $aa'$ are forbidden. 
\end{itemize} 

Let $\ell$, $n$ be integers and suppose that $\dv$ is a sequence of length  $\ell + n$. 
Recall the definitions of  $\sv=\sv(\dv)$, $\tv=\tv(\dv)$, $\savg$, $\tavg$, $M_1(\dv)$, $\sigma(\sv,\tv)$, and of $\sigma^2(\dv)$ from the introduction. 
We also use $\Delta$ or $\Delta(\dv)$ to denote $\max_i d_i$, in line with the notation for maximum degree of a graph. 
With $\cA$ understood (to be either $\cA^{\bi} $ or $\cA^{\di} $ in this paper) we write $\mu=\mu(\dv)	$ for the quantity $M_1(\dv)/|\cA|$ and note that this agrees with the definition of $\mu$ given just above~\eqref{corrH} in the introduction. 
Throughout this paper we use ${\bf e}_i$ to denote the elementary unit vector with 1 in its  coordinate indexed by $i$. 
We say $\dv$ is  {\em balanced}   if $M_1(\sv)=M_1(\tv)$. Clearly being balanced is necessary for $\dv$ to be $\cA$-realisable in either of the cases $\cA=\cA^{\bi}$ or $\cA=\cA^{\di}$. Furthermore, we say that $\dv$ is {\em $S$-heavy} if $M_1(\sv)=M_1(\tv)+1$, and we call it {\em $T$-heavy}  if $M_1(\sv)=M_1(\tv)-1$.  

Finally, we use $1\pm \xi$ to denote a quantity between $1-\xi$ and $1+\xi$ inclusively.
\subsection{Cardinalities, probabilities and ratios} 
We first quote a simple result by which we leverage absolute estimates of probabilities from comparisons of related probabilities.
\begin{lemma}[Lemma 2.1 in \cite{lw2018}]\thlab{l:lemmaX} 
Let  $\cS$ and $\cS'$ be probability spaces with the same underlying set $\Omega$.  
Let $G$ be a graph with vertex set  $\W\subseteq \Omega$  such that $ \pr_\cS(v),\pr_{\cS'}(v) >0$  for all $v\in \W$. Suppose that  $\eps_0,  \delta >0$ such that 
 $\min\{ \pr_\cS(\W), \pr_{\cS'}(\W)\}>1-\eps_0>1/2$,  and such that  for every edge $uv$ of $G$,  
$$
\frac{\pr_{\cS'}(u)}{\pr_{\cS'}(v)}=   e^{O( \delta )}  \frac{\pr_{\cS}(u)}{\pr_{\cS}(v)}   
$$
where the constant implicit in $O(\cdot)$ is  absolute. 
Let $r$ be an upper bound on the diameter of $G$ and assume $r<\infty$. Then for each $v\in \W$ we have
$$
  \pr_{\cS'}(v)  =e^{O(r \delta +\eps_0)}  \pr_\cS(v) ,
$$
with again a   bound uniform for all $v$.
\end{lemma}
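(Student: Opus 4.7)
The plan is to reduce everything to studying the ratio function $f(v):=\pr_{\cS'}(v)/\pr_\cS(v)$ on $\W$, which is positive and well-defined by hypothesis. The edge hypothesis translates directly into the additive bound
$$|\log f(u) - \log f(v)| = O(\delta) \qquad \text{for every edge } uv \text{ of } G.$$
Since $G$ has diameter at most $r$, I would then chain this bound along a shortest path between any two points of $\W$ to obtain
$$|\log f(u) - \log f(v)| = O(r\delta) \qquad \text{for all } u,v \in \W,$$
so $f$ is constant up to a multiplicative factor of $e^{O(r\delta)}$ on the whole of $\W$. This is the relative, purely multiplicative half of the statement, and it is where the assumption on the edge-wise ratios and on the diameter is used.

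The second step is to convert this relative information into an absolute estimate using the mass hypothesis as an anchor. The identities
$$\sum_{v\in\W}\pr_\cS(v)\,f(v) = \pr_{\cS'}(\W), \qquad \sum_{v\in\W}\pr_\cS(v) = \pr_\cS(\W)$$
show that the $\pr_\cS$-weighted average of $f$ on $\W$ equals the ratio $\pr_{\cS'}(\W)/\pr_\cS(\W)$. Since both the numerator and the denominator lie in $(1-\eps_0,1]$ and $\eps_0<1/2$, this ratio belongs to $(1-\eps_0,\,1/(1-\eps_0))\subseteq [e^{-O(\eps_0)},\,e^{O(\eps_0)}]$, i.e.\ it is $1+O(\eps_0)$.

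To conclude, fix any $v\in\W$. The multiplicative part of the argument implies that every value of $f$ on $\W$ lies in $[f(v)\,e^{-O(r\delta)},\,f(v)\,e^{O(r\delta)}]$, hence so does its $\pr_\cS$-weighted average. Combining this with the anchor bound yields
$$f(v) = e^{O(r\delta+\eps_0)} \qquad \text{uniformly in } v\in\W,$$
which is exactly the asserted estimate $\pr_{\cS'}(v) = e^{O(r\delta+\eps_0)}\pr_\cS(v)$.

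I do not anticipate a real obstacle: the argument is the standard ``logarithmic ratio on a connected graph plus a total-mass anchor'' device. The only items to verify carefully are that the absolute constant hidden in $O(\delta)$ on each edge multiplies correctly to $O(r\delta)$ after chaining over a path of length at most $r$ (immediate, since the implicit constant is absolute and independent of the edge), and that the condition $\eps_0<1/2$ is precisely what is needed to convert the additive information $\pr_\cS(\W),\pr_{\cS'}(\W)>1-\eps_0$ into the multiplicative bound $e^{O(\eps_0)}$ on their ratio.
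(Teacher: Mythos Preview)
Your argument is correct and is precisely the natural proof: pass to the log-ratio $\log f$, use the edge hypothesis and the diameter bound to get multiplicative control of $f$ up to $e^{O(r\delta)}$, and then anchor with the total-mass condition to pin down the constant to within $e^{O(\eps_0)}$. The paper does not reprove this lemma but simply quotes it from~\cite{lw2018}, where the proof follows the same line you have outlined.
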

Using the lemma calls for analysing the ratios of probabilities both in the ``true'' probability space $\cS'$ (which will be the degree sequence of $\G(\ell,n,m)$ or of $\vec\G(n,m)$) and in an ``ideal'' probability space $\cS$ by which we are approximating the true space. 
This leads to computing ratios of closely related instances of the expression on the right hand side of~\eqref{enumFormula}. 
Let $\dv=(\sv,\tv)$ be a sequence where $\sv$ and $\tv$ are of length $\ell$ and $n$, respectively, let $a, b\in S$, and assume that $\dv$ is $S$-heavy. 
Note first that the following are immediate from~\eqref{probbin}:
$$\frac{\Pr_{\cB_m(\ell,n)}(\sv-\ea,\tv)}{\Pr_{\cB_m(\ell,n)}(\sv-\eb,\tv)} 
	= \frac{s_a(n+1-s_b)}{s_b(n+1-s_a)} \quad \text{ and }\quad
\frac{\Pr_{\vec\cB_m(n)}(\sv-\ea,\tv)}{\Pr_{\vec\cB_m(n)}(\sv-\eb,\tv)} 
	= \frac{s_a(n-s_b)}{s_b(n-s_a)}.$$ 
Similarly,  straight from the definition of $\correct$ in~\eqref{corrH} we have 
\begin{align*} 
\frac{\corr{\sv-\ea,  \tv}}{\corr{\sv-\eb,  \tv}} 
 &= 
  \exp\bigg(\frac{s_b-s_a}{s\ell(1-\mu')}
 \left(1-\frac{\sigma^2(\tv)}{t(1-\mu')}\right)
 +\frac{ \delta^{\di} (t_{a'}-t_{b'})}{sn(1-\mu')}\bigg),  
\end{align*} 
where $\mu'=\mu(\sv-\ea,\tv)$, $s=s(\sv-\ea,\tv)$, $t=t(\sv-\ea,\tv)$, 
which are, in this case, equal to $\mu(\sv-\eb,\tv)$, $s(\sv-\eb,\tv)$, and $t(\sv-\eb,\tv)$, respectively (recalling that $\mu$ is slightly different in the two cases of $\cA^{\bi}$ and $\cA^{\di}$), and where, we recall, $\delta^{\di}$ is the indicator variable for the digraph case. 
Therefore, denoting by $\apx(\dv')$ the function $\Pr_{\cB_m}(\dv')\correct(\dv')$, 
we get a ``combined goal ratio'' in the two cases which is 
\begin{align}\lab{H}
\frac{\apx(\sv-\ea,\tv)}{\apx(\sv-\eb,\tv)} 
 =\frac{s_a(n+\delta^{\bi} -s_b)}{s_b(n+\delta^{\bi}-s_a)}
 \exp\bigg(\frac{s_b-s_a}{s\ell(1-\mu')}
 \left(1-\frac{\sigma^2(\tv)}{t(1-\mu')}\right)
 +\frac{ \delta^{\di} (t_{a'}-t_{b'})}{sn(1-\mu')}\bigg),  
\end{align}
where $\delta^{\bi} = 1-\delta^{\di}$. 

To analyse the ratios of such nearby sequences in the ``true'' probability space note that, with the above notation, $\Pr_{\cD(\cG)}(\dv)$ in \thref{t:mainbip} is just $|\cG_{\cA}(\dv)|/|\cG|$ where $\cG$ is the random graph space $\cG(\ell,n,m)$ or $\vec\cG(n,m)$. Let us introduce some more notation. 
Let $F\se \cA$, i.e.~a subset of the allowable edges. 
We write $\Num_F(\dv)$ and $\Num^*_F(\dv)$ 
for the number of graphs $G\in \cG_{\cA}(\dv)$ 
that contain, or do {\em not} contain, the edge set $F$, respectively.  (When  $\Num$ and similar notation is used, the set $\cA$ should be clear by context.)
We abbreviate $\Num_F(\dv)$ to $\Num_{ab}(\dv)$   if  $F={\{ab\}}$ (i.e.~contains the single edge $ab$), 
and put $\Num(\dv)=|\cG_{\cA}(\dv)|$. Additionally, for a vertex $a\in V$, we set $\cA(a) =\{v\in V: av \in \cA\}$, and, with $\dv$ understood, we use $\cA^*(a)$ for the set of $v\in \cA(a)$ such that $\Num_{av}(\dv)>0$. 

We pause for a notational comment. In this paper, a subscript  $ab$  is always  interpreted as an ordered pair $(a,b)$ rather than an edge (and similar for triples). This is irrelevant for $\Num_{ab}(\dv) =  \Num_{ba}(\dv)$ since the two ordered pairs signify the same edge, but the distinction is important with other notation such as the following. 
For vertices $a,b \in V$, if $\dv$ is a sequence such that $\dv-\eb$ is $\cA$-realisable,   
we define 
\bel{realRatio}
R_{ab}(\dv)  =\frac{\Num(\dv-\ea)}{\Num(\dv-\eb)} 
\ee
and note that this is exactly $\Pr_{\cD(\cG)}(\dv-\ea)/\Pr_{\cD(\cG)}(\dv-\eb)$. 
Estimating those ``true'' ratios will be tightly linked to estimating the following. 
For  $F\se \cA$, let 
$$P_F(\dv)  =\frac{\Num_{F}(\dv)}{\Num(\dv)},$$ 
which is the probability 
that the edges in $F$ are present in a graph $G$ that 
is drawn uniformly at random from $\cG_{\cA}(\dv)$. 
Of particular interest are the probability of a single edge $av$ and a path $avb$, 
for which we simplify the notation to
\bel{realProb}
P_{av}(\dv) =P_{\{av\}}(\dv),\qquad P_{avb}(\dv) =P_{\{av, bv\}}(\dv).
\ee

The following is~\cite[Lemma 2.2]{lw2018}, used to switch between degree sequences of differing total degree.
\begin{lemma}\thlab{trick17}
Let $a v\in \cA$ and let $\dv$ be a sequence of length $N$. 
Then 
\begin{align*}
\Num_{av}(\dv) 
	&= \Num (\dv - \ea - \ev) -  \Num_{av} ({\bf d} - \ea - \ev)\\  
	&= \begin{cases} 
		\Num ({\bf d} - \ea - \ev) (1- P_{av}(\dv- \ea - \ev)) & \mbox{if } \Num({\bf d} - \ea - \ev) \ne 0\\ 
		0 & \mbox{otherwise.}
	\end{cases}
\end{align*}
\end{lemma}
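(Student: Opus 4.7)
The plan is to establish the first equality by a direct bijection, and then obtain the second equality as a simple rewriting using the definition of $P_{av}$.

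The key observation is this: if $G\in \cG_\cA(\dv)$ contains the edge $av$, then the graph $G-av$ obtained by deleting $av$ is a graph on $V$ all of whose edges are allowable pairs (since we only removed one), vertex $a$ has degree $d_a-1$, vertex $v$ has degree $d_v-1$, and every other vertex has degree unchanged. Thus $G-av\in \cG_\cA(\dv-\ea-\ev)$, and moreover $av\notin E(G-av)$. Conversely, given any $G'\in \cG_\cA(\dv-\ea-\ev)$ with $av\notin E(G')$, the graph $G'+av$ is well-defined (and simple) because $av\in \cA$ is allowable and not already present; one checks that $G'+av \in \cG_\cA(\dv)$ and contains $av$. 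The maps $G\mapsto G-av$ and $G'\mapsto G'+av$ are mutually inverse, giving
\[
\Num_{av}(\dv)\;=\;\Num^*_{av}(\dv-\ea-\ev)\;=\;\Num(\dv-\ea-\ev)-\Num_{av}(\dv-\ea-\ev),
\]
where the last step uses that $\Num^*_F+\Num_F=\Num$ by splitting $\cG_\cA(\dv-\ea-\ev)$ according to whether $av$ is present.

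For the second equality I would split on whether $\Num(\dv-\ea-\ev)$ vanishes. If $\Num(\dv-\ea-\ev)>0$, then $P_{av}(\dv-\ea-\ev)=\Num_{av}(\dv-\ea-\ev)/\Num(\dv-\ea-\ev)$ is well-defined, and factoring $\Num(\dv-\ea-\ev)$ out of the first equality produces $\Num(\dv-\ea-\ev)(1-P_{av}(\dv-\ea-\ev))$. If instead $\Num(\dv-\ea-\ev)=0$, then also $\Num_{av}(\dv-\ea-\ev)=0$ (it is bounded by $\Num(\dv-\ea-\ev)$), so the first equality forces $\Num_{av}(\dv)=0$, matching the stated value.

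There is no real obstacle here: the only mild subtlety is to verify that the edge-addition map lands inside $\cG_\cA$, which uses two separate ingredients, namely $av\in\cA$ (so the new edge is allowable) and $av\notin E(G')$ (so the result remains simple). Both are explicit in the hypothesis of the first equality or in the definition of $\Num^*_{av}$. Everything else is bookkeeping.
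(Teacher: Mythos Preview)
Your proof is correct and is precisely the standard bijective argument: delete/add the edge $av$ to pass between $\cG_\cA(\dv)$ with $av$ present and $\cG_\cA(\dv-\ea-\ev)$ with $av$ absent, then rewrite using the definition of $P_{av}$. The paper does not give its own proof here but simply quotes the lemma from~\cite[Lemma~2.2]{lw2018}; your argument is exactly the expected one underlying that citation.
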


In Lemma 2.3 in~\cite{lw2018} we bound the probability of an edge of a random graph in $\cG(\dv)$ in the graph case. A similar switching argument is used to obtain corresponding bounds in the bipartite and digraph cases. 
Recall that by $\Delta(\dv)$ we denote $\max_i d_i$, and that $M_1(\dv) = \sum_i d_i$. 
   
\begin{lemma}\thlab{l:simpleSwitching}
Let $\cA$ be $\cA^{\bi}$ or $\cA^{\di}$ and let                 
$(\sv,\tv)$ be an $\cA$-realisable sequence.  
Then for any  $av\in\cA$  
we have 
$$
 P_{av}(\sv,\tv)\le \frac{\Delta(\sv)\Delta(\tv)}{M_1(\sv) 
 \left(1- 2(\Delta(\sv)+1)(\Delta(\tv)+1)/M_1(\sv)\right)}. 
$$
\end{lemma}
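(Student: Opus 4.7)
\medskip
\noindent\textbf{Proof plan.}
The plan is to use a standard edge-deletion switching argument to compare the number of $\cA$-realisations of $\dv=(\sv,\tv)$ that contain $av$ with those that do not. Set $\cG_1=\{G\in\cG_\cA(\dv):av\in E(G)\}$ and $\cG_0=\cG_\cA(\dv)\setminus\cG_1$, so that $P_{av}(\sv,\tv)=|\cG_1|/(|\cG_1|+|\cG_0|)$. A switching takes $G\in\cG_1$ together with an auxiliary edge $bw\in E(G)$ with $b\in S\setminus\{a\}$, $w\in T\setminus\{v\}$, $aw,bv\in\cA$, and $aw,bv\notin E(G)$, and replaces $\{av,bw\}$ by $\{aw,bv\}$; the result lies in $\cG_0$ and has the same degree sequence, and all switchings land in $\cG_\cA(\dv)$ because only allowable pairs are used.

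I will then bound the forward count $F(G)$, i.e.\ the number of valid choices of the auxiliary edge $bw$, from below uniformly in $G\in\cG_1$. Starting from the $m-1$ edges of $G$ other than $av$ (where $m=M_1(\sv)$), I will subtract the edges that fail each side condition: at most $s_a-1$ edges incident to $a$, at most $t_v-1$ edges incident to $v$, at most $(s_a-1)(\Delta(\tv)-1)$ edges $bw$ with $aw\in E(G)$, at most $(t_v-1)(\Delta(\sv)-1)$ edges $bw$ with $bv\in E(G)$, and, in the digraph case only, at most $t_{\mate{a}}$ edges with $w=\mate{a}$ and at most $s_{\mate{v}}$ edges with $b=\mate{v}$ (so that $aw$ and $bv$ lie in $\cA^{\di}$). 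Using $s_a\le\Delta(\sv)$, $t_v\le\Delta(\tv)$, these contributions together are at most $2(\Delta(\sv)+1)(\Delta(\tv)+1)$, giving the uniform bound
\[
F(G)\ge M_1(\sv)\left(1-\frac{2(\Delta(\sv)+1)(\Delta(\tv)+1)}{M_1(\sv)}\right).
\]

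For the reverse count $B(G')$ from $G'\in\cG_0$, a preimage under the switching is determined by an ordered pair $(b,w)$ with $b\in N_{G'}(v)\setminus\{a\}$ and $w\in N_{G'}(a)\setminus\{v\}$ (and $bw\notin E(G')$, $bw\in\cA$), whose number is trivially at most $s_a\,t_v\le\Delta(\sv)\Delta(\tv)$. Double-counting the switchings gives $|\cG_1|\cdot F_{\min}\le|\cG_0|\cdot B_{\max}$, hence
\[
P_{av}(\sv,\tv)=\frac{|\cG_1|}{|\cG_1|+|\cG_0|}\le\frac{|\cG_1|}{|\cG_0|}\le\frac{B_{\max}}{F_{\min}},
\]
which is exactly the stated bound (the first inequality uses $|\cG_0|\ge |\cG_1|/\bigl(F_{\min}/B_{\max}\bigr)$ in the form $|\cG_1|/(|\cG_1|+|\cG_0|)\le B_{\max}/(F_{\min}+B_{\max})\le B_{\max}/F_{\min}$; we may assume the denominator is positive, else the inequality is vacuous).

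The only subtlety, and the main place where this argument departs from the standard graph case, is the digraph constraint $w\ne\mate{a}$, $b\ne\mate{v}$, which removes a few extra auxiliary edges; the term $+1$ appearing in $(\Delta(\sv)+1)(\Delta(\tv)+1)$ is there precisely to absorb this extra loss uniformly across both cases $\cA=\cA^{\bi}$ and $\cA=\cA^{\di}$, so that a single statement covers both settings.
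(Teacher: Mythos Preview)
Your proof is correct and follows essentially the same switching argument as the paper: remove $av$ and an auxiliary edge $bw$, insert $aw$ and $bv$, bound the forward count from below by subtracting ineligible choices (neighbours of $a$ or $v$, and in the digraph case the vertices $\mate{a}$ and $\mate{v}$), bound the backward count by $\Delta(\sv)\Delta(\tv)$, and conclude via $P_{av}\le \beta/(1+\beta)\le \beta$. Your accounting of the forbidden auxiliary edges is in fact slightly more careful than the paper's, but both arrive at the same final bound after absorbing slack into $2(\Delta(\sv)+1)(\Delta(\tv)+1)$.
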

\begin{proof} 
 Assume without loss of generality that $a\in S$ which forces $v\in T$ in both the digraph and bipartite cases. 
For each bipartite graph $G$ with degree sequence $(\sv,\tv)$ and an edge joining $a$ and $v$, we can perform a switching (of a type often used previously in graphical enumeration) by removing both $av$ and  another randomly chosen edge $bw$ (with $b\in S$, and $w\in T$), and inserting the edges  $aw$ and $bv$, provided that no multiple edges are formed. Note that the way we choose $b$ and $w$ no loops can occur this way. In the digraph case, we should also make sure that $w\neq\mate{a}$ 
and that $b\neq v'$, 
since the pairs $aa'$ and $vv'$ are not allowable. 
The number of such switchings that can be applied to $G$ with the vertices of each edge ordered, is at least 
$$
M_1(\sv) - (\Delta(\sv) +1)\Delta(\tv)- \Delta(\sv)(\Delta(\tv)+1) 
$$
since there are $M_1(\sv)$ ways to choose $b\in S$ and $w\in T$, 
whereas the number of such choices that are ineligible is at most the number of choices with $b$ being a neighbour of $v$ (which automatically rules out  $b=a$) or $b=\mate{v}$, or similarly for $w$. On the other hand, for each graph $G'$ in which  $av$ is {\em not} an edge, the number of ways that it is created by performing such a switching backwards is at most $\Delta(\sv)\Delta(\tv)$. Counting the set of all possible switchings over all such graphs $G$ and $G'$ two different ways shows that the ratio of the number of graphs with  $av$  to the number without  $av$ is at most
$$
\beta:=\frac{\Delta(\sv)\Delta(\tv)}{M_1(\sv)- 2(\Delta(\sv) +1)(\Delta(\tv)+1) }.
$$
Hence $P_{av}(\dv)\le \beta/(1+\beta)$, and the lemma follows in both cases. 
\end{proof}

\subsection{Proof structure}\lab{s:template}

We recall the template of the method introduced in~\cite{lw2018}. We follow this template in both the sparse and dense cases. 

\noindent
{\bf Step~1.}~Obtain an estimate of the ratio $R_{ab}(\dv)$ between the numbers of graphs of related degree sequences, using the forthcoming Proposition~\ref{l:recurse}. This step is the crux of the whole argument. 

\smallskip
\noindent
{\bf Step 2.} By making suitable definitions, we cause this ratio  to appear  as   the expression $\pr_{\cS'}(\dv)/ \pr_{\cS'}(\dv')$ for some probability space $\cS'$ on an underlying set $\Omega$ in an application of  Lemma~\ref{l:lemmaX}. There, $\Omega$ is the set of degree sequences, with probabilities in $\cS'$ determined by  the random graph under consideration, and the graph $G$ in the lemma has a suitable vertex set $\W$ of such sequences. Each edge of $G$ is in general a  pair of degree sequences   $\dv-\ea$ and $\dv-\eb$ of the form occurring in the definition of $R_{ab}( \dv) $. Having defined $G$, we may call any two such degree sequences {\em adjacent}.  

\smallskip
\noindent
{\bf Step 3.}~Another probability space $\cS$ is defined on $\Omega$, by taking a probability space $\cB$ directly from a joint binomial distribution,   together with a function $\corr{\dv}$ that varies quite slowly, and defining  probabilities in $\cS$ by the equation
$  \pr_\cS(\dv)=    \pr_{\cB }(\dv) \corr{\dv} /\ex_{\cB} \correct$.

\smallskip
\noindent
{\bf Step 4.}~Using sharp concentration results, show that  $P(\W) \approx 1$ in both of the  probability spaces $\cS$ and $\cS'$ (where, by $\approx$, we mean approximately equal to, with some specific error bound in each case). As part of this, we show that $\ex_{\cB } \correct \approx 1 $. At this point, we may specify $\eps_0$ for the application of Lemma~\ref{l:lemmaX}.
 
\smallskip
\noindent
{\bf Step 5.}~Apply Lemma~\ref{l:lemmaX} and the conclusions of the previous steps to deduce   $P_{\cS'}(\dv) \approx P_\cS(\dv)\approx \pr_{\cB }(\dv) \corr{\dv}$. Upon estimating the errors in the approximations, which includes bounding the diameter of the graph $G$, we obtain an estimate for the probability $P_{\cS'}(\dv)$ of the random graph having degree sequence $\dv$ in terms of a known quantity.

\subsection{Realisability}
As in the graph case in~\cite{lw2018}, before estimating how many (bipartite) graphs have degree sequence $\dv$, for preparation we need to know that there is at least one such graph for various $\dv$. Mirsky~\cite[p.~205]{M} gives a necessary and sufficient condition  for the existence of a non-negative integer matrix with row and column sums in specified intervals. For the case that those sums are specified precisely, the statement is the following.
\begin{thm}[Corollary of Mirsky~\cite{M}]\lab{t:mirsky}  Let $0\le r_i$, $0\le c_j$, $m_{ij}\ge 0$ be integers for all $1\le i  \le\ell $, $1\le j\le n$ such that $ \sum_{1\le i\le  \ell  } r_i = \sum_{ 1\le j\le n} c_j$. Then there exists an $ \ell\times n$ integer matrix $B=(b_{ij})$ with row sums $r_1,\ldots, r_\ell $ and column sums $c_1,\ldots, c_n$ such that $0\le b_{ij}\le m_{ij}$ for all such $i$ and $j$ if and only if, for all $X\subseteq \{1,\ldots, \ell\}$ and $Y\subseteq  \{1,\ldots, n\}$,
$$
\sum_{i\in X,\,j\in Y} m_{ij} \ge \sum_{i\in X } r_i - \sum_{ j\notin Y} c_j.
$$
\end{thm}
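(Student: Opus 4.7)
The plan is to derive this feasibility criterion from the integer version of the max-flow min-cut theorem, which is the standard route for transportation-style constraints of this kind. I would first construct an auxiliary directed network $N$ on the vertex set $\{s,t\}\cup\{R_i:1\le i\le\ell\}\cup\{C_j:1\le j\le n\}$, with edges $s\to R_i$ of capacity $r_i$, edges $C_j\to t$ of capacity $c_j$, and edges $R_i\to C_j$ of capacity $m_{ij}$. The matrices $B=(b_{ij})$ satisfying the row sum, column sum and $0\le b_{ij}\le m_{ij}$ constraints correspond bijectively to integer $s$-$t$ flows in $N$ of value $\sum_i r_i=\sum_j c_j$, by setting the flow on $R_i\to C_j$ equal to $b_{ij}$: flow conservation at $R_i$ and $C_j$ encodes the prescribed row and column sums, the capacity bounds $0\le b_{ij}\le m_{ij}$ are built in, and the value $\sum_i r_i$ (which is the maximum possible, being the capacity of the trivial cut $\{s\}$) forces saturation of every edge incident to $s$. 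Integrality of max flow with integer capacities supplies integer $b_{ij}$.

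With this reformulation, the existence of a feasible $B$ is equivalent to every $s$-$t$ cut in $N$ having capacity at least $\sum_i r_i$. I would parametrise cuts by subsets $X\subseteq[\ell]$ and $Y\subseteq[n]$: the cut determined by $A\ni s$, $A\not\ni t$ with $X=\{i:R_i\notin A\}$ and $Y=\{j:C_j\notin A\}$ has forward-crossing edges $s\to R_i$ for $i\in X$, $C_j\to t$ for $j\notin Y$, and $R_i\to C_j$ for $i\notin X,\,j\in Y$, giving cut capacity
$$
\sum_{i\in X} r_i \;+\; \sum_{j\notin Y} c_j \;+\; \sum_{i\notin X,\,j\in Y} m_{ij}.
$$
Demanding this is at least $\sum_i r_i$, cancelling $\sum_{i\in X}r_i$ from both sides, and relabelling $[\ell]\setminus X$ as the new $X$ produces exactly the inequality in the statement. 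As $(X,Y)$ ranges over all pairs of subsets, this single parametrisation handles both directions.

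As a sanity check of the necessity direction, given any feasible $B$ and any $X,Y$, row sums and $b_{ij}\le m_{ij}$ yield
$$
\sum_{i\in X} r_i \;=\; \sum_{i\in X,\,j\in Y} b_{ij} \;+\; \sum_{i\in X,\,j\notin Y} b_{ij} \;\le\; \sum_{i\in X,\,j\in Y} m_{ij} \;+\; \sum_{j\notin Y} c_j,
$$
where the last step bounds $\sum_{i\in X,\,j\notin Y} b_{ij}\le \sum_{i,\,j\notin Y} b_{ij}=\sum_{j\notin Y} c_j$ via the column sum identity. There is no essential obstacle here beyond bookkeeping: the only care needed is in choosing sign conventions when parametrising cuts so that $X$ and $Y$ align with the statement. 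Once that is done, the result drops out of max-flow min-cut.
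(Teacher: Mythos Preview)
Your argument is correct: the network construction is the standard one, the cut parametrisation is accurate, and the relabelling step cleanly recovers the stated inequality. The necessity computation is fine as well.

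However, there is nothing to compare against: the paper does not prove this statement at all. It is quoted as a corollary of a result in Mirsky's book \emph{Transversal Theory} (p.~205) and is used as a black box in the proof of Lemma~\ref{lem:bipRealisable}. Mirsky's original treatment is in the language of transversal/matching theory rather than flows, but your max-flow min-cut derivation is a standard and entirely equivalent route to the same criterion.
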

We   use this to show existence of bipartite graphs with given degrees and forbidden edges for the cases of interest, avoiding maximum generality in order to keep it simple. In order to apply this to digraphs, one would set $\ell= n$ and regard the edges as directed from the first part to the second. For loopless digraphs, we merely forbid all edges of the form $\{i,i+n\}$. Recall that, with $\ell$ and $n$ understood, we set $S=[\ell]$ and $T=[n+\ell]\sm[\ell]$ for convenience.
\begin{lemma}\thlab{lem:bipRealisable}
Given  a constant $C\ge 1$, the following holds for $\ell,n$ sufficiently large and  $\eps>0$   sufficiently small.  Let  $s_a\ge 1$ and $t_v\ge 1$ be integers for all $a \in S$, $v\in T$, with  $m:= \sum_{a \in S} s_a = \sum_{ v\in T } t_v$.  Also let 
$F \se\{av : a\in S,v\in T\}$  be a set of unordered pairs, representing forbidden edges, with no more than $C$ pairs in $F$  containing any $w\in S\cup T$. Let $\Delta_S=\max_{a\in S} s_a $ and $\Delta_T=\max_{v\in T} t_v$. 
Then  there exists a bipartite graph  with bipartition $(S,T)$ with degrees $s_a$ for $a\in S$ and $t_v$ for $v\in T$, and containing no edge in the forbidden set $F$, provided that either of the following holds. 
\begin{enumerate}[label=(\alph*)]
\item  \label{lem:bipRealisablei} We have $m\le \ell n/9$, as well as  $\Delta_S \le 2s$  and  $\Delta_T  \le2t$ where   $s=m/\ell$ and $t=m/n$.
\item \label{lem:bipRealisableii} We have $\Delta_S\le \sqrt  m/2-C$ and  $\Delta_T\le \sqrt  m/2-C$.
\end{enumerate}
\end{lemma}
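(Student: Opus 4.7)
The plan is to apply \thref{t:mirsky}. Take $r_i = s_i$ for $i \in S$, $c_j = t_{j+\ell}$ for $j \in [n]$, and $m_{ij} = 0$ exactly when $\{i, j+\ell\} \in F$, $m_{ij} = 1$ otherwise. Any integer matrix $(b_{ij})$ with $0 \le b_{ij} \le m_{ij}$ and the prescribed row and column sums is automatically $\{0,1\}$-valued and is the biadjacency matrix of the bipartite graph we seek. So the lemma reduces to verifying the condition
$$
|A| \cdot |B| - e_F(A,B) + \sum_{v \in T \sm B} t_v \;\ge\; \sum_{a \in A} s_a \qquad (\star)
$$
for every $A \se S$ and $B \se T$, where $e_F(A,B)$ is the number of pairs in $F$ joining $A$ to $B$. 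The assumption on $F$ gives $e_F(A,B) \le C \min(|A|,|B|)$ and $f_A(v) := |\{a \in A : \{a,v\} \in F\}| \le C$, with $f_B(a)$ defined analogously.

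The central step is to reduce the verification of $(\star)$ to the regime where both $|A|$ and $|B|$ are small. Let $\phi(A,B)$ denote the difference between the two sides of $(\star)$. Adding $v_0 \notin B$ to $B$ changes $\phi$ by $|A| - t_{v_0} - f_A(v_0)$, while adding $a_0 \notin A$ to $A$ changes it by $|B| - s_{a_0} - f_B(a_0)$. Hence at any minimiser $(A^*, B^*)$ of $\phi$, every $v \in B^*$ satisfies $t_v \ge |A^*| - C$ and every $a \in A^*$ satisfies $s_a \ge |B^*| - C$. Consequently, if $|A^*| > \Delta_T + C$ then $B^* = \emptyset$ and $\phi(A^*,\emptyset) = m - \sum_{a \in A^*} s_a \ge 0$; if $|B^*| > \Delta_S + C$ then $A^* = \emptyset$ and $\phi(\emptyset, B^*) \ge 0$. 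So it suffices to treat the case $|A^*| \le \Delta_T + C$ and $|B^*| \le \Delta_S + C$.

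In that regime, the crude bounds $\sum_{A^*} s_a \le |A^*|\Delta_S$, $\sum_{B^*} t_v \le |B^*|\Delta_T$ and $e_F \le C\min(|A^*|,|B^*|)$ combine---after rewriting $(\star)$ as $|A^*||B^*| + m \ge \sum_{A^*} s_a + \sum_{B^*} t_v + e_F$ and using $|A^*||B^*| \ge 0$---into the single inequality
$$
2\Delta_S \Delta_T + 2C(\Delta_S + \Delta_T) + 2C^2 \;\le\; m.
$$
In case~(b), the hypothesis $\Delta_S, \Delta_T \le \sqrt m/2 - C$ yields a left-hand side of at most $m/2$ by direct expansion, and $\sqrt m \ge 2(C+1)$ is guaranteed by $\Delta_T \ge 1$. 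In case~(a), $\Delta_S \le 2m/\ell$, $\Delta_T \le 2m/n$ and $m \le \ell n/9$ give $2\Delta_S \Delta_T \le 8m/9$, while the remaining terms are $O(Cm/\ell + Cm/n + C^2) = o(m)$ for $\ell, n$ sufficiently large.

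The chief obstacle is the marginal-calculus step in the second paragraph: without confining the worst-case $(A^*, B^*)$ to the ``low-degree'' corner, one faces an unpleasant case analysis over $(|A|,|B|)$-regimes. Once that reduction is in place, both cases (a) and (b) follow from the same master inequality, differing only in how one bounds $\Delta_S \Delta_T$.
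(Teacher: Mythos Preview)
Your proof is correct and follows the same overall scheme as the paper's: apply Mirsky's criterion with the same choice of $r_i$, $c_j$, $m_{ij}$, and verify the resulting inequality $(\star)$. The difference lies in how the reduction to small $|A|$, $|B|$ is carried out. The paper treats parts (a) and (b) by separate direct case analyses on the sizes $x=|X|$ and $y=|Y|$: for (a) it disposes of $x\ge 2t+C$ and $y\ge 2s+C$ by crude sum bounds, then handles the remaining ``both small'' case; for (b) it assumes $x\ge y$ and splits on $x\le\sqrt m/2$ versus $x>\sqrt m/2$. Your argument instead looks at a minimiser $(A^*,B^*)$ of $\phi$ and reads off from first-order optimality that either one side is empty (and $\phi\ge 0$ trivially) or $|A^*|\le\Delta_T+C$ and $|B^*|\le\Delta_S+C$. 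This is cleaner in that it avoids the case splits and, more notably, lets you funnel both (a) and (b) through the single master inequality $2\Delta_S\Delta_T+2C(\Delta_S+\Delta_T)+2C^2\le m$, whereas the paper verifies slightly different numerical inequalities in the two parts. The paper's route is marginally more hands-on but equally short; your optimisation viewpoint buys a unified endgame at no extra cost.
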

\begin{proof} 
We will apply Theorem~\ref{t:mirsky} with $m_{ij}=0$ if $\{i,j+\ell\}$ is a forbidden edge, and  $m_{ij}=1$ otherwise, and with $r_i=s_i$ and $c_j=t_{j+\ell}$. Note that 
$ 
\sum_{a\in X,v\in Y} m_{ij}\ge xy-C \min\{x,y\}
$ 
for all subsets $X\se S$, $Y\se T$, where $x=|X|$ and $y=|Y|$. 
We will show that 
for all $X\subseteq S$ and $Y\se T$, with $x=|X|$ and $y=|Y|$, we have
\bel{cond0}
 m-C \min\{x,y\}\ge  \sum_{a\in X} s_a +\sum_{v\in   Y} t_v -xy.
\ee
Equivalently, $ xy-C \min\{x,y\}\ge  \sum_{a\in X} s_a -\sum_{v\in T\sm Y} t_v$. Note that with the previous observation and Theorem~\ref{t:mirsky}, this implies that there is a matrix $B$ which is the adjacency matrix of the desired bipartite graph.

For (a), suppose first that $x\ge 2t+C$. Then using $\sum_{a\in X} s_a\le m$ and $\sum_{v\in   Y} t_v \le y\Delta_T \le 2y t$, we find that the right hand side of~\eqn{cond0} is at most $m-yC$, and~\eqn{cond0} follows. A symmetric argument works if $y\ge 2s+C$. So we may assume that neither of these occur. Then
$$
  \sum_{a\in X} s_a +\sum_{v\in   Y} t_v  \le x\Delta_S+y\Delta_T\le (2t+C)2s+(2s+C)2t
$$
and the left hand side is at least 
$m-C(x+y)/2\ge m- C(s+t)-C^2$.
Thus~\eqn{cond0}  follows if we show
that  $m\ge 8st+  5C(s+t)+C^2$, i.e.\ if $1\ge 8m/\ell n +  5C(1/\ell+1/n) +C^2/m$ (since $s=m/\ell$ and $t=m/n$). This  holds for  $\ell$ and $n$ sufficiently large because  $ n\le  m\le \ell n/9$. 

Now consider (b).
Without loss of generality, since $S$ and $T$ can be interchanged along with $X$ and $Y$, $\ell$ and $n$ etc., we assume $x\ge y$. First consider the case that $x\le \sqrt m / 2$. Then $y\le \sqrt m / 2$, and so  the right hand side of~\eqn{cond0} is  at most 
$$
x\Delta_S  +y\Delta_T   \le m/2-C(x+y),
$$
which implies~\eqn{cond0}. On the other hand, if $  x>\sqrt m / 2$ then we can bound the first summation in~\eqn{cond0} by $m$ and the second one by $y\Delta_T\le y\sqrt m/2-yC$, and use $xy> y\sqrt m/2$
to obtain~\eqn{cond0}.
\end{proof}

%%%%%%%%%%%%%%%%%%%%%%%%%%%%%%%%%%%%%%%%%%%%%%%%%%%%%%%%%%%%%%%%%%%%%%%%%%%%%%%%%%%%%%%%%%%%%%%%%%

\subsection{Recursive relations}\lab{s:recursive}

In this subsection we collect results about recursive relations that were obtained in~\cite{lw2018}. The results were stated for an arbitrary set $\cA$ of allowable pairs in $\binom{V}{2}.$ 
Recall the definitions of the probabilities $P_{av}(\dv)$ and $Y_{avb}(\dv)$ in~\eqref{realProb}, and of the ratio $R_{ab}(\dv)$ in~\eqref{realRatio}.
\begin{proposition}[Proposition 3.1 in \cite{lw2018}]\thlab{l:recurse}
Let $\dv$ be a sequence of length  $n$ and let $\cA\se \binom{[n]}{2}$.
\begin{itemize}
\item[$(a)$] Let $a,v\in V$. If $\Num_{av}(\dv)>0$ 
then 
	\begin{equation*}
	P_{av}(\dv) =d_{v} \Bigg(\sum_{b\in \cA^*(v)}
	 R_{ba} ( \dv- \ev)
	\frac{1-P_{bv}(\dv - \eb - \ev)}
	{1-P_{av}(\dv - \ea - \ev)}\Bigg)^{-1}.
	\end{equation*}
\item[$(b)$] Let $a,b\in V$. If $\dv-\eb$ is $\cA$-realisable then 
	\begin{align} \lab{Ratformula}%\label{eq:ratio}
	 R_{ab} ( \dv) &= \frac{d_{a}}{d_{b}}\cdot \frac{ 1-\Bad(a,b, {\bf d} - \eb)}{ 1-\Bad(b,a, {\bf d} - \ea)}, 
	\end{align}
	where 
	\begin{align}\lab{eq:bad}
	\Bad(i,j, \dw) 
	& = %\frac{P_{xy}(\dv')}{d_x}+
	 \frac{1}{d_{i}}\Bigg(\sum_{ v \in \cA(i)\setminus \cA(j)}  P_{iv}(\dw)   +
	 \sum_{ v \in \cA(i)\cap \cA(j) }  Y_{ivj}(\dw) \Bigg), 
	\end{align}
	provided that  $\Bad(b,a, {\bf d} - \ea) \ne 1$.

\item[$(c)$] Let $a,v,b$ be distinct elements of $V$. If $\dv-\ea-\ev$ is $\cA$-realisable then 
$$Y_{avb}(\dv)=\frac{P_{av}(\dv)\big(P_{bv}(\dv-\ea-\ev)-Y_{avb}(\dv-\ea-\ev)\big)}
	{1-P_{av}(\dv-\ea-\ev)}.$$
\end{itemize}
\end{proposition}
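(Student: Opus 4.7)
The plan is to derive each part from elementary counting identities, using Lemma~\thref{trick17} to move between degree sequences and the definitions of $P_{av}$, $Y_{avb}$ and $R_{ab}$ to translate into the stated ratios. For part~(a), the starting point is the incidence identity $d_v\Num(\dv) = \sum_{b\in\cA^*(v)}\Num_{bv}(\dv)$, obtained by counting pairs (graph in $\cG_\cA(\dv)$, edge at $v$) two ways. Applying Lemma~\thref{trick17} to write $\Num_{bv}(\dv)=\Num(\dv-\eb-\ev)(1-P_{bv}(\dv-\eb-\ev))$ and dividing through by $\Num_{av}(\dv)=\Num(\dv-\ea-\ev)(1-P_{av}(\dv-\ea-\ev))$, the left side becomes $d_v/P_{av}(\dv)$, while each term on the right picks up a factor of $\Num(\dv-\eb-\ev)/\Num(\dv-\ea-\ev)=R_{ba}(\dv-\ev)$. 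Reciprocating gives~(a).

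For part~(b), the plan is a switching-style bijection between admissible pairs $(G,v)$ with $G\in\cG_\cA(\dv-\eb)$ and pairs $(G',v)$ with $G'\in\cG_\cA(\dv-\ea)$, the map being $G\mapsto G-av+bv$, which is valid precisely when $av\in E(G)$, $bv\in\cA$ and $bv\notin E(G)$. For fixed $G$, the number of admissible $v$ equals $d_a$ minus the obstructions: those with $v\in\cA(a)\setminus\cA(b)$ (contributing the $P_{av}$ terms in~\eqref{eq:bad}) and those with $v\in\cA(a)\cap\cA(b)$ but $bv\in E(G)$ (contributing the $Y_{avb}$ terms); the would-be exceptional case $v=b$ is absorbed into the first class since $b\notin\cA(b)$. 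Summing over $G$ produces the total $d_a\Num(\dv-\eb)(1-\Bad(a,b,\dv-\eb))$, and a mirror count from the $\cG_\cA(\dv-\ea)$ side produces $d_b\Num(\dv-\ea)(1-\Bad(b,a,\dv-\ea))$. Equating the two totals and solving for $R_{ab}(\dv)$ yields~\eqref{Ratformula}. This step is the main obstacle, since one must verify that the contributions from the forbidden vertices match exactly the two sums in~\eqref{eq:bad} and that the special cases $v=b$ or $v=a$ are neither double-counted nor missed, for both $\cA=\cA^{\bi}$ and $\cA=\cA^{\di}$.

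Part~(c) follows from two uses of Lemma~\thref{trick17}: deleting the edge $av$ sets up a bijection between graphs in $\cG_\cA(\dv)$ containing both $av$ and $bv$ and graphs in $\cG_\cA(\dv-\ea-\ev)$ containing $bv$ but not $av$, so
$$\Num_{\{av,bv\}}(\dv)=\Num(\dv-\ea-\ev)\bigl(P_{bv}(\dv-\ea-\ev)-Y_{avb}(\dv-\ea-\ev)\bigr).$$
Combining this with the rearrangement $\Num(\dv)=\Num(\dv-\ea-\ev)(1-P_{av}(\dv-\ea-\ev))/P_{av}(\dv)$ of Lemma~\thref{trick17} and dividing gives~(c) immediately.
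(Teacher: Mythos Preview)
Your proof is correct. The paper does not actually prove this proposition; it is quoted verbatim from~\cite{lw2018} as Proposition~3.1 there, so there is no ``paper's own proof'' to compare against. That said, your argument is the natural one and matches the standard derivation: for~(a), the incidence identity $d_v\Num(\dv)=\sum_{b\in\cA^*(v)}\Num_{bv}(\dv)$ combined with Lemma~\ref{trick17} is exactly right; for~(b), the edge-switching double count $G\mapsto G-av+bv$ with the obstruction count giving $\Bad$ is the intended mechanism (and your remark that $v=b$ lands in $\cA(a)\setminus\cA(b)$ since $b\notin\cA(b)$ correctly handles the only subtlety for general $\cA$); for~(c), the deletion bijection plus Lemma~\ref{trick17} is precisely the two-line computation needed. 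Your closing caveat about verifying the cases $\cA=\cA^{\bi}$ and $\cA=\cA^{\di}$ is unnecessary: the proposition is stated and proved for arbitrary $\cA\subseteq\binom{[n]}{2}$, and nothing in the switching argument uses any special structure of $\cA$.
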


%%%%%%%%%%%%%%%%%%%%%%%%%%%%%%%%%%%%%%%%%%%%%%%%%%%%%%%%%%%%%%%%%%%%%%%%%%%%%%%%%%%%%%%%%%%%%%%%%%

These recursive relations 
motivated the definition of operators in~\cite{lw2018} that we restate here.

Let  $\Z^N$  denote the set of non-negative integer sequences  of length $N$.   
For a given integer $N$ and a set $\cA\se \binom{[N]}{2}$ we define $\oA$ to be the set of ordered pairs $(u,v)$ with $\{u,v\}\in \cA$.  Ordered pairs are needed here because, although the functions of interest are symmetric in the sense that the probability of an edge $uv$ is the same as $vu$, our approximations to the probability do not obey this symmetry. 
Similarly, let $\oA_2$ denote the set of ordered triples $(u,v,w)$ with $u$, $v$ and $w$ all distinct and $\{u,v\}$, $\{v,w\} \in \cA$. 

Suppose we are given 
$\pv: \oA\times\Z^N\to\R_{\ge 0}$, 
$\yv: \oA_2\times \Z^N\to\R_{\ge 0}$ and 
$\rv: [N]^2 \times \Z^N\to\R_{\ge 0}$. 
We  write $\pv_{av}(\dv)$ for $\pv(a,v,\dv)$ (where $\dv\in \Z^N$), and remind 
the reader that in this paper, a subscript $av$ always denotes an ordered pair rather than an edge. 
Similarly, we write $\yv_{avb}(\dv)$ for $\yv(a,v,b,\dv)$ and 
$\rv_{ab}(\dv)$ for $\rv(a,b,\dv)$. 
We also define an associated function  $\bad(\pv,\yv)$ as follows. 
For $\dv\in \Z^N $ and $a,b\in [N]$ with $a\neq b$, set 
$\bad(\pv,\yv)(a,a,\dv) = 0$ and 
\begin{equation}\lab{def:bad}
\bad(\pv,\yv)(a, b, \dv) = 
\frac{1}{d_{a}}
\left( 
\sum_{v\in \cA(a)\sm \cA(b)} \pv_{av}(\dv) 
+ \sum_{v\in \cA(a)\cap \cA(b)} \yv_{avb}(\dv) \right).
\end{equation}
We define two operators 
$\Pc(\pv,\rv)$, $\Yc(\pv,\yv)$ and $\Rc(\pv,\yv)$, acting on $\pv$, $\yv$ and $\rv$ as above, as follows. 
For $\dv \in \Z^N$ 
and $a,v,b \in [N]$ 
we set 
\begin{align}
\Pc(\pv,\rv)(a,v,\dv) &= 
d_{v} \left(\sum_{b\in A(v)}
 \rv_{ba} ({\bf d} - \ev) 
\frac{1-\pv_{bv}({\bf d} - \eb - \ev)}
{1-\pv_{av}({\bf d} -\ea - \ev)}\right)^{-1} \text{ for } (a,v)\in \oA,\lab{F1def}\\
\Yc(\pv,\yv)(a,v,b,\dv) &= 
\frac{\pv_{av}(\dv)\big(\pv_{bv}(\dv-\ea-\ev)-\yv_{avb}(\dv-\ea-\ev)\big)}
	{1-\pv_{av}(\dv-\ea-\ev)} \text{ for } (a,v,b)\in \oA_2, \lab{FYdef}\\
\Rc(\pv,\yv)_{ab}(\dv)&= 
\frac{d_{a}}{d_{b}}\cdot \frac{1- \bad(\pv,\yv)(a,b,\dv-\eb)}{1- \bad(\pv,\yv)(b,a, {\bf d} - \ea)}
 \text{ for } a,b\in S \text{ or } a,b\in T. \lab{F2def}
\end{align}	 
 
We observed in~\cite{lw2018} that these operators are ``contractive'' in a certain sense, for particular functions $\pv$ and $\yv$, defined as follows. 
\begin{definition}\thlab{Pi-defn}
Let $\D_0\se \Z^N$ and let $\mu\in\R$. We use $\Pi_{\mu}(\D_0)$ to denote the set of pairs of functions $(\pv,\yv)$ with $\pv:\oA\times \Z^N \to \R_{\ge 0}$ and $\yv:\oA_2\times  \Z^N \to \R_{\ge 0}$ such that for all balanced  $\dv\in \D_0$, we have  
\begin{enumerate}[label={$\mathrm{(}\Pi\mathrm{\alph*}\mathrm{)}$}]
 \item\label{Pi-a}
 $0\le\pv_{av}(\dv)   \le \mu$ for all $ (a,v) \in \oA$, 
 \item\label{Pi-b}
 $\sum_{v\in \cA(a)\cap\cA(b)} \yv_{avb}(\dv)\le  \mu d_a $ for all $a\ne b\in [N]$,  and 
 \item\label{Pi-c}
$0\le \yv_{avb}(\dv)\le     \mu   \pv_{bv}(\dv)$ for all  $ (a,v,b)\in  \oA_2$.
\end{enumerate}
\end{definition}
 
For the next  lemma, we need to adapt~\cite[Lemma 5.2]{lw2018} to the present bipartite setting. 
Recall the definitions of $\cA^{\bi}$ and $\cA^{\di}$, and of $S$ and $T$ and $a'$ in Subsection~\ref{s:notation}. 
In this setting, $\cA(a)\cap\cA(b)\neq\emptyset$ if and only if both $a,b\in S$ or both $a,b\in T$. For such $a,b$ we have $|\cA(a)\sm\cA(b)| \leq 1$. Further note that $(a,v)\in\oA$ if and only if $a\in S$ and $v\in T$ in the bipartite case or $v\in T\sm\{a'\}$ in the digraph case (or $S$ and $T$ swapped), and thus $(a,v,b)\in\oA_2$ if and only if both $a, b\in S$, $a\neq b$ and $v\in T$ (bipartite) or $v\in T\sm\{a',b'\}$ (digraph); or $S$ and $T$ swapped. 

Also, for $\dv=(\sv,\tv)$ of length $\ell+n$, 
we let $Q_r^0(\dv),\, Q_r^S(\dv)\, \se\Z^{\ell+n}$ be the set of balanced and $S$-heavy, respectively, vectors of non-negative integers that have  $L_1$-distance at most $r$ from $\dv$.  
Recall that we use $1\pm \xi$ to denote a quantity between $1-\xi$ and $1+\xi$ inclusively. 
With these definitions, Lemma~5.2 in~\cite{lw2018} specialises to the following. 
 
\begin{lemma}\thlab{l:errorImplication} 
There is a constant $C>0$ such that the following holds. 
Let $\ell, n$ be integers, $N=\ell+n$, and let $\cA$ be either $\cA^{\bi}$ or $\cA^{\di}$. Let $\dv\in\Z^{\ell+n}$ such that $d_a>1$ for all $a\in [N]$. 
Let $0<\xi \le 1$ and $0<\mu_0=\mu_0(\ell,n) <C$. 
Let $(\pv,\yv)$ and $(\pv',\yv')$ be members of $\Pi_{\mu_0}(Q_2^0(\dv))$, 
and let 
 $\rv, \rv':[N]^2\times \Z^n \to \R$. Let $a,b\in S$, $v\in T$.
  \begin{enumerate}[label={(\alph*)}]
\item
If $\dv$ is $S$-heavy, $\pv_{cw}(\dv')=\pv'_{cw}(\dv')(1 \pm\xi )$ for all  $ (c,w)\in \oA$  and all $\dv' \in  Q^0_{1}(\dv)$, 
and $\yv_{cwh}(\dv')=\yv'_{cwh}(\dv')(1 \pm\xi )$ for all $(c,w,h)\in\oA_2$ and all $\dv' \in  Q^0_{1}(\dv)$, then 
$$
\Rc(\pv,\yv)_{ab}(\dv)= \Rc(\pv',\yv')_{ab}(\dv)(1+O\left(\mu_0\xi \right)). 
$$
\item
If $\dv$ is balanced, $v\neq a'$, $\pv_{cv}(\dv')=\pv'_{cv}(\dv')(1 \pm\xi )$ for all $c \in S\sm\{v'\}$ and all $\dv' \in  Q^0_{2}(\dv)$, and $\rv_{ca}(\dv')=\rv_{ca}'(\dv')(1 \pm\mu_0\xi)$ for all $c \in S\sm\{v'\}$ and all $\dv'\in  Q^1_1 (\dv)$,  then  
$$
\Pc(\pv,\rv)_{av}(\dv)=\Pc(\pv',\rv')_{av}(\dv)\left(1+O\left(\mu_0\xi \right)\right). 
$$
\item
If $\dv$ is balanced, $a\neq b$, $v\not\in\{a', b'\}$, 
$\pv_{cv}(\dv')=\pv'_{cv}(\dv')(1 \pm\mu_0\xi )$ for all $c \in S\sm\{v'\}$ and all $\dv' \in  Q^0_{2}(\dv)$, and 
$\yv_{cwh}(\dv')=\yv'_{cwh}(\dv')(1 \pm\xi )$ for all $(c,w,h)\in\oA_2$ and all $\dv' \in  Q^0_{2}(\dv)$, then 
$$
\Yc(\pv,\yv)_{avb}(\dv)=\Yc(\pv',\yv')_{avb}(\dv)\left(1+O\left(\mu_0\xi \right)\right). 
$$
\end{enumerate}
The constants implicit in $O(\cdot)$ are absolute. 
  \end{lemma}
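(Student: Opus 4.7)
The plan is to imitate the proof of Lemma~5.2 in~\cite{lw2018}, with the only substantive change being that the sums indexed over $\cA(\cdot)$ in~\eqref{def:bad}, \eqref{F1def}, \eqref{FYdef} must be treated with the forbidden edges in mind. The key structural observation is that when $a,b$ lie on the same side of the bipartition, $|\cA(a)\sm\cA(b)|\le 1$: in the bipartite case this set is empty, and in the digraph case it consists only of $\mate{b}$. Combined with property~\ref{Pi-a}, the ``off-diagonal'' sum $\sum_{v\in\cA(a)\sm\cA(b)}\pv_{av}$ therefore contributes at most $\mu_0$ to $d_a\,\bad(\pv,\yv)(a,b,\cdot)$, while the $\yv$-sum is bounded by $\mu_0 d_a$ via~\ref{Pi-b}. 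In particular $\bad\le\mu_0+1/d_a$, which is at most an absolute constant strictly less than $1$ once $\mu_0$ is sufficiently small and $d_a>1$, so $1-\bad=\Omega(1)$.

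For part~(a) I would estimate term by term, using $\pv_{cw}=\pv'_{cw}(1\pm\xi)$ and $\yv_{cwh}=\yv'_{cwh}(1\pm\xi)$, that
\[
|d_a\bad(\pv,\yv)(a,b,\dv-\eb)-d_a\bad(\pv',\yv')(a,b,\dv-\eb)| = O(\mu_0\xi\,d_a),
\]
and similarly with $(a,b)$ and $\dv-\eb$ replaced by $(b,a)$ and $\dv-\ea$. Since $1-\bad=\Omega(1)$ in each case,
\[
\frac{1-\bad(\pv,\yv)(a,b,\dv-\eb)}{1-\bad(\pv',\yv')(a,b,\dv-\eb)}=1+O(\mu_0\xi),
\]
and likewise for the denominator of~\eqref{F2def}. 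The prefactors $d_a/d_b$ cancel in the ratio $\Rc(\pv,\yv)_{ab}/\Rc(\pv',\yv')_{ab}$, yielding $1+O(\mu_0\xi)$. The sequences $\dv-\ea$ and $\dv-\eb$ are balanced (because $\dv$ is $S$-heavy and $a,b\in S$) and lie in $Q^0_1(\dv)$, so the hypotheses apply.

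For part~(b) I would look at one summand in~\eqref{F1def} for a fixed $b\in\cA(v)$. By hypothesis $\rv_{ba}=\rv'_{ba}(1\pm\mu_0\xi)$. The two factors $1-\pv_{bv}$ and $1-\pv_{av}$ differ from their primed counterparts by at most $\xi\,\pv_{bv}=O(\mu_0\xi)$ and $\xi\,\pv_{av}=O(\mu_0\xi)$ in absolute terms, hence by $O(\mu_0\xi)$ in relative terms since $1-\pv\ge 1-\mu_0=\Omega(1)$ by~\ref{Pi-a}. Each summand therefore has relative error $O(\mu_0\xi)$, and so does the full sum, its reciprocal, and the product with $d_v$. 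One checks that the sequences $\dv-\ev$, $\dv-\ea-\ev$, $\dv-\eb-\ev$ that arise lie in $Q^1_1(\dv)$ or $Q^0_2(\dv)$ respectively, and that the restriction $v\neq\mate{a}$ ensures all relevant pairs lie in $\oA$.

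For part~(c) the denominator $1-\pv_{av}(\dv-\ea-\ev)$ of~\eqref{FYdef} perturbs by $O(\mu_0^2\xi)$ in absolute terms, hence $O(\mu_0\xi)$ in relative terms, exactly as in~(b); the outer factor $\pv_{av}(\dv)$ perturbs by $O(\mu_0\xi)$ relatively by hypothesis. The delicate piece is the difference $\pv_{bv}-\yv_{avb}$: the absolute errors are at most $\mu_0\xi\,\pv_{bv}$ and $\xi\,\yv_{avb}\le\mu_0\xi\,\pv_{bv}$ (using~\ref{Pi-c}), while~\ref{Pi-c} also gives $\pv_{bv}-\yv_{avb}\ge(1-\mu_0)\pv_{bv}$, a positive multiple of $\pv_{bv}$, so the difference itself carries relative error $O(\mu_0\xi)$. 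Multiplying everything together gives the claim. The main bookkeeping obstacle throughout is not the calculus but verifying, case by case, that the sequences at which the input estimates are invoked lie in the claimed neighbourhoods $Q^0_1(\dv)$, $Q^1_1(\dv)$, $Q^0_2(\dv)$ of $\dv$ and that all relevant ordered pairs lie in $\oA$ or $\oA_2$ once the forbidden pairs in the digraph case have been excluded.
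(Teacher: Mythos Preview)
Your proposal is correct and matches the paper's approach exactly: the paper does not give an independent proof but simply states that this lemma is the specialisation of \cite[Lemma~5.2]{lw2018} to the bipartite setting, after noting (just before the lemma) precisely the structural facts you highlight, namely that $|\cA(a)\sm\cA(b)|\le 1$ when $a,b$ lie on the same side. Your sketch of parts (a)--(c) fills in the details of that specialisation accurately, including the use of~\ref{Pi-a}--\ref{Pi-c} to bound $\bad$ away from $1$ and to control the relative error in $\pv_{bv}-\yv_{avb}$.
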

 %

%%%%%%%%%%%%%%%%%%%%%%%%%%%%%%%%%%%%%%%%%%%%%%%%%%%%%%%%%%%%%%%%%%%%%%%%%%%%%%%%%%%%
% CONCENTRATION
\subsection{Concentration of  random variables}
When $\dv=(\sv,\tv)$ is either the degree sequence of $\cG(\ell,n,m)$ or 
$\cB_m(\ell,n)$ then we need that $\sigma^2(\sv)$, $\sigma^2(\tv)$ and 
$\sigma(\sv,\tv)$ (in the digraph case) are concentrated (to be used in Step 4 of the template given in Subection~\ref{s:template}). 
The following is due to McDiarmid~\cite{McD}, see, e.g., Lemma~4.1 in \cite{lw2018}. 
\begin{lemma}[McDiarmid]
\thlab{l:subsetConc}  Let $c>0$ and let $f$ be a function  defined on the set of   subsets of some set $U$ such that $|f(S)-f(T)|\le c$ whenever $|S|=|T|=m$ and $|S\cap T|=m-1$. Let $S$ be a randomly chosen $m$-subset of $U$. Then for all $\alpha>0$ we have
$$
\pr\left(|f(S)-\ex f(S)| \ge \alpha c\sqrt m  \right) \le 2\exp (- 2\alpha^2).
$$
 \end{lemma}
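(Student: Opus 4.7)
The plan is to apply a bounded-differences concentration inequality to a Doob martingale obtained by one-element-at-a-time exposure of a uniformly random permutation. Realise $S$ as $\{\pi(1),\ldots,\pi(m)\}$ where $\pi$ is a uniformly random permutation of $U$, and define the Doob martingale $X_i = \ex[f(S)\mid \pi(1),\ldots,\pi(i)]$ for $i=0,1,\ldots,m$. Then $X_0=\ex f(S)$ and $X_m=f(S)$, so my task reduces to controlling $|X_m - X_0|$.

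The crucial step is to show that, conditional on $\mathcal{F}_{i-1}=\sigma(\pi(1),\ldots,\pi(i-1))$, the martingale difference $Y_i := X_i - X_{i-1}$ has total range at most $c$, not merely that $|Y_i|\le c$. For this, fix $\pi(1),\ldots,\pi(i-1)$ and consider two candidate values $u,u'$ for $\pi(i)$ drawn from the $N-i+1$ remaining elements. I would couple the continuations $\pi(i+1),\ldots,\pi(N)$ by transposition: starting from any continuation in the case $\pi(i)=u$, the element $u'$ occurs at some later position $j$, and replacing $(\pi(i),\pi(j))=(u,u')$ by $(u',u)$ produces a valid continuation in the case $\pi(i)=u'$. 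The resulting subsets $S,S'$ coincide if $j\le m$, and otherwise differ only by having $u$ and $u'$ swapped between them, so $|S\cap S'|\ge m-1$ in every case; by the Lipschitz hypothesis this gives $|f(S)-f(S')|\le c$. Averaging over the coupled continuation yields the desired range bound on $Y_i$.

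Once this range bound is established, Hoeffding's lemma applied conditionally gives $\ex[e^{sY_i}\mid \mathcal{F}_{i-1}]\le \exp(s^2 c^2 / 8)$ for every real $s$. Iterating over $i=1,\ldots,m$ and applying the standard Chernoff argument produces $\pr(X_m - X_0 \ge t)\le \exp(-2t^2/(mc^2))$, and symmetrically for the lower tail. Substituting $t=\alpha c\sqrt m$ and taking a union bound over the two tails yields $\pr(|f(S)-\ex f(S)|\ge \alpha c\sqrt m)\le 2\exp(-2\alpha^2)$, as required.

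I expect the main obstacle to be the sharp range bound on each martingale difference: it is this refinement (rather than the naive $|Y_i|\le c$, which would cost a factor of $4$ in the exponent) that distinguishes the stated McDiarmid-type bound $\exp(-2\alpha^2)$ from the weaker direct Azuma bound $\exp(-\alpha^2/2)$. The transposition coupling above is what delivers it; the remainder is routine conditional application of Hoeffding's lemma followed by an optimisation in $s$ in the Chernoff step.
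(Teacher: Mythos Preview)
The paper does not prove this lemma at all: it is stated as a quoted result due to McDiarmid, with a pointer to \cite{McD} and to Lemma~4.1 of \cite{lw2018}. Your proposed argument is the standard McDiarmid proof (permutation exposure, transposition coupling to obtain conditional range at most $c$, Hoeffding's lemma, Chernoff) and it is correct as written; in particular your coupling does deliver the sharp range bound needed for the constant $2$ in the exponent. So what you have produced is a valid self-contained proof of a result that the paper merely cites.
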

 The following are the concentration results we need for both the sparse and the medium-dense ranges. 
\begin{lemma}
\thlab{l:sigmaConcBip} 
Let $\ell, n, m$ be integers, let $\dv=(\sv,\tv)$ be a sequence in $\cD(\G(\ell,n,m))$, $\cD(\vec \G(n,m))$, or either of the binomial models   $\cB_m(\ell,n)$ and $\vec\cB_m(n)$, and
let $\savg = m/\ell$ and $\tavg =m/n$, $\mu=\mu(\dv)=m/n(\ell-\delta^{\di})$. Let $a\in S$, $v\in T$. Then the following hold. 
\begin{enumerate}[label={(\roman*)}]

\item For 
	all $\alpha >0$ we have 
	$$\Pr\left(|s_a-\savg|\ge \alpha  \right)\le 2\exp\bigg(-\frac{\alpha^2}{2(\savg+\alpha/3)}\bigg), \ 
	\Pr\left(|t_v-\tavg|\ge \alpha  \right)\le  2\exp\bigg(-\frac{\alpha^2}{2(\tavg+\alpha/3)}\bigg).$$

\item If  $\log^3 n +\log^3 \ell =o(m)$ and $(\log n)/\sqrt n +(\log^{3/2} n)/\sqrt{m}=o(\alpha)$ 
	then $$\pr\left(|\sigma^2(\tv)- \Var\, t_v \ge \alpha \tavg +1/n\right) = o(n^{-\omega }),$$
	where $\Var\, t_v = \tavg (1-\mu)(1-1/n) (1+O(1/n\ell))$; 
	if  $\log^3 n +\log^3 \ell =o(m)$ and $(\log \ell)/\sqrt \ell +(\log^{3/2} \ell)/\sqrt{m}=o(\alpha)$ 
	then 
	$$\pr\left(|\sigma^2(\sv)- \Var\, s_a| \ge \alpha \savg +1/\ell\right) = o(\ell^{-\omega }),$$ 
	where $\Var\, s_a = \savg (1-\mu)(1-1/\ell) (1+O(1/n\ell))$. 
	Furthermore, for  $\dv=(\sv,\tv)$ in $\cD(\vec G(n,m))$ or $\vec\cB_m(n)$, 
 $$
\pr\big(|\sigma(\sv,\tv) - {\bf Covar}( s_a, t_v ) | \ge \alpha \savg +1/n\big) = o(n^{-\omega }),  
$$
 where $ {\bf Covar}( s_a, t_v )    = O( \mu)$.
\end{enumerate}
\end{lemma}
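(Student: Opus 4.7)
The strategy is to prove (i) from standard Bernstein tail bounds for hypergeometric variables, and (ii) by applying McDiarmid's inequality (\thref{l:subsetConc}) to the sample variance viewed as a function of the random $m$-subset of allowable edges, using (i) to control the otherwise-random Lipschitz constant via a truncation argument.

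For (i), observe that in both $\G(\ell,n,m)$ and $\cB_m(\ell,n)$ the marginal law of $s_a$ is exactly the hypergeometric $\mathrm{HG}(\ell n, n, m)$: in the random graph this is immediate, and in $\cB_m$ it follows from the standard fact that the vector of independent binomials $(s_a)_{a\in S}$ conditioned on $\Sigma_1 = m$ yields the balls-in-urns distribution with hypergeometric marginals. The classical Bernstein tail for hypergeometric variables, matching the form given for $\Bin(n,\savg/n)$, gives the first claim in~(i); the $t_v$ bound and the digraph analogue (universe of size $n(n-1)$) are identical.

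For (ii), first notice that $\sigma^2(\tv)$ has the same distribution in $\G(\ell,n,m)$ as in $\cB_m(\ell,n)$, since a short computation shows that $\tv$ has the same probability mass function $\propto \prod_v \binom{\ell}{t_v}$ on vectors summing to $m$ in both models; hence it suffices to work in the random graph. Write $\sigma^2(\tv) = f(E)$ where $E$ is the uniformly random $m$-subset of allowable edges. Since $\tavg = m/n$ is deterministic and $\ex t_v = \tavg$, we have $\ex f = \Var t_v$, and a routine hypergeometric moment calculation gives the stated formula for $\Var t_v$. A direct calculation shows that swapping one edge in $E$ changes $f$ by at most $O(\Delta/n)$ where $\Delta = \max_v|t_v-\tavg|$; the main obstacle is that this Lipschitz bound is itself random.

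I would resolve this by truncation. By part~(i) and a union bound over $v \in T$, the event $\{\Delta > \Delta_0\}$ with $\Delta_0 := C\max\{\sqrt{\tavg\log n},\log n\}$ has probability $o(n^{-\omega})$ for $C$ sufficiently large, so defining $\tilde f$ to equal $f$ on $\{\Delta \le \Delta_0\}$ and $\Var t_v$ otherwise produces a genuinely $O(\Delta_0/n)$-Lipschitz function. Applying \thref{l:subsetConc} to $\tilde f$ with parameter of order $\sqrt{\log n}$ tending to infinity yields a deviation of order $\Delta_0\sqrt{m\log n}/n$ (times a slowly growing factor) with probability $1 - o(n^{-\omega})$; a short case split on whether $\tavg \ge \log n$, combined with the hypotheses $\log^3 n + \log^3 \ell = o(m)$ and $(\log n)/\sqrt n + (\log^{3/2} n)/\sqrt m = o(\alpha)$, then verifies that this deviation is $o(\alpha\tavg + 1/n)$. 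The bound for $\sigma^2(\sv)$ is symmetric in $(S,\ell)\leftrightarrow(T,n)$. Finally, for the covariance in the digraph case, $\ex\sigma(\sv,\tv) = {\bf Covar}(s_a, t_{a'})$ by definition of $\sigma(\sv,\tv)$, and a direct computation on the $n(n-1)$-arc hypergeometric yields ${\bf Covar}(s_a,t_{a'}) = -\tavg/n + O(\tavg/n^2) = O(\mu)$ (and ${\bf Covar} = 0$ trivially in $\vec\cB_m(n)$); the edge-swap Lipschitz constant of $\sigma(\sv,\tv)$ is $O(\Delta/n)$ for the enlarged $\Delta = \max\{\max_a|s_a-\savg|,\max_v|t_v-\tavg|\}$, and the same truncated McDiarmid argument delivers the claim.
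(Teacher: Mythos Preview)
Your overall strategy matches the paper's: Bernstein for (i), McDiarmid on the edge set with a truncation device for (ii), and the multivariate hypergeometric covariance computation for the final claim. The identification of the marginal law of $\tv$ as the same in $\cG(\ell,n,m)$ and $\cB_m(\ell,n)$ is correct and useful.

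However, your truncation step contains a genuine gap. Setting $\tilde f = f$ on $\{\Delta\le\Delta_0\}$ and $\tilde f = \Var t_v$ on the complement does \emph{not} give an $O(\Delta_0/n)$-Lipschitz function. Consider a single edge swap that moves the configuration from $\{\Delta = \Delta_0\}$ to $\{\Delta = \Delta_0+1\}$. On one side $\tilde f = f = \sigma^2(\tv)$, on the other $\tilde f = \Var t_v$; their difference is $|\sigma^2(\tv)-\Var t_v|$, which on the boundary of the good event can be of order $\Delta_0^2$ (or at any rate there is no reason it should be $O(\Delta_0/n)$). So the bounded-differences hypothesis of \thref{l:subsetConc} is not verified for $\tilde f$.

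The paper fixes this by truncating at the level of the summands rather than the whole function: one replaces $(t_v-\tavg)$ by $g(t_v,\tavg):=\sign(t_v-\tavg)\min\{|t_v-\tavg|,\sqrt{x}\}$ for a suitable threshold $x$ (satisfying $\tavg\log n+\log^2 n\ll x\ll \alpha^2 \tavg n/\log n$), and works with $\frac{1}{n}\sum_v g(t_v,\tavg)^2$. Each summand is now bounded and $2\sqrt{x}$-Lipschitz in $t_v$, so a single edge swap alters at most two summands by $O(\sqrt{x})$ each, giving a deterministic bounded-differences constant $c=O(\sqrt{x}/n)$. On the high-probability event $\{\Delta\le\sqrt{x}\}$ the truncated function coincides with $\sigma^2(\tv)$, and the shift in expectation caused by the truncation is negligible since the bad event has probability $o(n^{-\omega})$. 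The same summand-level truncation handles $\sigma(\sv,\tv)$ via $f=\frac{1}{n}\sum_a g(s_a,\savg)g(t_{a'},\tavg)$. If you replace your global truncation by this summand-level one, the rest of your outline goes through and agrees with the paper's proof.
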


\begin{proof}   
The proofs of these concentration results are routine so we just point out the differences compared with the proof of~\cite[Lemma 6.2]{lw2018}. 
Note that $s_a$ is distributed hypergeometrically with parameters $n(\ell-\delta^{\di})$, $m$, $n-\delta^{\di}$ in all four models, and similarly, each $t_v$ is distributed hypergeometrically with parameters $n(\ell-\delta^{\di})$, $m$, $\ell-\delta^{\di}$.  
Hence, the claims in (i), and in (ii) for $\sigma^2(\sv)$ and $\sigma^2(\tv)$, follow from the proof of Lemma 6.2 in~\cite{lw2018} with only trivial adjustments. For $\sigma(\sv,\tv)$, define 
$$
g(a,b)=  \sign(a-b)  \min\{|a-b|,\sqrt x\}, 
$$
where $\sign(y)$ is $1$, $-1$ or 0 if $y$ is positive, negative or 0, respectively, 
and where $x$ is a function that satisfies $s\log n+\log^2n\ll x\ll \alpha^2sn/\log n$ as in the proof of~\cite[Lemma 6.2]{lw2018}. Let $f=\sum_{i=1}^n g(s_i,\savg)g(t_i,\tavg)$, and adapt the rest of the proof of Lemma~6.2 in~\cite{lw2018} in the obvious way. This gives the  required concentration bound for $\sigma(\sv,\tv)$
near its expected value, which is $\frac{1}{n}  \sum_{b\in S} {\bf Covar}(s_b,t_{\mate{b}}) =  {\bf Covar}(s_a, t_{\mate{a}})$. 
On the other hand,  we can bound ${\bf Covar}(s_a,t_{\mate{a}}) $  as follows. 
In $\cD(\vec G(n,m))$, the joint distribution of $(s_a,t_{a'})$ is multivariate hypergeometric, with $m$  edges chosen from $n(n-1)$ positions, and  $s_a$ and 
$t_{\mate{a}}$ are the counts for disjoint subsets of size $n-1$ each. Thus, the well known formula gives
$$
{\bf Covar}(s_a,t_{\mate{a}}) = \frac{m(n-1)^2(n(n-1)-m)}{(n(n-1))^2(n(n-1)-1)}=O(m/n^2), 
$$
which establishes the final claim for  $\cD(\vec G(n,m))$. In $\vec\cB_m(n)$ the random variables $s_a$ and $t_{v}$ are independent since we condition on $M_1(\sv) = m$ and $M_1(\tv)=m$ separately. Thus the covariance is 0. 
\end{proof}

%%%%%%%%%%%%%%%%%%%%%%%%%%%%%%
\section{A formula for sparse digraphs and bipartite graphs} \lab{s:sparseBip}
As mentioned in the introduction, our argument is based on~\cite{lw2018}, and this section in particular has much in common with the corresponding   argument given there for the graph case.
Recall that for a given sequence $\dv=(\dvA,\dvB)$  
we define $\DeltaA=\Delta(\sv)=\max_a(s_a)$, and similarly $\DeltaB=\Delta(\tv) =\max_v(t_v)$. 

\begin{proof}[Proof of \thref{t:sparseCaseBip}]
If $\D=\emptyset$, there is nothing to prove. So fix $\dv^*\in\D$ and define 
$\hatDeltaA =2\DeltaA(\dv^*)+\ell^{\eps/6}$ and 
$\hatDeltaB =2\DeltaB(\dv^*)+n^{\eps/6}$. 
Let $\D^+$ contain all sequences $\dv\in \Z^{\ell+n}$ with $M_1(\dvA)=M_1(\dvB)=m$, $\DeltaA(\dv)\le  \hatDeltaA$ and $\DeltaB(\dv)\le \hatDeltaB$. 
   
For an integer $r\ge 0$, denote by $Q_r^0$ (or $Q_r^S$) the set of all balanced (or $S$-heavy, respectively) sequences in $\Z^{\ell+n}$ that have $L_1$ distance at most $r$ from some sequence in $\D^+$. 
(Recall that we define $\dv=(\dvA,\dvB)$ to be balanced if $M_1(\dvA)=M_1(\dvB)$ and we call it 
$S$-heavy if $M_1(\dvA)=M_1(\dvB)+1$.) 
We start by estimating the ratios of the probabilities of adjacent degree sequences in the random bipartite graph model and the random digraph model, as prescribed by Step 1 of the template in Subsection~\ref{s:template}, using the following. Recall the definition of $R_{ab}$ in~\eqref{realRatio} with $\cA$ being $\cA^{\bi}$ or $\cA^{\di}$.
\begin{claim}\thlab{RSparseBiAndDi}
Uniformly for sequences $\dv=(\dvA,\dvB)\in Q_1^S$ and for $a, b \in S$ 
$$ 
R_{ab}(\dv)=\frac{\dcA_a}{\dcA_b} \left(1 +\frac{(\dcA_a-\dcA_b) M_2 +( \dcB_{\mate{a}}-\dcB_{\mate{b}}) \delta^{\di} M_1 }{M_1^2}  \right) \left(1+O\left(\frac{\hatDeltaA^3\hatDeltaB^3}{\tavg m^2}\right)\right),
$$
where $M_1 = M_1(\dvA) -1 = M_1(\dvB)$, $M_2=M_2(\dvB)=\sum_{v\in T}\dcB_v(\dcB_v-1)$ 
 and $\delta^{\rm di}$ is 0 in the bipartite case and 1 in the digraph case.
 \end{claim}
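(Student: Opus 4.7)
The plan is to apply Proposition~\ref{l:recurse}(b), which gives
$$
R_{ab}(\dv) \;=\; \frac{s_a}{s_b}\cdot\frac{1 - \mathrm{Bad}(a,b,\dv-\eb)}{1 - \mathrm{Bad}(b,a,\dv-\ea)},
$$
and then to estimate the two Bad quantities using sparse two-sided asymptotics for $P_{av}$ and $Y_{avb}$.

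First, I unpack $\mathrm{Bad}(a,b,\dv-\eb)$ for $a,b\in S$. Since $\cA(a)=\cA(b)=T$ in the bipartite case, and $\cA(a)=T\sm\{\mate{a}\}$, $\cA(b)=T\sm\{\mate{b}\}$ (with $a\neq b$) in the digraph case, we have $\cA(a)\sm\cA(b)=\emptyset$ or $\{\mate{b}\}$ respectively, while $\cA(a)\cap\cA(b)$ is $T$ or $T\sm\{\mate{a},\mate{b}\}$. Observing further that $Y_{a\mate{a}b}=Y_{a\mate{b}b}=0$ in the digraph case (both require a forbidden edge), we may uniformly write
$$
\mathrm{Bad}(a,b,\dv-\eb) \;=\; \frac{1}{s_a}\Bigg(\delta^{\di}\,P_{a\mate{b}}(\dv-\eb) \;+\; \sum_{v\in T} Y_{avb}(\dv-\eb)\Bigg),
$$
and symmetrically for $\mathrm{Bad}(b,a,\dv-\ea)$.

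Second, I would establish two-sided sparse asymptotics of the form
$$
P_{av}(\dv) = \frac{s_a t_v}{M_1(\sv)}\bigl(1+O(\hatDeltaA\hatDeltaB/m)\bigr), \qquad Y_{avb}(\dv)=\frac{s_a s_b t_v(t_v-1)}{M_1(\sv)^2}\bigl(1+O(\hatDeltaA\hatDeltaB/m)\bigr),
$$
valid uniformly over all sequences within $L_1$-distance $O(1)$ of the reference sequence $\dv^*$. The first is a two-sided refinement of Lemma~\ref{l:simpleSwitching}, obtained by counting both directions of the single-edge switching. The second uses a ``double switching'' that simultaneously swaps the two edges $av$ and $bv$ with a disjoint pair of edges elsewhere. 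In both cases one must bound the proportion of inverse switchings that would generate multi-edges, and in the digraph case also the forbidden diagonal pairs $\{c,\mate{c}\}$.

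Third, I substitute these into the Bad formula. Because $\dv$ is $S$-heavy, $M_1(\sv-\eb)=M_1(\sv)-1=M_1(\tv)=M_1$; using $\sum_{v\in T}t_v(t_v-1)=M_2$ yields
$$
\mathrm{Bad}(a,b,\dv-\eb) \;=\; \frac{\delta^{\di}\,t_{\mate{b}}}{M_1} \;+\; \frac{(s_b-1)M_2}{M_1^2} \;+\;(\text{error}),
$$
and the symmetric formula for $\mathrm{Bad}(b,a,\dv-\ea)$. Expanding
$$
\frac{1-\mathrm{Bad}(a,b,\dv-\eb)}{1-\mathrm{Bad}(b,a,\dv-\ea)}\;=\;1+\bigl(\mathrm{Bad}(b,a,\dv-\ea)-\mathrm{Bad}(a,b,\dv-\eb)\bigr)+O(\mathrm{Bad}^2),
$$
the first-order terms consolidate into
$$
\frac{(s_a-s_b)M_2 + (t_{\mate{a}}-t_{\mate{b}})\delta^{\di} M_1}{M_1^2},
$$
which is exactly the main factor in the claim. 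All remaining contributions (second-order Bad, switching relative errors multiplied by first-order Bad, and the digraph boundary terms at $v=\mate{a},\mate{b}$) are then bounded and absorbed into the stated multiplicative error $O(\hatDeltaA^3\hatDeltaB^3/(\tavg m^2))$, using $M_2/M_1^2=O(\tavg/m)$ together with $s\le\hatDeltaA$ and $t\le\hatDeltaB$.

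The main obstacle is the two-sided asymptotic for $Y_{avb}$: Lemma~\ref{l:simpleSwitching} only furnishes a one-sided bound on $P_{av}$, so a refined double switching must be executed with care, rigorously accounting for those inverse switchings that create multi-edges or the forbidden diagonal pairs in the digraph case. Once the estimates for $P_{av}$ and $Y_{avb}$ are in hand, the remainder of the proof is elementary bookkeeping of the first-order expansion of the ratio of the two Bad quantities.
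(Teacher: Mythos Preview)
Your overall structure---use Proposition~\ref{l:recurse}(b), evaluate $\Bad$ via estimates for $P_{av}$ and $Y_{avb}$, then expand the ratio---matches the paper exactly, and the bookkeeping in your third and fourth paragraphs is fine. The gap is precisely where you locate it, but it is worse than you indicate: the two-sided estimate $P_{av}(\dv)=s_at_v/M_1\bigl(1+O(\hatDeltaA\hatDeltaB/m)\bigr)$ \emph{cannot} be obtained by ``counting both directions of the single-edge switching''. The forward count is indeed $M_1(1+O(\hatDeltaA\hatDeltaB/m))$, but the reverse count (from a graph without $av$) is $s_at_v$ minus the number of $3$-paths $a\!-\!w\!-\!b\!-\!v$ in that specific graph. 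This quantity is only bounded by $\min(s_a\hatDeltaB,\,t_v\hatDeltaA)$ in the worst case, which can be of the same order as $s_at_v$; the relative error you would actually get is $O(\min(\hatDeltaB/t_v,\hatDeltaA/s_a))$, not $O(\hatDeltaA\hatDeltaB/m)$. The same obstruction, squared, blocks the double switching for $Y_{avb}$.

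The paper sidesteps this by never using a two-sided switching. Instead it bootstraps through Proposition~\ref{l:recurse} itself: the one-sided bounds $P_{av}=O(\hatDeltaA\hatDeltaB/M_1)$ and $Y_{avb}=O((\hatDeltaA\hatDeltaB/M_1)^2)$ (from \thref{l:simpleSwitching} and part~(c)) already give $\Bad=O((\hatDeltaA\hatDeltaB)^2/\tavg M_1)$, hence a crude $R_{ab}=(s_a/s_b)(1+O((\hatDeltaA\hatDeltaB)^2/\tavg M_1))$ via part~(b). Feeding this crude $R$ into part~(a) now yields the refined two-sided estimate $P_{av}=s_at_v/M_1\bigl(1+O((\hatDeltaA\hatDeltaB)^2/\tavg M_1)\bigr)$---with a larger error than you claimed, but one that is actually attainable---and part~(c) then gives the corresponding $Y_{avb}$. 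A second pass through $\Bad$ and part~(b) with these refined estimates produces the claim. So the missing idea is this two-pass use of the recursion, not a sharper switching.
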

\begin{proof} 
 Note that the claim is true when $a=b$ as $R_{aa} = 1$ by definition. Hence, we can assume $a\neq b$ in the remainder of the proof.
First, consider instead $\dv=(\dvA,\dvB)\in\D^+$, let $\ell_1$ and $n_1$ be the number of non-zero coordinates in $\dvA$ and $\dvB$, respectively. 
By definition, $\savg \ell = M_1(\dvA)\le \ell_1\DeltaA(\dv)$.
By assumption we therefore have 
\bel{deltas}
\DeltaA(\dv)\le \hatDeltaA \ll (\savg\ell)^{1 /3} 
\leq \left(\DeltaA(\dv) \ell_1\right)^{1/3},
 \ee
which readily implies that $\DeltaA(\dv) =o(\ell_1^{1/2})$. 
Similarly we find that $\DeltaB(\dv)=o(n_1^{1/2})$. 
Now apply Lemma~\ref{lem:bipRealisable}(ii), 
with $F=\emptyset$ for the bipartite graph case, and $F$ being the set of disallowed edges from $S$ to $T$ in the digraph case, 
to the sequence formed by the non-zero coordinates of $\dv$ to deduce that 
$\Num({\dv})>0$  for $\ell,n$ sufficiently large. 
We can deduce the same conclusion for all $\dv\in Q^0_{8}$, since $\DeltaA(\dv)$, $\DeltaB(\dv)$, $\ell_1$ and $n_1$ can only change by a bounded additive term 
when moving from such $\dv$ to the closest member of $\D^+$. 
Similarly, $M_1(\dvA)=\sum_i \dcA_i=\savg\ell+O(1)$ for all $\dv=(\dvA,\dvB)\in Q^0_{8}$. Thus, all such $\dv$ are $\cA$-realisable and 
$\hatDeltaA=o(M_1(\dvA)^{1/3})$ and $\hatDeltaB=o(M_1(\dvA)^{1/3})$, by~\eqn{deltas}. This and \thref{l:simpleSwitching} imply that 
\bel{Pbound}
P_{av}(\dv)=O(\hatDeltaA\hatDeltaB/M_1(\dvA)) = o(1) \quad  \mbox{for all $\dv\in Q^0_{8},\ av\in \cA$}. 
\ee 
Next consider any distinct $a,b\in S$, $v\in T$  such that $av,bv\in\cA$ (i.e.~$(a,v,b)\in\oA_2$) and $\dv\in Q_6^0$ with $d_a>0$ and $d_v>0$. 
Then $\dv-\ea-\ev \in Q_{8}^0$ and hence $\Num(\dv-\ea-\ev)>0$ from above, 
and also $P_{av}(\dv-\ea-\ev)=O(\hatDeltaA\hatDeltaB/\savg \ell) <1$ using~\eqref{Pbound}. 
Thus, for $\ell,n$ sufficiently large, $\Num_{av}(\dv)>0$ by \thref{trick17}, 
and we have $\Num_{av}(\dv)<\Num(\dv)$ since $P_{av}(\dv)<1$ for similar reasons. This establishes the hypotheses for $\Num_{av}(\dv)$ and $\Num(\dv)$ in \thref{l:recurse} whenever they are needed below. 

It now follows that $Y_{avb}(\dv)=O\left((\hatDeltaA\hatDeltaB/M_1(\dvA))^2\right)$ for $\dv\in Q_6^0$; 
if $d_a$ or $d_v$ is 0 then this is immediate, and otherwise it follows from~\eqn{FYdef}   in view of~\eqref{Pbound}, and noting that the numerator is non-negative by definition. 
Next, definition~\eqn{eq:bad} yields 
$\Bad(a,b,\dv)=O\left((\hatDeltaA\hatDeltaB)^2/\tavg M_1(\dvA)\right)$ 
for all distinct $a,b\in S$ and all $\dv\in Q_{6}^0$ with $d_a>0$. 
(In the current setting 
$|\cA(a)\setminus \cA(b)|$ is 0 and $\cA(a)\cap \cA(b) = T$ for the bipartite case, 
and $\cA(a)\setminus \cA(b) =\{\mate{b}\}$ and $\cA(a)\cap \cA(b) = T\sm\{\mate{a},\mate{b}\}$ for the digraph case.) 
Thus~\eqref{Ratformula} gives $R_{ab}=d_a/d_b(1+O((\hatDeltaA\hatDeltaB)^2/\tavg M_1(\dvA)))$ for all $\dv\in Q_{5}^S$ and all distinct $a,b\in S$ such that $d_a, d_b>0$. 
Now let $\dv\in Q_4^0$ and $v\in T$ with $d_v>0$. 
We want to evaluate $\sum_{b\in\cA^*(v)}d_b$ in \thref{l:recurse}(a) and 
recall that $\cA^*(v)$ is the set of vertices $b$ such that $bv\in \cA$ and $\Num_{bv}(\dv)>0$. 
If $d_b>0$ then $\Num_{bv}(\dv)>0$ as noted above. 
Therefore 
$\sum_{b\in\cA^*(v)}d_b = \sum_{b\in\cA(v)}d_b  = M_1(\dvA)- \delta^{\di}d_{\mate{v}}$ for such $\dv$. Thus \thref{l:recurse} gives 
\bel{Pavsparse}
P_{av}(\dv) =d_ad_v/M_1(\dvA)
	\left(1+O\left((\hatDeltaA\hatDeltaB)^2/\tavg M_1(\dvA)\right)\right)
\ee
for all $\dv\in Q_4^0$, $av\in \cA$ (if $d_a$ and $d_v$ are non-zero, the proposition applies as mentioned above, and if either is 0 then the claim holds trivially). 
Using a similar argument,~\eqn{FYdef} gives 
$$
Y_{avb}(\dv)=\frac{d_a[d_v]_2 d_b}{M_1(\dvA)^2}\left(1+O\left(\frac{(\hatDeltaA\hatDeltaB)^2}{\tavg M_1(\dvA)}\right)\right)
$$
for all $\dv\in Q^0_2$, all $(a,v,b)\in \oA_2$ with $a\in S$,
and where $[x]_2$ denotes the falling factorial $x(x-1)$. 
Next, apply these results to the definition~\eqn{eq:bad} of $\Bad$ for  $\dv\in Q^0_2$ and distinct $a,b\in S$, 
and note that $\cA(a)\setminus \cA(b) = \{b'\}$ for the digraph case and is empty in the bipartite case. 
This  gives the sharper estimate 
$$\Bad(a,b,\dv) =  
		\left(\frac{d_b M_2(\dvB)}{M_1(\dvA)^2} 
		+\delta^{\di} \frac{d_{\mate{b}}}{M_1(\dvA)} 
		\right)
		\left(1+O\left(\frac{\hatDeltaA^2\hatDeltaB^2}{\tavg M_1(\dvA)}\right) \right)
		+O\left(\frac{\hatDeltaB^2 \hatDeltaA}{M_1(\dvA)^2}\right)
$$ 
for  $\dv\in Q^0_2$, which we note is $O(\hatDeltaA\hatDeltaB/M_1(\dvA))$ as 
$M_2(\dvB) \leq \hatDeltaB M_1(\dvB)=\hatDeltaB M_1(\dvA)$. Recalling that $t\le \hatDeltaB(\dv)+2$ let us write $\hatDeltaB^2 \hatDeltaA/M_1(\dvA)^2 =O(\hatDeltaB^3 \hatDeltaA/tM_1(\dvA)^2)$. 
Thus, for all $\dv\in Q_1^S $ and all distinct $a,b\in S$, and noting that $M_2=M_2(\dvB)$ changes by a negligible additive term $O(\hatDeltaB)$ under bounded perturbations of the elements of the sequence $\dv$,
\bel{Req1}
R_{ab}(\dv) =\frac{d_a}{d_b}\frac{\big(1  -  ( d_b-1)M_2/M_1^2 - \delta^{\di}(d_{\mate{b}})/M_1 \big)}
{\big(1 -  ( d_a-1)M_2/M_1^2 -   \delta^{\di}(d_{\mate{a}})/M_1\big)}
\bigg(1+O\bigg( \frac{(\hatDeltaA\hatDeltaB)^3}{tM_1^2}\bigg)\bigg),
\ee
by \thref{l:recurse}(b), which implies the claim since $d_a=\dcA_a$ and $d_{\mate{a}}=\dcB_{\mate{a}}$, and similarly for $d_b$ and $d_{\mate{b}}$. \end{proof}

We  next  make the definitions of probability spaces necessary to apply Lemma~\ref{l:lemmaX} (see Steps 2 and 3 in the template). 
Let $\Omega$ be the underlying set of $\cB_m(\ell,n)$ in the bipartite case and of $\vec\cB_m(n)$ in the digraph case. 
Let 
$$
\W=\left\{ \dv\in \D^+ : \max\{ \sigma^2(\dvA)\ell,\sigma^2(\dvB)n \} \le 2m\right\}
$$ 
 in the bipartite case, and the same but with the additional restriction that 
$$
|\sigma(\dvA,\dvB)|\le 2 s 
$$
in the digraph case. 
Let $ \cS'=\cD(\G(\ell,n,m))$ in the bipartite case, and $ \cS'=\cD(\vec \G(n,m))$ in the digraph case. 
We now turn to define a second probability space $\cS$ on the underlying set $\Omega$ (see Step 3 of the template). 
Recall the definition of $\correct(\dv)$ in~\eqn{corrH}
and let $\apx(\dv) 
=\pr_{\cB_m}(\dv) \correct(\dv)$, where $\cB_m$ is $\cB_m(\ell,n)$ in the bipartite case and $\vec\cB_m(n)$ in the digraph case. 
Now define the probability function in $\cS$ by 
\bel{PS}
\pr_\cS(\dv) = \apx(\dv)/\sum_{ \dv'\in\W}\apx(\dv') = \frac{ \pr_{\cB_m}(\dv) \correct(\dv)}{\ex_{\cB_m} (\mathbbm{1}_{\W}\correct)}
\ee
for $\dv\in \W$, and  $\pr_{\cS}(\dv)=0$ otherwise. 
The graph $G$ has vertex set $\W$ where two vertices are adjacent if they are either of the form $\dv-\ea$, $\dv-\eb$ for some $a$ and $b$ in $S$, or  $\dv-\ev$, $\dv-\ew$ for some $v$ and $w$ in $T$.

We need to estimate the probability of $\W$ in the two probability spaces $\cS$ and $\cS'$ 
(see Step 4 in the template). For convenience, we simultaneously make a similar estimate of   $\pr_{\cB_m}(\W)$ for use outside the present proof. Note that $\pr_{\cS}(\W)=1$ by definition. We combine estimating $\pr_{\cS'}(\W)$ and estimating the expressions $\correct (\dv)$ and $\ex_{\cB_m} (\mathbbm{1}_{\W}\correct)$ in \eqn{PS} for later use. In the following, let $\dv$ be in either of $\cS'$ or $\cB_m$. 
We first claim that $\dv\in \D^+$ with high probability. 
Clearly, $M_1(\dvA)=M_1(\dvB)$ for all such $\dv$ since this is true for any bipartite graph (or digraph) with sequence $(\dvA,\dvB)$ by the definition of $\cB_m$. 
Letting  $\DeltaA^*=\DeltaA(\dv^*)$ and $\DeltaB^*=\DeltaB(\dv^*)$, we have by definition $\hatDeltaA\ge \DeltaA^*+\ell^{\eps/12}\sqrt {\DeltaA^*}\ge \savg+\ell^{\eps/12}\sqrt {\DeltaA^*}$ and similarly $\hatDeltaB\ge \tavg+n^{\eps/12}\sqrt {\DeltaB^*}$. 
Thus, for   $\dv\in  \Omega$, in either $\cS'$ or $\cB_m$,
\begin{align}\lab{aux885}
\pr (\dcA_a > \hatDeltaA)&\le \pr\big(\dcA_a>\savg+ \ell^{\eps/12 }\sqrt {\DeltaA^*}\big) 
=o(\ell^{- \omega}), \text{ and}\\
\pr(\dcB_v > \hatDeltaB)&\le  \pr\big(\dcB_v>\tavg+ n^{\eps/12 }\sqrt {\DeltaB^*}\big) 
=o(n^{- \omega})
\end{align} 
by \thref{l:sigmaConcBip}(i) and noting that $\DeltaA^*,\DeltaB^*\to\infty$. The union bound now gives that, with probability $1- o(n^{- \omega}+\ell^{- \omega})$, $\DeltaA(\dv)\le\hatDeltaA$  and $\DeltaB(\dv)\le\hatDeltaB$ and hence $\dv\in \D^+$, in both $\cS'$ and $\cB_m$. 
We next argue that the $\sigma$-terms are concentrated for $\dv$ chosen in $\cS'$ or $\cB_m$. Let $\alpha = \max\{\log^4\ell/\sqrt{\ell},\log^4n/\sqrt{n}\}$ and note that the conditions  $\log^3n+\log^3\ell=o(m)$, $(\log n)/\sqrt{n}+(\log^{3/2}n)/\sqrt{m}=o(\alpha)$,  and $(\log \ell)/\sqrt{\ell}+(\log^{3/2}\ell)/\sqrt{m}=o(\alpha)$ in \thref{l:sigmaConcBip}(ii) follow from $n/\log^4n + \ell/\log^4\ell=o(m).$ 
Recalling that $\mu=m/n(\ell-\delta^{\di})$ we thus deduce that $\sigma^2(\dvB) = \tavg (1-\mu) \big(1+O(\alpha)\big)$ with probability $1-o(n^{-\omega})$, that $\sigma^2(\dvA)=\savg (1-\mu) \big(1+O(\alpha)\big)$ with probability $1-o(\ell^{-\omega})$, and, in the digraph case, $\sigma(\dvA,\dvB)=O(\mu+s\alpha)$ with probability $1-o(n^{-\omega})$, by \thref{l:sigmaConcBip}(ii).   
This already implies that $\Pr_{\cS'}(\W) =  1-o(n^{-\omega}+ \ell^{- \omega})$, in preparation for applying  Lemma~\ref{l:lemmaX}, and similarly $\Pr_{B_m}(\W) =  1-o(n^{-\omega})$. 

Now note that if $\sigma^2(\dvB)= t (1-\mu)\big(1+O(\alpha)\big)$, 
then the term  $\sigma^2(\dvB)/t(1-\mu)$ in the exponent of $\correct(\dv)$ 
is $1+O(\alpha)$. Similarly for the term  $\sigma^2(\dvA)/s(1-\mu)$. 
Furthermore, the term $\sigma(\dvA,\dvB)/s(1-\mu)$ in the digraph case  is $O(\alpha)$. 
It follows using the strong concentration shown in the previous paragraph that 
\bel{HConc}
\correct(\dv)=1+O(\alpha) \text{ with probability } 1-\bar n^{-\omega} \text{ for $\dv \in \cB_m$}, 
\ee 
where $\bar n=\min\{n,\ell\}$. 
Recall that $\sigma^2(\dvA)\le 2s$, $\sigma^2(\dvB)\le 2t$ and, in the digraph case, $|\sigma(\dvA,\dvB)|\le 2s$ for all $\dv\in \W$. Thus, $\correct(\dv)=\Theta(1)$ for $\dv\in\W$,  using the fact that $\mu < 1/2$, say. 
This and \eqn{HConc} then imply that $\ex_{\cB_m} (\mathbbm{1}_{\W}\correct) = 1+O(\alpha)$. 

To apply \thref{l:lemmaX} (see Step 5 in Section~\ref{s:template}), the final condition we need to show is that the ratios of probabilities satisfy 
\bel{eq:target1}
\frac{\Pr_{\cS'}(\dv-\ea)}{\Pr_{\cS'}(\dv-\eb)} =e^{O(\delta)}\frac{\Pr_{\cS}(\dv-\ea)}{\Pr_{\cS}(\dv-\eb)}
\ee
whenever $\dv-\ea$ and $\dv-\eb$ are elements of $\W$ that are adjacent in the auxiliary graph $G$ defined above, for $\delta=\delta(\hatDeltaA,\hatDeltaB)$ independent of $\dv$ which we specify below, where  the constant implicit in $O()$ is independent of $\dv$ and $\dv^*$. 
Compare the ratio formula 
\begin{align}\label{eq:ratioSparse}
\frac{\Pr_{\cS}(\dv-\ea)}{\Pr_{\cS}(\dv-\eb)} 
= \frac{\apx(\dv-\ea)}{\apx(\dv-\eb)}
\end{align}
in~\eqref{H} with the expression in \thref{RSparseBiAndDi} 
when $a$ and $b$ are in $S$. Then use the identity $n\sigma^2(\tv)=M_2-(t-1)M_1$ and recall that $M_1=\mu'n\ell$   in~\eqref{H} to deduce that 
 \bel{ratios2}
 R_{ab}(\dv) = \frac{\apx(\dvA-\ea,  \dvB)}{\apx(\dvA-\eb, \dvB)} \bigg(1+O\bigg( \frac{\hatDeltaA^3\hatDeltaB^3}{\tavg ^3 n^2}  \bigg)\bigg) 
\ee
whenever $a,b \in S$ and 
$\dv\in Q_1^S$, 
where we use   
$(n+\delta^{\di}-\dcA_b)/(n+\delta^{\di}-\dcA_a)=\exp\big((\dcA_a-\dcA_b)/n+O(\DeltaA^2/n^2)\big)$,  $1/(1-\mu)=1+O(\mu)$, 
$M_2(\dvB)=O(\DeltaB M_1)$ 
and $(\DeltaA\DeltaB)^2/M_1^2\le (\DeltaA\DeltaB)^3/tm^2$, $\DeltaA\le \hatDeltaA$, $\DeltaB\le \hatDeltaB$, and the most significant error term derives from \thref{RSparseBiAndDi}. 
The corresponding statements when $a,b \in T$ follow accordingly (after swapping 
$S\leftrightarrow T$, $\savg\leftrightarrow \tavg$ and $\ell \leftrightarrow n$ in the conclusion of the argument). 
Equation~\eqn{ratios2} now implies that  
$$
\frac{  \pr_{\cS'}(\dv-\ea)}{ \pr_{\cS'}(\dv-\eb) } = R_{ab}(\dv) = e^{O(\delta)}\frac{  \pr_{\cS}(\dv-\ea)}{ \pr_{\cS}(\dv-\eb)}
$$
whenever $\dv-\ea$ and $\dv-\eb$ are adjacent elements of $\W$, 
where we may take 
$ \delta= (\hatDeltaA\hatDeltaB)^3(1/\tavg m^2 + 1/\savg m^2)$ which 
is the error term in~\eqn{ratios2} together with the symmetric error after the swap. 
It is clear that the diameter $r$ of $G$ is at most $M_1 (\dvA) +M_1(\dvB)= 2m$.
Lemma~\ref{l:lemmaX}  then   implies  that $P_{\cS'}(\dv)=e^{O( r\delta+\eps_0)}P_{\cS}(\dv)$ for $\dv\in \W$, where $\eps_0 = 1/n+1/\ell$. 
To proceed from here,  since we found that $\ex_{\cB_m} (\mathbbm{1}_{\W}\correct) = 1+O(\alpha)$, equation~\eqn{PS} implies $P_{\cS}(\dv) = \apx(\dv)(1+O(\alpha))$ for $\dv\in \W$. Hence, 
\bel{generalError}
P_{\cS'}(\dv^*)=e^{O(r\delta+\eps_0+\alpha)}  \apx(\dv^*). 
\ee
Note that $\alpha=O(n^{\eps-1/2}+\ell^{\eps-1/2})$, and  
$$r\delta+\eps_0=  O\left((\hatDeltaA\hatDeltaB)^3\left(\frac{1}{tm}+\frac{1}{sm}\right)\right)
	=  O\left( \DeltaA(\dv^*)^3\DeltaB(\dv^*)^3 (\ell n)^{\eps/2}\left(\frac{1}{tm}+\frac{1}{sm}\right)\right).$$
The theorem for $\dv\in \W$ follows since $\cS'$ is $\cD(\G(\ell,n,m))$ or $\cD(\vec\G(n,m))$, respectively, and by definition of $\apx$. 
On the other hand, to treat the elements of $\D^+\setminus \W$, where some $\sigma$-term may be unbounded, we still have~\eqn{ratios2} holding for all  $\dv\in Q_1^S$.  
For any  $\dv\in \D^+$, there is  a telescoping product of such ratios starting with $\dv'\in \W$, of length at most $2m$, which shows that $\pr_{\cS}(\dv)/\apx(\dv)=  \pr_{\cS}(\dv')/\apx(\dv')(1+O(m\delta))$.   From this,  the required formula for  $\pr_{\cS}(\dv)$ follows for all $\dv\in\D$, in both the bipartite graph and digraph cases. 
\end{proof}

%%%%%%%%%%%%%%%%%%%%%%%%%%%%%%
 
\section{Proof of \thref{t:mainbip,t:bipmodel,t:edgeprobability}} \lab{s:denseBip}

\def\epsV{\varepsilon} 
\def\epsA{\varepsilon}  
\def\sigmaA{\sigma_S}
\def\sigmaB{\sigma_T}

In this section we prove Theorem~\ref{t:mainbip}. 
Following the template in Subsection~\ref{s:template} we first consider the ratio of probabilities of ``adjacent'' degree sequences. 

To estimate those ratios we first present functions that approximate the ratios and the probabilities. We  write these approximations parameterised to facilitate identifying negligible terms. 
We express the formulae for the approximations of $P$, $Y$ and $R$ in both the bipartite graph case and the digraph case simultaneously, again using $\delta^{\di}$ as the indicator variable which is 1 in the digraph case and 0 in the bipartite case. 
For integers $\ell$ and $n$, and a sequence of real numbers $\mu$, $\epsA_a$, etc.,
we define the expressions   
\remove{\ac{There was a discussion here to change to a different parametrisation, using $\mu/t^2$ instead of $1/t\ell$. In the end I decided against it. There are two reasons: For once, this introduces a slight error for the digraphs, and we'd have to argue once again that this is negligible. Second, in the old version the $\sigma_T^2$ term always comes with a factor $1/t \ell$. So in maple, we use a variable $sV$ that stands for $\sigma_T^2/t\ell$. This is important, as $\sigma$ by itself doesn't necessarily go to 0. With this extra factor, saying that the square of this term is negligible is much easier. I hope you don't mind. I thought about for a while when tackling the maple. \\
I DID change: $1/n-1$ to $\mu/s$ in both $\pi$ and $\rho$. This way, the only place where $n$ and $\ell$ appear is together with the $\sigma^2$ terms. That's less to worry about in the worksheet. }}
\begin{eqnarray*}
\pi & = &\mu (1+\epsA_a)(1+\epsV_v)
		\left(1 - \frac{\mu\epsA_a\epsV_v- \epsA_a \sigmaB^2/\tavg \ell  -\epsV_v \sigmaA^2  /\savg n}{1-\mu}
		+ \frac{\delta^{\mathrm{di}}(\epsV_{a'}+\epsA_{v'})\mu}{s}\right),\\
\rho  
	& = &\frac{1+\epsA_a}{1+\epsA_b}\cdot\frac{1-  \mu(1 + \epsA_b)+\mu/s}
	{1- \mu(1+ \epsA_a)+ \mu/s} \\
	&& \times \left(1 
	+\frac{\epsA_a-\epsA_b}
	{(1-\mu)}\left(\frac{\sigmaB^2 }{(1-\mu)t\ell} 
	- \frac{1}{\ell}
	\right)
	+\frac{\delta^{\mathrm{di}}(\epsV_{a'}-\epsV_{b'})\mu}{s(1-\mu)}
	\right).
\end{eqnarray*}

In the calculations below, there are small changes in most of the variables that turn out to have a negligible effect. Changes in the various occurrences of $\eps$-type terms, however, need to be tracked precisely. In particular, we use 
\begin{itemize}
\item 
$\pi(x,y)$ to stand for $\pi$ with  $\epsA_a,\epsV_v$ replaced by $x,y$, 
\item
$\pi(x,y,z,w)$ to stand for $\pi$ with  $\epsA_a,\epsV_v,\epsV_{a'},\epsA_{v'}$ replaced by $x,y,z,w$,
\item 
$\rho(x,y,z,w)$ to stand for $\rho$ with  $\epsA_a,\epsA_b,\epsV_{a'},\epsV_{b'}$ replaced by $x,y,z,w$. 
\end{itemize} 
Recall that we consider  sequences $\dv = (\sv,\tv)$, where $\sv$ and $\tv$ have length $\ell$ and $n$, respectively, and where $\ell = n$ in the digraph case. 
Also $\mu =  \frac12 M_1(\dv) /  |\cA|$, where $|\cA|=(1-\delta^{\mathrm{di}})n\ell+\delta^{\mathrm{di}}n(n-1)$, 
$\savg=(\sum_{a\in S}s_a)/\ell$ and $\tavg=(\sum_{v\in T}t_v)/n$. 
Noting that $S\cap T=\emptyset$, we set $\eps_a=(s_a-\savg)/\savg$ for $a\in S$, $\eps_v=(t_v-\tavg)/\tavg$ for $v\in T$,  
$\sigmaA^2 = \sigma^2(\sv)$, and $\sigmaB^2=\sigma^2(\tv)$.
Then with $*$ standing  for either of $\mathrm{bi}$ or $\mathrm{di}$, we define
$$
P^*=\pi, \quad R^* = \rho,$$ 
$$
Y^*= \pi\cdot \pi(\epsA_b, \epsV_v-1/t, \epsV_{b'},\epsA_{v'})\cdot
\left( 
1+\frac{\mu(1 + \epsA_a)-  \mu^2(1+\epsA_a+\epsA_b)  }{t(1-\mu)}
 \right) ,
$$
so that $P_{av}^{\bi}$ etc.\ are functions of degree sequences. 
For example, the  edge probability function for bipartite graphs with ``classical'' parameters $d_a$ etc is given by 
\begin{eqnarray*} 
P_{av}^{\bi} & = &  \frac{d_ad_v}{m} \left(
	1-\frac{n\ell(d_a-s)(d_v-t)}{m(n\ell-m)}-\frac{(d_a-s)\sigma^2(\tv)n}{ts(n\ell-m)}
	-\frac{(d_v-t)\sigma^2(\sv)\ell}{ts(n\ell-m)}\right)
\end{eqnarray*} 
for $a\in S$, $v\in T$. 
One can compute similar expressions for $P_{av}^{\di}$,  the ratio functions $R_{ab}^{\bi}$ and $R_{ab}^{\di}$, and the 2-path probabilities $Y_{avb}^{\bi}$ and $Y_{avb}^{\di}$. 
 In particular, we remark that $R^*_{ab}(\dv)$ written with ``classical'' parameters is just the expression in~\eqref{H}.
The functions $P^*$,  $Y^*$ and $R^*$ are our ``guessed'' probability and ratio functions and we will show that they approximate the actual  functions sufficiently well. 
The following implies that they are close to fixed points of the operators defined in~(\ref{F1def}--\ref{F2def}). 
 
\begin{lemma}\thlab{l:mapleHigher}
Let $n, \ell$ be integers and let $1/2\le \degspread < 3/5$. 
Let   
$\cA$ be as in either the bipartite or the digraph case,  
let $\dv=(\sv,\tv)$ be a sequence of length $\ell+n$, 
 let $\mu=M_1(\dv)/2|\cA|$, and assume that $\mu<1/4$. 
Furthermore, let $\savg$ and $\tavg$ be the average of $\sv$ and $\tv$, respectively, and  $\eps= \bar d^{\degspread-1}$, where $\bar d = \min\{ \savg, \tavg\}$, 
and assume that 
	$\max_{a\in S} |s_a - \savg|/\savg \leq \eps$, 
	$\max_{v\in T} |t_v - \tavg|/\tavg \leq \eps$. 
Let $*$ stand  for either of $\bi$ or $\di$.   
Then 
\begin{itemize}
\item[(a)] $\Rc(P^*,Y^*) _{ab}( \dv)=  R^*_{ab} ( \dv)
 \left(1 + O\left(\mu\eps^4\right)\right)$ for all $a,b\in S$, 
\item[(b)] 
$\Pc(P^*,R^*)_{av}(\dv) =  P^*_{av}(\dv)
 \left(1 + O(\mu\eps^4)\right)$  for all $a\in S$, 
 $av\in \cA $
\item[(c)] 
$\Yc(P^*,Y^*)_{avb}(\dv) =  Y^*_{avb}(\dv)
 \left(1 + O(\mu\eps^4)\right)$  for all distinct $a,b\in S$, 
 $v\in T $, $av,vb\in\cA$.
\end{itemize}
\end{lemma}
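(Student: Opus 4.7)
The plan is to verify the three identities by direct substitution and Taylor expansion, exactly as was done for the graph case in Lemma~5.1 of~\cite{lw2018}; this is a computational verification rather than a conceptual argument. The guessed quantities $P^*$, $R^*$, $Y^*$ were designed so that, when fed back into the operators $\Pc$, $\Rc$, $\Yc$, the leading terms cancel and only terms of size $O(\mu\eps^4)$ remain. Throughout I would work with $\mu$, the individual $\eps_a$ and $\eps_v$, the normalised variances $\sigmaA^2/(\savg\ell)$ and $\sigmaB^2/(\tavg\ell)$, and (in the digraph case) $\sigma(\sv,\tv)/\savg$, as formal ``small'' parameters, using $\mu<1/4$ to justify $1/(1-\mu)=1+O(\mu)$ and $|\eps_a|,|\eps_v|\le \eps = \bar d^{\degspread-1}=o(1)$ to control higher-order contributions.

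For part (a), I would expand $\Rc(P^*,Y^*)_{ab}(\dv)$ via \eqref{F2def}: this reduces to computing $\bad(a,b,\dv-\eb)$ and $\bad(b,a,\dv-\ea)$ using \eqref{def:bad}. In both cases $\cA(a)\setminus\cA(b)$ is either empty (bipartite) or the single vertex $\mate{b}$ (digraph), and $\cA(a)\cap\cA(b)$ is all of $T$ minus at most two vertices. Substituting $Y^*=\pi\cdot\pi(\eps_b,\eps_v-1/t,\eps_{b'},\eps_{v'})\cdot(1+\cdots)$, the sum over $v\in T$ can be evaluated using $\sum_v \eps_v=0$ and $\sum_v \eps_v^2 = n\sigmaB^2/\tavg^2$, together with the shifts $\mu\to\mu-O(1/|\cA|)$, $\savg\to\savg-1/\ell$, $\eps_b\to\eps_b-1/\savg$, $\sigmaA^2\to\sigmaA^2+O(\eps)$ induced by passing from $\dv$ to $\dv-\eb$. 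Taking the ratio and expanding should match $d_a/d_b$ times the final bracket of $\rho$ up to error $O(\mu\eps^4)$. Part (b) proceeds analogously from \eqref{F1def}: the sum $\sum_{b\in\cA(v)} R^*_{ba}(\dv-\ev)\big(1-P^*_{bv}(\dv-\eb-\ev)\big)$ is evaluated using the same first two moment identities for $\eps_b$ (and, in the digraph case, an analogous sum involving $\eps_{b'}$), and the reciprocal should reproduce $\pi/d_v$. Part (c) is the shortest: directly substitute $P^*$ and $Y^*$ into \eqref{FYdef}, expand the numerator and denominator to order $\mu^2\eps^4$, and check the factor $1+\mu(1+\eps_a)/(t(1-\mu)) - \mu^2\cdots$ emerges.

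The main obstacle is bookkeeping: the required accuracy $O(\mu\eps^4)$ leaves no room to drop second-order cross terms, so the expansions of $\pi$, $\rho$, and the shifts in parameters when $\dv$ is perturbed must all be tracked simultaneously. In view of the commented ``maple'' remark in the source, I would implement the algebra in a computer algebra system, declaring $\mu, \eps_a, \eps_b, \eps_v, \eps_{a'}, \eps_{b'}, \eps_{v'}$, $u_S:=\sigmaA^2/(\savg\ell)$, $u_T:=\sigmaB^2/(\tavg\ell)$ (and, for digraphs, $w:=\sigma(\sv,\tv)/\savg$) as formal indeterminates, substituting the parameter shifts exactly, and truncating at total order $\mu\eps^4$ after using $\sum \eps=0$ and $\sum\eps^2 = n u_T\tavg\ell/\tavg^2$ (etc.) to eliminate the sums. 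As a sanity check, setting $\delta^{\di}=0$ and taking $\ell=n$ with the diagonal identified should recover the corresponding identity from Lemma~5.1 of~\cite{lw2018}, and the two cases $\delta^{\di}\in\{0,1\}$ can be handled in parallel since the extra digraph contributions only enter through the single pair $\mate{b}\in \cA(a)\setminus\cA(b)$ and the two missing vertices in $\cA(a)\cap\cA(b)$, which each produce contributions of the size captured by the $\delta^{\di}$-terms in $\pi$ and $\rho$.
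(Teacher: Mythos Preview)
Your proposal is correct and follows essentially the same route as the paper: direct substitution into the operators, Taylor expansion in the $\eps$-variables, evaluation of the sums over $T$ (resp.\ $S$) via the moment identities $\sum\eps_v=0$ and $\sum\eps_v^2=n\sigmaB^2/\tavg^2$, and bookkeeping with size variables (the paper's $y_1,y_2$) implemented in a computer algebra system. Two small points: the covariance parameter $w=\sigma(\sv,\tv)/\savg$ you introduce does not actually occur in $\pi$ or $\rho$ and can be dropped, and the reference should be to Lemma~7.1 (not 5.1) of~\cite{lw2018}.
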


\begin{proof} 
This follows the proof of~\cite[Lemma 7.1]{lw2018} very closely, with modifications due to the bipartite setting.

We first present some convenient approximations of $P^*$ and $\pi$ for use when their parameters have been slightly altered. Let $\dv'=(\sv',\tv')$ be a sequence  where $\sv'$ and $\tv'$ are of length $\ell$ and $n$, respectively, such that $\dv'$  is at $L_1$ distance $O(1)$ from $\dv$, and with $d_a'=d_a-j_a$ and $d_v'=d_v-j_v$ for some $a\in S$, $v\in T$. 
Here and in the following, the bare symbols $\mu$,  $\epsA_a$ and so on are defined with respect to the original sequence $\dv$, whilst $\mu'$,  $\epsA_a'$, etc., are defined with respect to $\dv'$. For such a sequence $\dv'$ we have that $\mu(\dv')$ is $\mu'= \mu +O(1/ n\ell)$ by definition of $\mu(\dv')$.    
Therefore, the variables $\epsA_a$ and $\epsV_v$ change to 
 $\epsA_a'=\epsA_a-j_a/\savg+O(1/\mu n\ell)$ and $\epsV_v'=\epsV_v-j_v/\tavg+O(1/\mu n\ell)$. (Note that this takes into account that 
$\epsA_a'$ and $\epsV_v'$   are defined with respect to 
$s(\dv')$ and $t(\dv')$.) 
Furthermore, 
$$\sigma^2 (\sv')-\sigma^2 (\sv)=O(( \max_{c\in S} |s_c-\savg| +1 )/ n )=O(\eps \savg/n),$$ 
and similarly, for $\sigma^2 (\tv')$. Hence, by definition of $P^*=\pi$ and the preceding considerations,
\bel{piwithprime2}
P^*_{av}({\dv'})
=\pi( \epsA_a-j_a/\savg,\epsV_v-j_v/\tavg)\big(1+O( 1/\mu n\ell)\big),
\ee
That is, the changes from $\mu$, $s$, $t$, $\sigmaA^2$ and $\sigmaB^2$ to 
$\mu'$, $s'$, $t'$, $(\sigmaA^2)'$ and $(\sigmaB^2)'$ are negligible in the formula for $P^*$.

For (a), we note first that $\Rc( \Pgr,\Ygr)_{aa}(\dv) = 1 = R^*_{aa}(\dv)$ by definition of $\rho$ and $\Rc$ in~\eqref{F2def}. Assume now that $a\neq b$. Using~\eqn{F2def} to evaluate $\Rc( \Pgr,\Ygr)_{ab}(\dv)$, we estimate the expression 
$\bad(a,b,\dv-\eb)=\bad(\Pgr,\Ygr)(a,b,\dv-\eb)$ for which, in turn, we need to estimate 
$\sum Y^*_{avb}(\dv-\eb)$, where the sum is over all $v\in T$  such that both $av$ and $bv$ are allowable (see \eqref{def:bad}). 
By definition and~\eqref{piwithprime2}, 
$$
Y^*_{avb}(\dv-\eb) = \pi \cdot  \pi(\epsA_{b}- 1/\savg,\, \epsV_{v}- 1/\tavg)
	\cdot\left(1+\frac{\mu(1+\epsA_a) - \mu^2(1+\epsA_a+\epsA_b)}{t(1-\mu )} +O( 1/\mu n\ell) \right),
	$$
where we use $\epsA_a$, $\epsA_b$ and $\mu $ in the third factor (rather than the altered versions $\epsA_a'$ etc.)~using the same reasoning as for obtaining~\eqref{piwithprime2}.
Consider expanding this expression for $Y^*_{avb}(\dv-\eb)$ ignoring terms of  order  $\eps^4$, and hence also ignoring those of order $s^{-2}$, $t^{-2}$, $\eps^2/t$, and $\eps^2/s$ since $\eps^2\ge 1/\bar d =\max\{1/t,1/s\}$. 
A convenient way to do this is to make substitutions $\epsA_v=y_1\epsA_v$,   $1/t  = y_1^2/t $,  
$\mu  = y_2 \mu$, $1/n =y_1^2y_2/n$,   
and so on where $y_1$ represents a parameter of size $O(\eps)$ and $y_2$ one of size $O(\mu)$ (for instance, $\sigmaB^2/t\ell$ is $O(\eps^2\mu)$), and then expand about  $y_1=0$ and drop terms of order $y_1^4$.
Since $Y^*_{avb}(\dv-\eb)$ gains a factor  $y_2^2$ via the factors of $\mu$ in $\pi$,  each term containing $y_1^i$ is of order  $y_1^iy_2^2$ and is hence $O(\mu^2\eps^i)$. 
Next, removing the `sizing' variables $y_i$   by setting them equal to 1, and then  expanding the result  about  $(\epsA_{v'},\epsV_v) =(0,0)$  and retaining all terms of total degree at most 3 in $ \epsA_{v'}$ and $ \epsV_v$, we get
 \begin{align*}
Y^*_{avb}(\dv-\eb)
&= c_{0} 
		+ c_{10}\epsV_{v}
		+ c_{01}\epsA_{v'}
		+ c_{20}\epsV_{v}^2
 		+O(\mu^2\eps^4),
\end{align*}
where the functions  $c_{0}$, $c_{10}$, $c_{01}$, and $c_{20}$   
are independent of $\epsV_v$ and $\epsA_{v'}$, with $c_0$, $c_{10}$ and $c_{01}$ linear in $1/\savg$ and $  1/\tavg$, and $c_{20}$ constant in those variables. (By calculation, the third order terms all turn out to be absorbed by the error term.  Furthermore, the relative error $1/\mu \ell n$ in the previous expression for $Y^*$ yields an absolute error $O(\mu/\ell n)=O(\mu^2\eps^4)$ since~$Y^*$ is~$O(\mu^2)$.)  We note that $c_{01}$ has a factor $\delta^{\di}$ since $\epsA_{v'}$ in $\pi$ has such a factor. 
  
Then considering the definition of $\bad(a,b,\dv-\eb)$ in~\eqn{def:bad} we find that the second summation can be written as 
\begin{align*}
\Sigma _{\bad} 
&= \sum_{v\in \cA(a)\cap \cA(b)}Y^*_{avb}(\dv-\eb)\\
&= \sum_{v\in \cA(a)\cap \cA(b)}
		(c_{0} 
		+ c_{10}\epsV_{v}
		+ c_{01}\epsA_{v'}
		+ c_{20}\epsV_{v}^2
		+ O(\mu^2\eps^4)) \\
&= n c_0 +  nc_{20}\sigmaB^2/\tavg^2\\
&\qquad
	+\delta^{\di}
	\left(
	-2c_0 - c_{10}(\epsV_{a'}+\epsV_{b'})
	- c_{01}(\epsA_a+\epsA_b)
	-c_{20}(\epsV_{a'}^2+\epsV_{b'}^2)
	\right) +O(n \mu^2\eps^4),  
\end{align*}
since $a$ and $b$ are distinct elements of $S$ (in which case $\cA(a)\cap\cA(b)$ is $T$ in the bipartite case and is $T\sm\{a',b'\}$ in the digraph case),
and where we also use that $\sum_{v\in T}\epsV_v =0$ and that, in the digraph case, $\sum_{v\in T}\epsA_{v'} = \sum_{a\in S}\epsA_a =0$.  
Noting that   $\cA(a)\setminus \cA(b)$  is $\emptyset$ in the bipartite case 
and consists of just $\mate{b}$ in $T$ in the digraph case,  
we can write $\bad(a,b,\dv-\eb)$ in~\eqn{F2def}, by using~\eqn{def:bad}, as 
\begin{align*}
\bad(a,b,\dv-\eb) &= \frac{1}{d_a} 
	\left(\Sigma_{\bad}
	+\delta^{\di} \pi(\epsA_a,\epsV_{b'},\epsV_{a'},\epsA_b-1/\tavg) +O(1/ \ell n) 
\right), 
\end{align*}
where $d_a =s_a = (1+\epsA_a)\savg$,  and  the $O(1/n\ell)$ term captures the fact that we use $\mu=\mu(\dv)$, $\sigma^2(\sv)$, and $\sigma^2(\tv)$, respectively, instead of $\mu(\dv')$, $\sigma^2(\sv')$, and $\sigma^2(\tv')$, in the formula for $\pi$ applied to an altered sequence $\dv'$.  
The error term $O(1/\ell n)$ here, together with the one from $\Sigma_{\bad}$ above, produce an additive error in $\bad(a,b,\dv-\eb) $ of $O(\mu\eps^4)$ since $n/d_a=n/s_a\sim 1/\mu$ and $1/\ell n<\mu\eps^4$.
Substituting the above expression, stripped of its error  terms, into
\begin{align*}
 \frac{\Rc (P^*,Y^*)_{ab} (\dv)}{\rho} -1
 &= \frac{1}{\rho}\cdot \frac{(1+\epsA_a)(1-\bad(a,b,\dv-\eb))}{(1+\epsA_b)(1-\bad(b,a,\dv-\ea))}  -1
\end{align*}
and simplifying gives a rational function $\widehat F$ which satisfies $\Rc(P^*,Y^*)_{ab}(\dv)/\rho  = 1+ \widehat F + O(\mu\eps^4)$.  After inserting the  size variables $y_1$ and $y_2$  into $\widehat F$ as specified above (and here it may be convenient to use $n=\delta^{\di}+s/\mu$), and simplifying, we find that $\widehat F$ has $y_2$ as a factor (of multiplicity 1), and its denominator is nonzero at $y_1=0$.  Then expanding the expression in powers of $y_1$ shows that  $\widehat F=O( y_1^4)$. Along with the extra factor $y_2$, this implies $\widehat F=O(\mu\eps^4)$. Part (a) follows. 

To prove part (b) let $av\in\cA$ with $a\in S$ and let $\dv'$ be the sequence $\dv -\ev$. 
Note that, analogous to~\eqn{piwithprime2}, 
the differences in the values of $\mu$, $s$, $t$ in $\rho$ between $\dv$ and $\dv'$ are negligible, as are the differences in $\epsA_a$ and $\epsA_b$ 
for $b\in \cA(v) = S\sm\{v'\}$ (note that with these assumptions $v$ is never equal to $a, a', b$ or $b'$; so the values of $\epsA_a$, $\epsA_b$, $\epsA_{a'}$, $\epsA_{b'}$ only change since $\savg$ changes).  Hence, we also have  
$$ R^*_{ab}(\dv')  
=\rho\cdot \left(1 +O\left( 1/ \mu  n\ell  \right)\right)$$
for $a,b\in S$, $v\in \cA(a)\cap \cA(b)$.
Therefore, by definition \eqref{F1def},  
\begin{align}\lab{aux1552}
\Pc( P^*, R^* )_{av}(\dv)  
	&= d_{v} \left(\sum_{b\in  \cA(v)}
		R^*_{ba} (\dv-\ev)
		\frac{1- P^*_{bv}(\dv  - \eb - \ev)}
		{1-P^*_{av}( \dv -\ea - \ev)}\right)^{-1}\nonumber\\
	&= d_{v} \left(\sum_{b\in \cA(v)} 
		\rho (\epsA_b,\epsA_a,\epsV_{b'},\epsV_{a'} ) \cdot 
		\frac{1-\pi\left(\epsA_b- 1/\savg ,\epsV_v- 1/\tavg ,\epsV_{b'},\epsA_{v'}\right)}
		{1-\pi\left(\epsA_a- 1/\savg,\epsV_v- 1/\tavg ,\epsV_{a'},\epsA_{v'}\right)}
		\left(1 +O\left(\frac{1}{ \mu  n\ell }\right)\right)\right)^{-1}. 
\end{align}
By expanding in powers of $\epsA_b$ and $\epsV_{b'}$ 
we obtain 
\begin{align*}
\rho(\epsA_b,\epsA_a,\epsV_{b'},\epsV_{a'}) 
\cdot \frac{1-\pi(\epsA_b- 1/\savg ,\epsV_v- 1/\tavg ,\epsV_{b'},\epsA_{v'})}
			{1-\pi(\epsA_a- 1/\savg ,\epsV_v- 1/\tavg ,\epsV_{a'},\epsA_{v'})} 
			& = K + O(\eps^4) 
\end{align*} 			
where $K$ is a polynomial in $\epsA_b$ and $\epsV_{b'}$ of total degree at most 3 in $\{\epsA_b,\epsV_{b'}\}$.  
Calculations using the  size variables $y_1$ and $y_2$  as above show  that $K = k_{00} + k_{10} \epsA_b + k_{01} \epsV_{b'}  +  k_{20} {\epsA_b}^2 + O(\eps^4)$  for some $k_{ij}$ 
(and in particular, the  coefficients $k_{11}$ and $k_{02}$, and those for terms of third order, are all absorbed by the error terms). 
We also note from the definition of $\pi$ and $\rho$ that $k_{01}$ has $\delta^{\mathrm{di}}$ as a factor. 
Also, for $v\in T$ we have $\cA(v) = S$ in the bipartite case and $\cA(v) = S\sm\{\mate{v}\}$ in the digraph cases. Then the main summation over~$b$  in~\eqn{aux1552}  can be evaluated as 
\begin{align*}
\ell\cdot k_{00} +  \ell \sigmaA^2 k_{20}/\savg^2
	+\delta^{\di} \left( - k_{00} - k_{10}\epsA_{v'} -  k_{01}\epsV_v - k_{20} {\epsA_{v'}}^2\right),  
\end{align*}
with relative error $O(\eps^4)$, noting  
that $K$ has constant order, 
where we use that $\sum_{b\in S} \epsA_b = 0$ and that $\sum_{b\in S} {\epsA_b}^2 = \ell \sigmaA^2/{\savg}^2$.  
Using the size variables $y_1$ and $y_2$ as described above, we then find  that  $\Pc(P^*,R^*)_{av}(\dv) = \pi (1+O(\mu\eps^4))$, with the extra factor $\mu$ arising in the error term in the same way as for $\Rc$ in part (a). Part (b) follows. 

Part (c) is more straightforward than the first two parts and is easily verified by similar considerations, so we omit details.
\end{proof}

\begin{proof}[Proof of \thref{t:mainbip}]
Let $\mu_0$, $\degspread$, $\ell$, $n$, $m$, $\D$ be as in the theorem statement. Note that all sequences $\dv$ in $\D$ have the same values 
$\mu = \mu(\dv)=m/n(\ell-\delta^{\di})$, $\savg = \savg(\dv) = m/\ell$ and $\tavg = \tavg(\dv) = m/n$. All other sequences $\dv'$ in this proof will be close enough to $\D$ (in Hamming distance)  that $\mu':=\mu(\dv') \sim \mu$. 
Let $\eps= \max\{s^{\degspread-1},t^{\degspread-1}\}$ and note that 
$\eps\geq \max\{1/\sqrt{\savg}, 1/\sqrt{\tavg}\}$ by the lower bound on $\degspread$. 
	
We follow the template from Subsection~\ref{s:template},   first considering the ratios $R_{ab}$ for $a,b\in S$. 
Recall that $P$, $Y$ and $R$ denote the {\em actual} edge probability, path probability and ratio functions (c.f.~\eqref{realRatio} and \eqref{realProb}) and that $P^*$, $Y^*$ and $R^*$ are functions of degree sequences with closed form given at the beginning of this section. 
Recall that $P$, $Y$ and $R$ are always defined with respect to some underlying set $\cA$ which is either $\cA^{\bi}$ or $\cA^{\di}$ here. 
The next claim states that the functions $P^*$ and $R^*$ approximate $P$ and $R$ sufficiently well.  An analogous statement  for $Y^*$ and $Y$ appears in the proof of the claim but is not needed elsewhere.  
It is easy to see inductively that we only require $R_{ab}(\dv)$ when $a$ and $b$ are in the same set of the bipartition. 
Let $Q_1^{S}$ denote the set of sequences $\dv=(\sv,\tv)\in \Z^{\ell+n}$ such that $\dv-\ea\in \D$ for some $a\in S$; and let $Q_1^{T}$ denote the set of sequences $\dv=(\sv,\tv)\in \Z^{\ell+n}$ such that $\dv-\ev\in \D$ for some $v\in T$. 

\begin{claim}\thlab{RDenseBip}
Let $*$ be either $\bi$ or $\di$. 
For $\dv\in \D$, $av\in\cA$, 
\bee\lab{appP}
P_{av}(\dv) = P_{av}^*(\dv) (1+O(\mu\eps^4)),
\ee
and uniformly for all $\dv\in Q_1^S$ and all $a,b\in S$ 
\bee\lab{appR}
R_{ab}(\dv) = R_{ab}^*(\dv) (1+O(\mu\eps^4)).
\ee
\end{claim}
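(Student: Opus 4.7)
The plan is to adapt the contraction-mapping template of \cite{lw2018} to the present bipartite/digraph setting, combining the approximate fixed-point property established in \thref{l:mapleHigher} with the Lipschitz bound of \thref{l:errorImplication}. The strategy is iterative: starting from the closed-form guesses $(P^{*},Y^{*},R^{*})$, I would construct successive approximations $(\pv^{(k)},\yv^{(k)},\rv^{(k)})$ to the true functions $(P,Y,R)$ and show that their relative error contracts geometrically with ratio $O(\mu)$, all the way down to the target bound $O(\mu\eps^{4})$.

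First I would fix a radius $r_{0}=C\log n$ and let $\D^{+}$ be the set of balanced or $S$-heavy sequences at $L_{1}$-distance at most $r_{0}$ from $\D$. Using \thref{lem:bipRealisable}(a), together with the hypotheses $m/(n\ell)<\mu_{0}$ and $|s_{a}-\savg|\le\savg^{\degspread}$, $|t_{v}-\tavg|\le\tavg^{\degspread}$ (which are preserved up to an additive $O(\log n)$ under $L_{1}$-perturbation, thanks to $\ell\log^{K}n+n\log^{K}\ell=o(m)$), every $\dv\in\D^{+}$ is $\cA$-realisable. A direct inspection of the closed forms $\pi$ and $Y^{*}$ shows that $(P^{*},Y^{*})\in\Pi_{2\mu}(\D^{+})$ for $\mu$ small enough, so \thref{l:errorImplication} will be applicable with $\mu_{0}=2\mu$. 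For the base case I would use \thref{l:simpleSwitching} to obtain the crude bound $P_{av}(\dv')=O(\mu)$ uniformly on $\D^{+}$; Proposition~\ref{l:recurse}(b),(c) then yields the crude estimates $Y_{avb}=O(\mu^{2})$ and $R_{ab}=(d_{a}/d_{b})(1+O(\mu))$, so $\xi_{0}:=\sup |\pv^{(0)}/P^{*}-1|=O(1)$ and likewise for $Y$ and $R$.

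The inductive step is the heart of the argument. Assuming $P=P^{*}(1\pm\xi_{k})$, $Y=Y^{*}(1\pm\xi_{k})$, $R=R^{*}(1\pm\xi_{k})$ on the shrunk subdomain $\D_{k}^{+}=\{\dv\in\D^{+}:\mathrm{dist}_{L_{1}}(\dv,\D)\le r_{0}-k\}$, I would use Proposition~\ref{l:recurse} to express the true $P$, $Y$, $R$ as operator images $\Pc(P,R)$, $\Yc(P,Y)$, $\Rc(P,Y)$; apply \thref{l:errorImplication} to transport the input error, gaining a contraction factor $\mu_{0}=O(\mu)$; and finally invoke \thref{l:mapleHigher}, which supplies the ``source'' error $O(\mu\eps^{4})$ incurred when the same operators are applied to the starred functions themselves. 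Together these yield the recurrence
\[
\xi_{k+1}\;\le\;C\mu\,\xi_{k}+C'\mu\eps^{4}
\]
on $\D_{k+1}^{+}$. Iterating $k\asymp\log(1/(\mu\eps^{4}))$ times collapses $\xi_{k}$ to $O(\mu\eps^{4})$ on a domain still containing $\D$, producing both the estimate for $P_{av}(\dv)$ on $\D$ and the estimate for $R_{ab}(\dv)$ on $Q_{1}^{S}$ claimed in \thref{RDenseBip}.

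The hard part will be the bookkeeping of domain shrinkage and verifying that $(P^{*},Y^{*})\in\Pi_{2\mu}(\D_{k}^{+})$ remains valid throughout the iteration. Each step of the recursion consumes $O(1)$ of the $L_{1}$-budget because Proposition~\ref{l:recurse} involves the shifted sequences $\dv-\ea-\ev$ and \thref{l:errorImplication} demands that the hypotheses hold on a full $O(1)$-neighbourhood of $\dv$. Since the contraction ratio is $O(\mu)$ rather than a mere constant less than $1$, only $O(\log n)$ iterations are needed, which is comfortably accommodated by $r_{0}=C\log n$; and the tightness bounds $|s_{a}-\savg|\le\savg^{\degspread}$, $|t_{v}-\tavg|\le\tavg^{\degspread}$ are essentially preserved throughout $\D^{+}$ because the perturbation $O(\log n)$ is negligible compared with $\savg^{\degspread}$ and $\tavg^{\degspread}$ under the degree-growth hypotheses of \thref{t:mainbip}.
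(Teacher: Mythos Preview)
Your proposal is correct and follows essentially the same contraction-mapping approach as the paper: establish membership in $\Pi_{\mu_1}$ via \thref{l:simpleSwitching} and \thref{lem:bipRealisable}, then iterate the operators $\Rc,\Pc,\Yc$ using \thref{l:errorImplication} for the contraction factor $O(\mu)$ and \thref{l:mapleHigher} for the source term $O(\mu\eps^4)$, with $O(\log n)$ rounds of domain shrinkage. The paper organises the iteration slightly differently---tracking $\Cc^{k}(P,Y)$ versus $\Cc^{k}(P^{*},Y^{*})$ rather than the error $\xi_{k}$ directly---but the substance is identical; just be sure to verify explicitly that the true pair $(P,Y)$, not only $(P^{*},Y^{*})$, lies in $\Pi_{\mu_1}$, since \thref{l:errorImplication} requires this of both inputs.
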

By symmetry, \eqref{appR} also holds for all $\dv\in Q_1^T$ and all distinct $a,b\in T$. 
 The proof is very similar to the proof of Claim~6.4 in~\cite{lw2018} with some adaptations to the bipartite setting.   We include a full proof for the sake of completeness. 

\begin{proof}[Proof of the claim.]
To show that $P$ and $P^*$ (and $R$ and $R^*$) are $(\mu\eps^4)$-close  in the sense of~\eqref{appP} and~\eqref{appR}, we define the compositional operator  
$$\Cc(\pv,\yv)= \big(\hat\pv, \Yc(\hat\pv, \yv)\big),\ \mbox{where\, $\hat\pv = \Pc(\pv,\Rc(\pv,\yv))$}.$$
  We first observe that $\cC$ fixes $(P,Y)$,  where in this context  we regard $P$ to be the function $\pv$ with $\pv_{ av}=P_{av} $ for all $av\in \cA$, and similarly $Y$ to be $\yv$ with $\yv_{avb}= Y_{avb}$, by \thref{l:recurse}. We will deduce a certain contraction property of $\cC$ by applying \thref{l:errorImplication}(a--c) one after the other, and then 
show that for any integer $k>0$, $\cC^{k}(\Pgr,\Ygr)$ and $\cC^{k}(P,Y)$ are $O(\mu)^{k }$-close.   We will also show that $\Pgr$ and $\cC^{k}(\Pgr)$ are $O(\mu\eps^4)$-close. These observations will then be shown to imply \thref{RDenseBip}.

Fix $k_0 = 4\log n$ and $r=4k_0+4=O(\log n)$.   
Let $\Omega^{(0)}$ be the set of sequences $\dv' \in \Z^n$ that are at $L_1$ distance at most $r$ from a sequence in $\D$.  Let $ \mu_1=5\mu$, and  define $\Omega^{(s)}$ to be the set of sequences $\dv'\in\Omega^{(0)}$ of $L_1$ distance at least $s+1$ from all sequences outside   $\Omega^{(0)}$. 

Towards applying \thref{l:errorImplication} we first establish that $(P,Y)$ and $(\Pgr,\Ygr)$ are elements of $\Pi_{ \mu_1}(\Omega^{(2)})$ (see \thref{Pi-defn}).
 Note that for $\dv'\in \Omega^{(0)}$, the values of  $\savg(\dv')$, $\tavg(\dv')$  and $\mu(\dv')$ are asymptotically equal to  $\savg$, $\tavg$  and $\mu$, respectively, since $M_1(\dv')=M_1(\dv_0)+O( \log n)$ for some sequence $\dv_0\in \D$. Thus, $\mu$ and  $\mu(\dv')$ are interchangeable in the error terms below. Furthermore, we note that the bounds on $s_a$ and $t_v$ in the theorem statement imply that for all balanced $\dv \in \Omega^{(0)}$, $s_a\sim s$ and $t_v\sim t$ uniformly for all $a\in S$ and $v\in T$. By \thref{lem:bipRealisable}\eqref{lem:bipRealisablei} we obtain  $\cN(\dv)>0$ for all balanced $\dv\in \Omega^{(0)}$ in both cases  $\cA^{\bi}$ and $\cA^{\di}$. In doing so, we may take $C=1$ and $F$ to be either empty in the bipartite case, or a matching in the digraph case; the condition $m\le \ell n/9$ 
 follows from choosing $\mu_0$ small enough, and the conditions on $\Delta_S$ and $\Delta_T$  can be seen to follow from $|s_a-\savg|\le \savg^{\degspread}$ for all $a\in S$, 
 $|t_v-\tavg|\le \tavg^{\degspread}$ for all $v\in T$ and $\degspread <1$, say.
After this, for $n$  and $\ell$  sufficiently large,~\thref{l:simpleSwitching}, together with 
the facts that  $s_a\sim s$ and $t_v\sim t$ uniformly for all $a\in S$, $v\in T$,  implies that for all $av\in \cA$ 
\bel{Pbound2}
P_{av}(\dv)\leq\frac{\mu}{1- 2\mu}(1+o(1)) <  \frac{5\mu}{4} 
\quad  \mbox{for all balanced $\dv\in \Omega^{(0)}$},
\ee 
where for the last inequality we use that $\mu< \mu_0  <  1/11$, say.  
Since $\Omega^{(2)}\se \Omega^{(0)}$ and $5\mu/4 <\mu_1$ this establishes requirement~\ref{Pi-a} for $P$ in the definition of $\Pi_{\mu_1}(\Omega^{(2)})$.
Now restrict slightly to $\dv \in \Omega^{(2)}$. 
 By definition $Y_{avb}(\dv)$ is the probability that both edges $av$ and $bv$ are present.
Hence  \thref{l:recurse}(c) implies (with the above bounds on $P_{av}(\dv)$ applying for all $\dv\in \Omega^{(0)}$) that $0\le Y_{avb}(\dv) =Y_{bva}(\dv) \le  3\mu P_{bv}(\dv)/2$ 
  (easily) assuming, as we may, that $\mu$ is sufficiently small. Thus $(P,Y)$ 
 satisfies condition~\ref{Pi-c} for membership of $\Pi_{\mu_1}(\Omega^{(2)})$, and also
 $$
 \sum_{v\in  \cA(a)\cap \cA(b)} Y_{avb}(\dv) \le  \sum_{v\in  \cA(a)\cap \cA(b)}\frac{3P_{bv}(\dv)\mu}{2}\le 3 \mu s_a(1+o(1)) 
 $$
  for all distinct $a,b\in S$, 
using $P_{bv}(\dv)\le 2\mu\sim 2s_a/n$
 which follows from \eqref{Pbound2} and recalling that $s_a\sim \mu n$ uniformly for all $a\in S$ for all $\dv \in \Omega^{(0)}$.  As  $4\mu < \mu_1$, this shows $Y$ satisfies condition~\ref{Pi-b} for membership of $\Pi_{\mu_1}(\Omega^{(2)})$ when $n$ is sufficiently large  and $a,b\in S$ are distinct. The equivalent statement for both $a,b\in T$ follows analogously. Note that this covers all cases of~\ref{Pi-b} as otherwise $\cA(a)\cap \cA(b)=\emptyset$ and the statement is trivial.
To see that $(\Pgr,\Ygr)$ is also in $\Pi_{ \mu_1}(\Omega^{(2)})$,   
we first observe that $P^*_{av}(\dv)\sim \mu$ for all $av\in \cA$ and all $\dv\in \Omega^{(0)}$. This is because $\eps\to 0$ since $\degspread <1$ and both $s,t\to\infty$, and because $\sigmaB^2/t\ell, \sigmaA^2/sn=O(\eps^2\mu)$.  
Properties~\ref{Pi-a}-\ref{Pi-c} follow directly from this fact and the definition of $Y^*$  since $\mu=\mu_1/5$.

Now for large $n$  and $\ell$  and for $(a,v)\in \oA$ we have $P_{av}(\dv)=P^*_{av}(\dv)(1\pm1)$ for all balanced $\dv\in \Omega^{(0)}$ since $P^*_{av}(\dv)\sim \mu$ and by~\eqref{Pbound2}.  Also,  $0\le Y_{avb}(\dv)\le 3\mu P_{bv}(\dv)/2$ implies  $Y_{avb}(\dv)=Y^*_{avb}(\dv)(1\pm1)$ for balanced $\dv\in \Omega^{(2)}$.  
We may now apply \thref{l:errorImplication}(a) with $\xi=1$ for any $S$-heavy $\dv \in \Omega^{(3)}$  to deduce that
$$
\Rc(P ,Y ) _{ab}(\dv)= \Rc(\Pgr ,\Ygr )_{ab}(\dv)(1+O (\mu_1))
$$
for all  $a,b\in S$ (noting that for $a=b$ the claim is trivial, and for distinct $a,b$ we use the previous observations).  
Writing $\rv$ for $\Rc(P ,Y )$ and $\rv'$ for $\Rc(\Pgr ,\Ygr )$ we obtain from this 
and \thref{l:errorImplication}(b) that 
$$
\Pc(P ,\rv ) _{av}(\dv)= \Pc(\Pgr ,\rv' )_{av}(\dv)(1+O (\mu_1  ))
$$ 
for all balanced   $\dv \in \Omega^{(4)}$  and all $av\in \cA$.   
Next applying \thref{l:errorImplication}(c) in the same way to balanced $\dv \in \Omega^{(6)}$ gives 
$$
\Yc(\hat\pv ,Y) _{avb}(\dv)= \Yc(\hat\pv' , \Ygr )_{avb}(\dv)(1+O (\mu_1  ))
$$ 
for all balanced $\dv \in \Omega^{(6)}$, and all  $(a,v,b)\in \oA_2$ with $a\in S$,   
where $\hat\pv = \Pc(P ,\rv )$ and $\hat\pv' = \Pc(\Pgr ,\rv' )$. 
 Recalling the definition of $\Cc(\pv,\yv)$ from the beginning of this proof, we note that 
  the  three conclusions above imply that, for $\cC(P ,Y )=(P_1,Y_1)$ and $\cC(\Pgr ,\Ygr )=(P_1^*,Y_1^*)$, we have $P_1^*(\dv)=P_1(\dv)(1+O(\mu_1))$ and  $Y_1^*(\dv)=Y_1(\dv)(1+O(\mu_1))$ for all $\dv \in \Omega^{(6)}$. Similarly, making $k-1$ iterated applications of the three parts of \thref{l:errorImplication} with ever-decreasing $\xi$ produces
$$
P_k^*(\dv)=P_k(\dv)(1+O(\mu_1)^k) \mbox{and} Y_k^*(\dv)=Y_k(\dv)(1+O(\mu_1)^k)
$$
for all $\dv \in \Omega^{(2k+2)}$,  where $P_k$, $P_k^*$ etc.\ are defined analogously for $\cC^{k}$. 
In the same way, applying  \thref{l:mapleHigher}(a), (b) and (c) in turn,   recalling Lemma~\ref{l:errorImplication} to handle the small error terms, shows that  $P_1^*(\dv)=P^*(\dv)(1+O(\mu \eps^4))$ and  $Y_1^*(\dv)=Y^*(\dv)(1+O(\mu \eps^4))$ for all $\dv \in \Omega^{(6)}$.
 Using the three parts of \thref{l:errorImplication} repeatedly, and bounding the total distance moved during the iterations as for a contraction mapping  (as the sum of a geometric series), this gives
$$
P_k^*(\dv)=P^*(\dv)(1+O(\mu \eps^4))   \mbox{ and  }  Y_k^*(\dv)=Y^*(\dv)(1+O(\mu \eps^4))
$$
for all $\dv \in \Omega^{(2k+2)}$. Using the last two conclusions with $k:= k_0 = 4 \log n$ and the fact that $P_k = P$ (as $\Cc$ fixes $(P,Y)$) gives~\eqn{appP} for all balanced $\dv\in \Omega^{(r-2)}$ since
   we may assume that the  $ O( \mu_1)$ term is at most $1/e$ say. (Recall that $\mu_1=5\mu < 5\mu_0$ which we may choose to be sufficiently small.)
Note that $\D\se  \Omega^{(r)}\subseteq \Omega^{(r-2)}$ by definition.  
 For~\eqn{appR}, we now use that~\eqn{appP} holds for all balanced $\dv\in \Omega^{(r-2)}$ to deduce from \thref{l:errorImplication}(a) that 
$\Rc(P,Y)_{ab}(\dv) = \Rc(\Pgr ,\Ygr )_{ab}(\dv) (1+O(\mu \eps^4))$ for all $S$-heavy $\dv\in \Omega^{(r-1)}$. 
This, together with (a) above and the fact that $\Rc(P,Y)=R$,   
implies~\eqn{appR} for all $\dv\in Q_1^S \subseteq   \Omega^{(r-1)}$.
\end{proof} 

Moving on to Step 2 of the template, we now make suitable definitions of probability spaces $\cS$ and $\cS'$ in preparation for using \thref{l:lemmaX}. Let $\Omega$ be the underlying set of 
$\cB_m(\ell,n)$ in the bipartite case and of $\vec \cB_m(n)$ in the digraph case, that is, the set of all degree sequences $\dv=(\sv,\tv)\in \Z^{\ell+n}$ such that $M_1(\sv)=M_1(\tv)=m$ (and $\ell =n$ in the digraph case). Let $\cS'=\cD(\G(\ell,n,m))$ in the bipartite case, and $\cS'=\cD(\vec \G(n,m))$ in the digraph case. 
Set 
$$
\W=\{\dv=(\sv,\tv)\in \D : 
 \sigma^2(\sv) \le  2s,\ \sigma^2(\tv) \le 2t,\left|\delta^{\di}\sigma(\sv,\tv)\right|<\xi s  \}
$$
where $\xi= \max\{ \log^2\ell/\sqrt{\ell},\log^2n/\sqrt{n}\}$.

Define the graph $G$ on vertex set $\D$ by joining two degree sequences by an edge if they are of the form $\dv-\ea$ and $\dv-\eb$ for some $\dv\in Q_1^S$ and $a,b\in S$, or for some $\dv\in Q_1^T$ and $a,b\in T$. 
We note at this point that the diameter of $G$ is $r=O(\ell s^{\degspread}+nt^{\degspread})=O(\eps\mu n\ell)$ since the constant sequence $(d,\ldots,d)$ is an element of $\D$ and by the degree constraints for $\dv\in \D$. The same bound holds for the diameter of $G[\W]$. 
By definition of $R$ in~\eqref{realRatio} and its approximation in~\eqref{appR}  we   have,   for adjacent vertices/sequences in this graph, that 
\begin{align}\lab{targetLHSnew}
\frac{\Pr_{\cS'}(\dv-\ea)}{\Pr_{\cS'}(\dv-\eb)}
&= R_{ab}(\dv) = 
R^*_{ab}(\dv) \left(1+ O\left(\mu \eps^4\right)\right). 
\end{align}

We now define the ideal probability space $\cS$ (see Step 3 of the template) on $\Omega$. Recall the definition of $\correct(\dv)$ in~\eqref{corrH}, which is slightly different in the bipartite case and the digraph case (due to $\mu$ being defined slightly differently and the extra term in the exponent in the digraph case). Also recall  (from just before~\eqref{H}) that we use $\apx(\dv)$ to denote the product $\Pr_{\cB_m}(\dv)\correct(\dv)$ on the right hand side of~\eqref{enumFormula}, where $\cB_m = \cB_m(\ell,n)$ in the bipartite case and $\cB_m=\vec\cB_m(n)$ in the digraph case. 
Then set 
$$
\Pr_{\cS}(\dv) = \apx(\dv)/\sum_{\dv'\in\W}\apx(\dv')
	= \frac{\apx(\dv)}{\ex_{\cB_m}(\mathbbm{1}_{\W}\correct)} 
$$
for $\dv\in \W$ and $\Pr_{\cS}(\dv)=0$ otherwise. 

Following the template to Step 4 we next estimate the probability of $\W$ in the two probability spaces $\cS$ and $\cS'$ in both cases, bipartite and digraph. We simultaneously evaluate $\Pr_{\cB_m}(\W)$ and  $\ex_{\cB_m}(\mathbbm{1}_{\W}\correct)$  for later use.  First note that $\Pr_{\cS}(\W)=1$ by definition. Next, $M_1(\sv)=M_1(\tv)=m$ for all $\dv=(\sv,\tv)\in\Omega$, by definition. Let $\bar n =\min\{n,\ell\}.$ For   the following, let  $\dv$  be chosen  in either of $\cD(\G)=\cS'$ and $\cB_m$, in either of  the bipartite and the digraph case. Then $|s_a-\savg|\le \savg^{\degspread}$ and $|t_v-\tavg|\le \tavg^{\degspread}$ for all $a\in S$ and $v\in T$ with probability at least $1-O({\bar n}^{-\omega})$
by \thref{l:sigmaConcBip}(i) and since  $s> (\log \ell)^K$ and $t> (\log n)^K$  for all $K>0$. It follows that $\Pr_{\cD(\G)}(\D) = \Pr_{\cS'}(\D) = 1-O(\bar n^{-\omega})$ and 
$\Pr_{\cB_m}(\D) = 1-O(\bar n^{-\omega})$. 
  Next, apply  \thref{l:sigmaConcBip}(ii) with $\alpha =\xi/2$ for $\dv=(\sv,\tv)$ (noting that $m\ge s+t\gg\log^3 \ell+ \log^3 n$ and $(\log^3 n+\log^3 \ell)/m=o(\xi^2)$ by definition of $\xi$)  to deduce that 
$\sigma^2(\sv)=s(1-\mu)(1\pm \xi)$, $\sigma^2(\tv)=t(1-\mu)(1\pm \xi)$ and $\sigma(\sv,\tv)=O(\xi s)$ with probability $1-O(\bar n^{-\omega})$. 
This implies in particular that $\Pr (\W) = 1-O(\bar n^{-\omega})$ in both $\cS'$ and $\cB_m$. 
Thus $\Pr(\W)\ge 1-\eps_0$ in both $\cS$ and $\cS'$ in both cases, bipartite and digraph, for, say, $\eps_0=1/\bar n$.

 Before moving on to Step 5 of the template we use these concentration results to estimate $\ex_{\cB_m}(\mathbbm{1}_{\W}\correct)$. If $\sigma^2(\tv)=t(1-\mu)(1+O(\xi))$ then the term $\sigma^2(\tv)/t(1-\mu)$ in the exponent of $\correct(\dv)$ is $1+O(\xi)$. Similarly for the term $\sigma^2(\sv)/s(1-\mu)$. Furthermore, in the digraph case, if $\sigma(\sv,\tv)=O(\xi s)$ then the term $\delta^{\di}\sigma(\sv,\tv)/s(1-\mu)$ in $\correct(\dv)$ is $O(\xi)$. It follows, using the strong concentration results in the previous paragraph, that 
\bel{HConcDense}
\correct(\dv)= 1+O(\xi) \text{ with probability } 1-O(\bar n^{-\omega}) \text{ for } \dv\in\cB_m.
\ee
Furthermore, by definition of $\W$ it follows that $\correct(\dv) = \Theta(1)$ for all $\dv\in \W$, using the fact that $\mu < 1/2,$ say. This and~\eqref{HConcDense} then imply that $\ex_{\cB_m}(\mathbbm{1}_{\W}\correct)=1+O(\xi+\bar n^{-\omega})=1+O(\xi).$

We now move to Step 5 in the template. 
For  $\dv=(\sv,\tv)\in Q_1^S$ and $a,b\in S$, 
\begin{align}\label{eq:ratioDense}
\frac{\apx(\dv-\ea)}{\apx(\dv-\eb)} &=
\frac{\apx(\sv-\ea,\tv)}{\apx(\sv-\eb,\tv)}
=  \frac{s_a(n+\delta^{\rm bi} -s_b)}{s_b(n+\delta^{\rm bi}-s_a)}
 \exp\bigg(\frac{s_b-s_a}{s(1-\mu)\ell }
 \left(1-\frac{\sigma^2(\tv)}{t(1-\mu)}\right)
 +\frac{ \delta^{\rm di} (t_{a'}-t_{b'})}{t(1-\mu)n}\bigg),  
\end{align}
by~\eqref{H}. 
Compare the expression on the right hand side with $R^*=\rho$ at the beginning of this section (after a straight-forward reparameterisation) to see that 
\bel{targetRHS}
 \frac{\apx(\dv-\ea)}{\apx(\dv-\eb)} 
= R^*_{ab}(\dv)  \left(1+ O\left( 1/\bar n^2\right)\right),  
\ee
where the error is due to the fact that we use $e^x = 1+x+O(x^2)$ and that $\mu(\dv) = \mu +O(1/n\ell)$ for $\dv\in Q_1^S$.   
This together with
\eqref{targetLHSnew} gives
\bel{ratioInD}
\frac{\Pr_{\cS'}(\dv-\ea)}{\Pr_{\cS'}(\dv-\eb)} =e^{O(\mu\eps^4)} \frac{\apx(\dv-\ea)}{\apx(\dv-\eb)} 
\ee
for $\dv\in Q_1^S$ and $a,b\in S$. 
The same applies for $\dv\in Q_1^T$ and $a,b\in T$ by symmetry. 
 Note that when both $\dv-\ea$ and $\dv-\eb$ are elements of $\W$ then the right hand side is in fact equal to 
$ e^{O(\mu\eps^4)}\Pr_{\cS}(\dv-\ea)/\Pr_{\cS}(\dv-\eb),$ by definition of $\Pr_{\cS}$ above. 
Therefore, by \thref{l:lemmaX} 
\begin{align*}
\Pr_{\cS'}(\dv) &= \Pr_{\cS}(\dv) e^{O\left( r\mu\eps^4+\eps_0 \right)}\\
&= \Pr_{\cB_m}(\dv)\correct(\dv)\left(1+O\left(\xi + r\mu\eps^4+\eps_0\right)\right) 
\end{align*} 
for all $\dv\in \W$,  where we use that $\ex_{\cB_m} (\mathbbm{1}_{\W}\correct) = 1+O(\xi)$. 
Now let $\dv\in \D\sm\W$. Then there is some sequence $\dv'\in \W$ such that the distance of $\dv$ and $\dv'$ in $G$ is at most $2r$. Any two adjacent sequences $\tilde\dv-\ea$ and $\tilde\dv-\eb$ along that path satisfy~\eqref{ratioInD}, so that by telescoping we see that 
\begin{align*}
\Pr_{\cS'}(\dv) &= H(\dv) \left(1+O\left(\xi + r\mu\eps^4+\eps_0\right)\right)
\end{align*} 
for such $\dv$ as well. This proves the claim, 
since $\xi = \max\{ \log^2\ell/\sqrt{\ell},\log^2n/\sqrt{n}\}$, $\eps_0=1/n$, and 
$r\mu\eps^4 = O\left(\eps^5\mu^2n\ell\right)$.
\end{proof}

\begin{proof}[Proof of \thref{t:bipmodel}]
In the sparse case, given that the set $\D$ in \thref{t:sparseCaseBip} is nonempty, we define a set $\W$ in the proof of \thref{t:sparseCaseBip} and show  that $\Pr_{\cS'}(\W) =  1-o(n^{-\omega})$ just before~\eqn{HConc}, where $\cS'= \cD(\vec \G(n,m))$  or $\cD(\G(\ell,n,m))$ as the case may be, and also that $\Pr_{\cB_m}(\W) =  1-o(n^{-\omega})$. 
As observed in that proof, the formula given in \thref{t:sparseCaseBip} holds for all $\dv\in\W$, and also we can assume that $\corr{\dv}\sim 1$ by~\eqn{HConc}. This gives the required a.q.e.\ property, for any triple $(\ell,n,m)$ such that the set $\D$ in that theorem is nonempty (for $n$ sufficiently large).
The proof of \thref{t:mainbip} implies a similar result, for any  $(\ell,n,m)$ satisfying that theorem's hypotheses. So all that is left to do is check which ordered triples $(\ell, n, m)$ are covered by these results,   and to supply the remaining cases using previously known results. 

We first concentrate on the bipartite case~\ref{model-b}. We claim that the conditions in \ref{model-b-ii} imply that the hypotheses for \thref{t:mainbip} are satisfied. Note first that $n^3 = o(\ell^2m^{1-\eps})$ implies in particular that $\ell\to\infty$ with $n$, using the trivial $m\le n\ell$. The same asymptotic inequality, together with $\ell\le n$, implies that $n\ell^{\eps}\le m$. The required bounds on $m$ in \thref{t:mainbip} then follow from $n\ell^{\eps}\le m<\mu_0 n\ell$ by choosing, say, $\degspread = 1/2+\eps/10.$
Next, we check that the conditions in \ref{model-b-iii} imply that the hypotheses for \thref{t:sparseCaseBip} are satisfied and that there exists some non-empty set $\D$ as in the theorem statement. Given $\ell,n$ and $m$, let $\dv=(\sv,\tv)$ be a sequence such that $M_1(\sv)=M_1(\tv)=m$, all elements of $\sv$ are equal to $\lfloor m/\ell\rfloor$ or $\lceil m/\ell\rceil$ and all elements of $\tv$ are equal to $\lfloor m/n\rfloor$ or $\lceil m/n\rceil$, respectively. 
Then the inequality 
$$ 
\Delta_S^3\Delta_T^3(n\ell)^{\eps/2} 
\le \left(\frac{m}{\ell}+1\right)^3 \left(\frac{m}{n}+1\right)^3m^{\eps} \ll \min\{sm,tm\}
$$ 
can be seen to follow from $m^{4+\eps}=o(n^2\ell^2\min\{\ell,n\}).$  
Thus, for $n$ sufficiently large, the set $\D$ in \thref{t:sparseCaseBip} can be assumed to be non-empty and for \ref{model-b-iii}, the requirements of \thref{t:sparseCaseBip} are satisfied.
For $\log^3\ell+ \log ^3n\ll m \ll \min\{n/\log^2n,\ell/\log^2\ell\}$, \thref{l:sigmaConcBip} shows that with probability $1-n^{-\omega}-\ell^{-\omega}$ all vertices in $\G(\ell,n,m)$ have bounded degrees, so the main theorem of  Bender~\cite{b1974} applies. \thref{l:sigmaConcBip}  also shows that the terms $\sigma^2(\sv)$ and $\sigma^2(\tv)$ are  concentrated near their  expected values, to the extent that $\corr{\dv} =1+o(1)$ with probability $1-n^{-\omega}-\ell^{-\omega}$. It is then a simple calculation to check that Bender's  formula corresponds asymptotically with the first formula in~\eqn{probbin}, noting  the adjustment required to count graphs (as in Remark~\ref{r:first}). This finishes the proof of part (b).

For the digraph case~\ref{model-a}, \thref{t:mainbip,t:sparseCaseBip} also supply the required statement when $\ell=n$. Here \thref{t:mainbip} covers the range $n^{1+\eps}<m<\mu_0 n^2$ for any $\eps>0$ and sufficiently small constant $\mu_0$, and \thref{t:sparseCaseBip} covers the range $n/\log^3n < m < n^{5/4-\eps},$ say. Only the very dense and very sparse remain. McKay and Skerman~\cite[Theorem 1(d)]{MS}  immediately covers $\min\{m, n(n-1)-m) > n^2/c\log n$  for any  $c>0$. Larger values of $m$ than this are covered by complementation of the results for smaller values. On the other hand, for $\log ^3n\ll m= \ll n/\log^2n$, we  may again use  \thref{l:sigmaConcBip} and the main theorem of~\cite{b1974}. This works almost exactly the same as for the bipartite graph case. 
\end{proof}

\begin{proof}[Proof of \thref{t:edgeprobability}]
The stated formula is a by-product of the proof of \thref{t:mainbip}, so here we just point to the relevant spots within that proof. 
Recall the definition of $P^\ast$, which is our approximation to the edge probability, at the beginning of Section~\ref{s:denseBip}, and note that a parameterisation yields the formula given in the statement of \thref{t:edgeprobability}. 
\thref{RDenseBip} then yields the desired approximation since $\mu\eps^4=O(\min\{s,t\}^{4\degspread-4}m/n\ell)$.
\end{proof}


\begin{thebibliography}{10}

\bibitem{b2010}
A.~Barvinok.
\newblock On the number of matrices and a random matrix with prescribed row and
  column sums and 0--1 entries.
\newblock {\em Advances in Mathematics} 224 (2010), 316--339.

\bibitem{bbk1972}
A.~B{\'e}k{\'e}ssy, P.~B\'ek\'essy, and J.~Koml{\'o}s.
\newblock Asymptotic enumeration of regular matrices.
\newblock {\em Stud.~Sci.~Math.~Hungar.} 7 (1972), 343--353.

\bibitem{b1974}
E.A.~Bender.
\newblock The asymptotic number of non-negative integer matrices with given row
  and column sums.
\newblock {\em Discrete Mathematics} 10 (1974), 217--223.

\bibitem{bm1986}
B.~Bollob{\'a}s and B.D. McKay.
\newblock The number of matchings in random regular graphs and bipartite
  graphs.
\newblock {\em Journal of Combinatorial Theory, Series B} 41 (1986), 80--91.

\bibitem{CGM}
E.R.~Canfield, C.~Greenhill, and B.D.~McKay.
\newblock Asymptotic enumeration of dense 0--1 matrices with specified line
  sums.
\newblock {\em Journal of Combinatorial Theory, Series A} 115 (2008), 32--66.

\bibitem{CM}
E.R.~Canfield and B.D.~McKay.
\newblock Asymptotic enumeration of dense 0-1 matrices with equal row sums and
  equal column sums.
\newblock {\em The Electronic Journal of Combinatorics} 12 (2005), R29.

\bibitem{es1971}
C.J.~Everett and P.R.~Stein.
\newblock The asymptotic number of integer stochastic matrices.
\newblock {\em Discrete Mathematics} 1 (1971), 55--72.

\bibitem{gc1977}
I.J.~Good and J.F.~Crook.
\newblock The enumeration of arrays and a generalization related to contingency
  tables.
\newblock {\em Discrete Mathematics} 19 (1977), 23--45.

\bibitem{GM}
C.~Greenhill and B.D.~McKay.
\newblock Random dense bipartite graphs and directed graphs with specified
  degrees.
\newblock {\em Random Structures \& Algorithms} 35 (2009), 222--249.

\bibitem{GMW}
C.~Greenhill, B.D.~McKay, and X.~Wang.
\newblock Asymptotic enumeration of sparse 0--1 matrices with irregular row and
  column sums.
\newblock {\em Journal of Combinatorial Theory, Series A} 113 (2006), 291--324.

\bibitem{lw2018}
A.~Liebenau and N.~Wormald.
\newblock Asymptotic enumeration of graphs by degree sequence, and the degree
  sequence of a random graph.
\newblock {\em arXiv preprint arXiv:1702.08373}  (2017).

\bibitem{McD}
C.~McDiarmid.
\newblock On the method of bounded differences.
\newblock {\em Surveys in {C}ombinatorics} 141 (1989), 148--188.

\bibitem{m1984}
B.D.~McKay.
\newblock Asymptotics for 0-1 matrices with prescribed line sums.
\newblock {\em Enumeration and Design,(Academic Press, 1984)}  (1984),
  225--238.

\bibitem{MS}
B.D.~McKay and F.~Skerman.
\newblock Degree sequences of random digraphs and bipartite graphs.
\newblock {\em Journal of Combinatorics} 7 (2016), 21--49.

\bibitem{MWX}
B.D.~McKay and X.~Wang.
\newblock Asymptotic enumeration of 0--1 matrices with equal row sums and equal
  column sums.
\newblock {\em Linear algebra and its applications} 373 (2003), 273--287.

\bibitem{degseq1}
B.D.~McKay and N.C.~Wormald.
\newblock The degree sequence of a random graph.~{I.~The} models.
\newblock {\em Random Structures and Algorithms} 11 (1997), 97--117.

\bibitem{mp1976}
M.P.~Mineev and A.I.~Pavlov.
\newblock On the number of $(0,1)$--matrices with prescribed sums of rows and
  columns.
\newblock {\em Doc. Akad. Nauk SSSR} 230 (1976), 1276--1282.

\bibitem{M}
L.~Mirsky.
\newblock {\em Transversal theory}, volume 197.
\newblock Academic Press, New York, 1971.

\bibitem{o1969}
P.E.~O'Neil.
\newblock Asymptotics and random matrices with row-sum and column
  sum-restrictions.
\newblock {\em Bulletin of the American Mathematical Society} 75 (1969),
  1276--1282.

\bibitem{read1958}
R.C.~Read.
\newblock {\em Some {Enumeration Problems in Graph Theory}}.
\newblock PhD thesis, University of London, 1958.

\bibitem{wormald1980}
N.C.~Wormald.
\newblock Some problems in the enumeration of labelled graphs.
\newblock {\em Bulletin of the Australian Mathematical Society} 21 (1980),
  159--160.

\end{thebibliography}
 \end{document}